\documentclass[reqno]{amsart}
\usepackage{amssymb}
\usepackage{graphicx}

\usepackage[usenames, dvipsnames]{color}
\usepackage{verbatim}
\usepackage{mathrsfs}
\usepackage{bm}
\usepackage{cite}

\numberwithin{equation}{section}

\newtheorem{theorem}{Theorem}[section]
\newtheorem{corollary}[theorem]{Corollary}
\newtheorem{lemma}[theorem]{Lemma}
\newtheorem{prop}[theorem]{Proposition}

\theoremstyle{definition}
\newtheorem{remark}[theorem]{Remark}

\theoremstyle{definition}

\theoremstyle{definition}

\makeatletter
\def\dashint{\operatorname%
{\,\,\text{\bf-}\kern-.98em\DOTSI\intop\ilimits@\!\!}}
\makeatother

\def\\det{\text{\det}}

\def\.5{\frac{1}{2}}

\newcommand{\RN}[1]{%
  \textup{\uppercase\expandafter{\romannumeral#1}}%
}

\renewcommand{\epsilon}{\varepsilon}

\newcounter{marnote}

\begin{document}

\title[The convexity of inclusions and gradient's concentration]{The convexity of inclusions and gradient's concentration for Lam\'{e} systems with partially infinite coefficients}
\author[Y.Y. Hou]{Yuanyuan Hou}
\address[Y.Y. Hou]{School of Mathematical Sciences, Beijing Normal University, Laboratory of Mathematics and Complex Systems, Ministry of Education, Beijing 100875, China.}
\email{yyhou@mail.bnu.edu.cn}

\author[H.J. Ju]{Hongjie Ju}
\address[H.J. Ju]{School of  Sciences, Beijing University of Posts  and Telecommunications,
Beijing 100876, China}
\email{hjju@bupt.edu.cn}
\thanks{H.J. Ju was partially supported by NSFC (11471050).}

\author[H.G. Li]{Haigang Li}
\address[H.G. Li]{School of Mathematical Sciences, Beijing Normal University, Laboratory of MathematiCs and Complex Systems, Ministry of Education, Beijing 100875, China.}
\email{hgli@bnu.edu.cn. Corresponding author.}
\thanks{H.G. Li was partially supported by  NSFC (11571042, 11631002), Fok Ying Tung Education Foundation (151003).}


\date{\today} 


\maketitle
\begin{abstract}
It is interesting to study the stress concentration between two adjacent stiff inclusions in composite materials, which can be modeled by the Lam\'{e} system with partially infinite coefficients. To overcome the difficulty from the lack of maximum principle for elliptic systems, we use the energy method and an iteration technique to study the gradient estimates of the solution. We first find a novel phenomenon that the gradient will not blow up any more once these two adjacent inclusions fail to be locally relatively strictly convex, namely, the top and bottom boundaries of the narrow region are partially ``flat". This is contrary to our expectation. In order to further explore the blow-up mechanism of the gradient, we next investigate two adjacent inclusions with relative convexity of order $m$ and finally reveal an underlying relationship between the blow-up rate of the stress and the order of the relative convexity of the subdomains in all dimensions.
\end{abstract}

\section{Introduction and main results}

The convexity plays a central role in many questions in analysis. The purpose of this paper is mainly to investigate the significant role of the relative convexity between two adjacent inclusions in the blow-up analysis of the stress in high-contrast fiber-reinforced composite materials, where the inclusions are frequently spaced very closely and even touching. This work is motivated by issue of material failure initiation, where it is well known that high concentration phenomenon of mechanical loads in the extreme loads will be amplified by the composite microstructure, for example, the narrow region between two adjacent inclusions. However, in this paper we first find a novel phenomenon, contrary to our expectation. Whenever the narrow region has certain partially ``flat" top and bottom boundaries (see Figure \ref{fig1}), we prove that the gradient of the solutions to Lam\'{e} systems with partially infinite coefficients, is bounded by some positive constant, independent of the distance between the inclusions, rather than blows up as one might expect. In order to further explore the blow-up mechanism of the stress, we next investigate two adjacent inclusions with relative convexity of order $m$, and finally reveal an underlying relationship between the blow-up rate of the stress and the order of the relative convexity of the subdomains in all dimensions.  This shows that the relative convexity between inclusions is critical for the stress concentration phenomenon in composite materials.

For strictly convex inclusions, especially for circular inclusions, there have been many important works on the gradient estimates for solution to a class of divergence form elliptic equations and systems with discontinuous coefficients, arising from the study of composite media. For two adjacent disks in dimension two with $\varepsilon$ apart, Keller \cite{k1} was the first to use analysis to estimate the effective properties of particle reinforced composites. In \cite{basl}, Babu\v{s}ka, Andersson, Smith, and Levin numerically analyzed the initiation and growth of damage in composite materials, where the Lam\'e system is assumed. Bonnetier and Vogelius \cite{bv} and Li and Vogelius \cite{lv} proved the uniform boundedness of $|\nabla{u}|$ regardless of $\varepsilon$ provided that the coefficients stay away from $0$ and $\infty$.
Li and Nirenberg \cite{ln} extended the results in \cite{lv} to general divergence form second order elliptic systems including systems of linear elasticity.

\begin{figure}[t]
\begin{minipage}[c]{0.9\linewidth}
\centering
\includegraphics[width=1.8in]{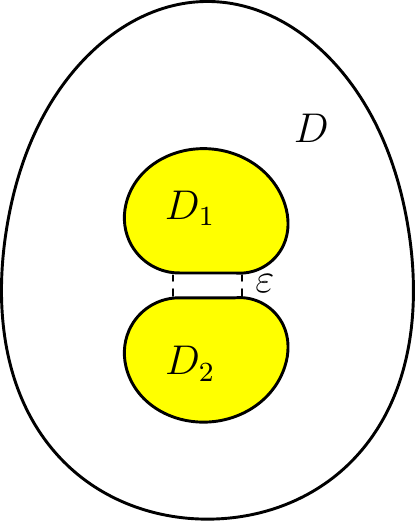}
\caption{\small Two adjacent inclusions with partially ``flat" boundaries.}
\label{fig1}
\end{minipage}
\end{figure}

On the other hand, in order to investigate the high-contrast conductivity problem and establish the relationship between $|\nabla u|$ and the distance $\varepsilon$, Ammari, Kang, and Lim \cite{akl} studied two close-to-touching  disks whose conductivity degenerate to $\infty$ or $0$, a lower bound on $|\nabla u|$ was constructed there showing blow-up of order $\varepsilon^{-1/2}$ in dimension two. Subsequently, it has been proved by many mathematicians that the generic blow-up rate of $|\nabla{u}|$ is $\varepsilon^{-1/2}$ in dimension $d=2$, $|\varepsilon\log\varepsilon|^{-1}$ in dimension $d=3$, and $\varepsilon^{-1}$ in dimensions $d\geq4$. See Ammari, Kang, Lee, Lee and Lim \cite{aklll}, Bao, Li and Yin \cite{bly1,bly2}, as well as Lim and Yun \cite{ly,ly2}, Yun \cite{y1,y2,y3}, Lim and Yu \cite{lyu}. The corresponding boundary estimates when one inclusion close to the boundary was established in \cite{aklll,LX}. Further, more detailed, characterizations of the singular behavior of gradient of $u$ have been obtained by Ammari, Ciraolo, Kang, Lee and Yun \cite{ackly}, Ammari, Kang, Lee, Lim and Zribi \cite{akllz}, Bonnetier and Triki \cite{bt1,bt2}, Gorb and Novikov \cite{gn} and Kang, Lim and Yun \cite{kly,kly2}. However, for the linear elasticity case, because of the lack of maximum principle, an essential tool to deal with the scalar case, there is no progress until Bao, Li and Li's work \cite{bll1, bll2}. They developed an iteration technique with respect to the energy estimate and obtained the pointwise upper bound of the gradient of solution to the Lam\'{e} system with partially infinite coefficients, and showed the same blow-up rate as the scalar case. The boundary estimates was studied in \cite{bjl} by Bao, Ju and Li. Recently, Kang and Yu \cite{ky} by using the layer potential techniques and the singular functions obtained a lower bound of the gradient of solution in dimension two showing that the blow-up rate obtained in \cite{bll1} is optimal. For more related work on elliptic and parabolic equations and systems from composites, see \cite{adkl,bc,dongli,dongzhang,gb1,gb2,kly0,kyun,kl,ll,llby,m} and the references therein.

As we have mentioned before, in all of the above known work, the strict convexity of the inclusions (or at least the strictly relative convexity of two adjacent inclusions) is assumed. Interestingly, when the inclusions are only convex but not strictly convex (see Figure \ref{fig1}), we prove that $|\nabla u|$ is uniformly bounded with respect to $\varepsilon$ once the area of the flat boundaries is positive, which implies that blow-up will not occur any more. To the best of our knowledge, this is a new phenomenon in the blow-up analysis of fiber-reinforced composite.  The corresponding result for perfect conductivity problem may refer to \cite{jlx}.

To describe the problem and results, we first fix our domain and notations. Let $D$ be a bounded open set in $\mathbb R^{d}$ $(d\geq2)$ that contains a pair of (touching) subdomains $D_{1}^{0}$ and $D_{2}^{0}$  with $C^{2,\alpha}$ $(0<\alpha<1)$ boundaries and far away from $\partial D$. We assume that $D_{1}^{0}$ and $D_{2}^{0}$ are convex but not strictly convex, and have a common flat boundary $\Sigma'$, such that
$$\partial D_{1}^{0}\cap\partial D_{2}^{0}=\Sigma'\subset\mathbb R^{d-1},$$
and
$$D_{1}^{0}\subset\{(x',x_{d})\in\mathbb R^{d}| x_{d}>0\},\quad D_{2}^{0}\subset\{(x',x_{d})\in\mathbb R^{d}| x_{d}<0\}.$$
Here we use superscript prime to denote the $(d-1)$-dimensional domains and variables, such as $\Sigma'$ and $x'$.
We also assume that $\Sigma'$ is a bounded convex domain in $\mathbb R^{d-1}$, which  can contain an $(d-1)$-dimensional ball. We set the center of the mass of $\Sigma'$ to be the origin. We also assume that  the $C^{2,\alpha}$ norms of $\partial{D}_{1}^{0}$, $\partial{D}_{2}^{0}$ and  $\partial{D}$ are bounded by some positive constant. By translating $D_{1}^{0}$ by a positive number $\varepsilon$ along the positive direction of $x_{d}$-axis, while $D_{2}^{0}$ is fixed, we obtain $D_{1}^{\varepsilon}$, that is,
$$D_{1}^{\varepsilon}:=D_{1}^{0}+(0',\varepsilon).$$
When there is no possibility of confusion, we drop superscripts and denote
$$D_{1}:=D_{1}^{\varepsilon},\qquad D_{2}:=D_{2}^{0}.$$
Set
$$\Sigma:=\Sigma'\times(0,\varepsilon),\quad \mbox{and}\quad~\Omega:=D\setminus\overline{D_1\cup D_2}.$$

We assume that $\Omega$ and $D_{1}\cup D_{2}$ are occupied, respectively, by two different isotropic and homogeneous materials with different Lam\'{e} constants $(\lambda, \mu)$ and $(\lambda_1, \mu_1)$. Then the elasticity tensors for the background and the inclusion can be written, respectively, as $\mathbb{C}^0$ and $\mathbb{C}^1$, with
$$C_{ijkl}^0=\lambda\delta_{ij}\delta_{kl} +\mu(\delta_{ik}\delta_{jl}+\delta_{il}\delta_{jk}),$$
and
$$C_{ijkl}^1=\lambda_1\delta_{ij}\delta_{kl} +\mu_1(\delta_{ik}\delta_{jl}+\delta_{il}\delta_{jk}),$$
where $i, j, k, l=1,2,\cdots,d$ and $\delta_{ij}$ is the kronecker symbol: $\delta_{ij}=0$ for $i\neq j$, $\delta_{ij}=1$ for $i=j$.

Let $u=(u^1, u^2,\cdots,u^{d})^T:D\rightarrow\mathbb{R}^{d}$ denote the displacement field. For a given vector valued function $\varphi=(\varphi^1,\varphi^2,\cdots,\varphi^{d})^{T}$, we consider the following Dirichlet problem for the Lam\'{e} system:
 \begin{align}\label{Lame}
\begin{cases}
\nabla\cdot \left((\chi_{\Omega}\mathbb{C}^0+\chi_{D_{1}\cup{D}_{2}}\mathbb{C}^1)e(u)\right)=0,&\hbox{in}~~D,\\
u=\varphi, &\hbox{on}~~\partial{D},
\end{cases}
\end{align}
where $\chi_{\Omega}$ is the characteristic function of $\Omega\subset \mathbb{R}^{d}$,
$$e(u)=\frac{1}{2}(\nabla u+(\nabla u)^T)$$
is the strain tensor. Assume that the standard ellipticity condition holds for (\ref{Lame}), that is,
\begin{align*}
\mu>0,\quad d\lambda+2\mu>0,\quad \mu_1>0,\quad d\lambda_1+2\mu_1>0.
\end{align*}
For $\varphi\in H^1(D; \mathbb{R}^{d})$, it is well known that there exists a unique solution $u\in H^1(D; \mathbb{R}^{d})$ to the Dirichlet problem (\ref{Lame}), which is also the minimizer of the energy functional
$$J_1[u]:=\frac{1}{2}\int_\Omega \left((\chi_{\Omega}\mathbb{C}^0+\chi_{D_{1}\cup{D}_{2}}\mathbb{C}^1)e(u), e(u)\right)dx $$
on
\begin{align*}
H^1_\varphi(D; \mathbb{R}^{d}):=\left\{u\in  H^1(D; \mathbb{R}^{d})~\big|~ u-\varphi\in  H^1_0(D; \mathbb{R}^{d})\right\}.
\end{align*}

Introduce the linear space of rigid displacement in $\mathbb{R}^{d}$:
$$\Psi:=\{\psi\in C^1(\mathbb{R}^{d}; \mathbb{R}^{d})\ |\ \nabla\psi+(\nabla\psi)^T=0\}.$$
Using $e_{1},\cdots,e_{d}$ to denote the standard basis of $\mathbb{R}^{d}$, then $$\left\{~e_{i},~x_{j}e_{k}-x_{k}e_{j}~\big|~1\leq\,i\leq\,d,~1\leq\,j<k\leq\,d~\right\}$$ is a basis of $\Psi$. Denote this basis of $\Psi$ as $\left\{\psi_{\alpha}~|~\alpha=1,2,\cdots,\frac{d(d+1)}{2}\right\}$. For fixed $\lambda$ and $\mu$ satisfying $\mu>0$ and $d\lambda+2\mu>0$, denote $u_{\lambda_1,\mu_1}$ as the solution of (\ref{Lame}). Then similarly as in the Appendix of \cite{bll1}, we also have
\begin{align*}
u_{\lambda_1,\mu_1}\rightarrow u\quad\hbox{in}\ H^1(D; \mathbb{R}^{d}),\quad \hbox{as}\ \min\{\mu_1, d\lambda_1+2\mu_1\}\rightarrow\infty,
\end{align*}
where $u$ is a $H^1(D; \mathbb{R}^{d})$ solution of
 \begin{align}\label{maineqn}
\begin{cases}
\mathcal{L}_{\lambda, \mu}u:=\nabla\cdot(\mathbb{C}^0e(u))=0,\quad&\hbox{in}\ \Omega,\\
u|_{+}=u|_{-},&\hbox{on}\ \partial{D}_{i},i=1,2,\\
e(u)=0,&\hbox{in}~~D_{i},i=1,2,\\
\int_{\partial{D}_{i}}\frac{\partial u}{\partial \nu_0}\Big|_{+}\cdot\psi_{\alpha}=0,&i=1,2,\alpha=1,2,\cdots,\frac{d(d+1)}{2},\\
u=\varphi,&\hbox{on}\ \partial{D},
\end{cases}
\end{align}
where $\varphi\in C^{2}(\partial{D}; \mathbb{R}^{d})$ is a given function,
\begin{align*}
\left(\mathcal{L_{\lambda, \mu}}u\right)_k=\mu\Delta u^{k}+(\lambda+\mu)\partial_{x_k}(\nabla\cdot u),\quad k=1, \cdots, d,
\end{align*}
and
\begin{align*}
\frac{\partial u}{\partial \nu_0}\Big|_{+}&:=(\mathbb{C}^0e(u))\vec{n}=\lambda(\nabla\cdot u)\vec{n}+\mu(\nabla u+(\nabla u)^T)\vec{n},
\end{align*}
and $\vec{n}$ is the unit outer normal of $D_{i}$, $i=1,2$. Here and throughout this paper the subscript $\pm$ indicates the limit from outside and inside the domain, respectively. The existence, uniqueness and regularity of weak solutions to (\ref{maineqn}) can be proved by the argument with a minor modification on assumption on the subdomain in the Appendix of \cite{bll1}. In particular, the $H^1$ weak solution to (\ref{maineqn}) is in $C^1(\overline{\Omega};\mathbb{R}^{d})\cap C^1(\overline{D}_{1}\cup\overline{D}_{2};\mathbb{R}^{d})$. The solution is also the unique function which has the least energy in appropriate functional spaces, characterized by
$$I_\infty[u]=\min_{v\in\mathcal{A}}I_\infty[v],\qquad\,I_\infty[v]:=\frac{1}{2}\int_{\Omega}(\mathbb{C}^0e(v), e(v))dx,$$
where
\begin{equation*}\label{def_A}
\mathcal{A}:=\left\{v\in H^1_\varphi(D;\mathbb{R}^{d}) ~\Big|~ e(v)=0\ \ \hbox{in}~~D_{1}\cup{D}_{2}\right\}.
\end{equation*}

Now we further assume that there exists a constant $R$, independent of $\varepsilon$, such that $B'_{2R}\supset\Sigma'$ and the top and bottom boundaries of the narrow region between $\partial{D}_{1}$ and $\partial{D}_{2}$ can be represented as follows. The corresponding partial boundaries of $\partial D_{1}$ and $\partial D_{2}$ are, respectively,
\begin{align}\label{h1h2'}x_{d}=\varepsilon+h_{1}(x')\quad\mbox{and}\quad x_{d}=h_{2}(x'),\quad\mbox{for}~x'\in B'_{2R},\end{align}
with
\begin{equation}\label{h1h2}
h_{1}(x')=h_{2}(x')\equiv0,\quad\mbox{for}~~x'\in\Sigma'.
\end{equation}
Moreover, in view of the assumptions of $\partial D_1$ and $\partial D_2$,  $h_{1}$ and $h_{2}$ satisfy
\begin{equation}\label{h1-h2}
\varepsilon+h_{1}(x')>h_{2}(x'),\quad\mbox{for}~~x'\in B'_{2R}\setminus\overline{\Sigma'},
\end{equation}
\begin{equation}\label{h1h20}
\nabla_{x'}h_{1}(x')=\nabla_{x'}h_{2}(x')=0,\quad\mbox{for}~~x'\in\partial\Sigma',
\end{equation}
\begin{equation}\label{h1-h21}
\nabla^{2}_{x'}(h_{1}-h_{2})(x')\geq\kappa_{0}I_{d-1},\quad\mbox{for}~~x'\in B'_{2R}\setminus\overline{\Sigma'},
\end{equation}
and
\begin{equation}\label{h1h3}
\|h_{1}\|_{C^{2,\alpha}(B'_{2R})}+\|h_{2}\|_{C^{2,\alpha}(B'_{2R})}\leq{\kappa_1},
\end{equation}
where $\kappa_{0},\ \kappa_1$ are positive constants, $I_{d-1}$ is the $(d-1)\times(d-1)$ identity matrix. Set the narrow region between $\partial{D}_{1}$ and $\partial{D}_{2}$ as
\begin{equation*}
\Omega_r:=\left\{(x',x_{d})\in \mathbb{R}^{d}~\big|~h_{2}(x')<x_{d}<\varepsilon+h_{1}(x'),~|x'|<r\right\}, \quad\mathrm{for}\ 0<r\leq\,2R.
\end{equation*}
We assume that for some $\delta_0>0$,
\begin{equation}\label{delta}
\delta_0\leq \mu, d\lambda+2\mu\leq\frac{1}{\delta_0}.
\end{equation}

Throughout the paper, unless otherwise stated, we use $C$ to denote some positive constant, whose values may vary from line to line, depending only on $d, \kappa_0, \kappa_1, R$, $\delta_0$, and an upper bound of the $C^{2,\alpha}$ norms of $\partial D_{1}, \partial D_{2}$ and $\partial D$, but not on $\varepsilon$. We call a constant having such dependence a {\it universal constant}.
Under the assumptions as above, we have the following gradient estimates in all dimensions. In order to present our idea clearly, with particular emphasis on the fact that $|\Sigma'|>0$ implies the boundedness of $|\nabla u|$, our first main result is restricted in dimension $d=2$.

\begin{theorem}\label{thm1}
Suppose that $D_{1}, D_{2}\subset{D}\subset\mathbb{R}^{2}$ be defined as above and  (\ref{h1h2'})--(\ref{h1h3}) hold. Let
$u\in{H}^{1}(D;\mathbb{R}^{2})\cap{C}^{1}(\overline{\Omega};\mathbb{R}^{2})$
be the solution to \eqref{maineqn}. Then for $x\in\Omega_{R}$, 
\begin{align}\label{thm1-1}
&|\nabla{u}(x_{1},x_{2})|\nonumber\\
&\leq C\left(\frac{\varepsilon}{|\Sigma'|+\sqrt{\varepsilon}}\frac{1}{\varepsilon+dist^2(x_{1},\Sigma')}+\frac{\varepsilon}{|\Sigma'|^{3}+\varepsilon}\frac{|x_{1}|}{\varepsilon+dist^2(x_{1},\Sigma')}\right)\|\varphi\|_{C^{2}(\partial D)},
\end{align}
and
\begin{align}\label{thm1-2}
\|\nabla{u}\|_{L^{\infty}(\Omega\setminus\Omega_{R})}\leq C\|\varphi\|_{C^{2}(\partial D)}.
\end{align}
\end{theorem}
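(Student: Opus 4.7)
The plan is to adapt the energy-decomposition and iteration scheme of Bao--Li--Li \cite{bll1} to the present geometry (two convex but not strictly convex inclusions sharing a flat contact region $\Sigma'$), tracking quantitatively how $|\Sigma'|$ enters each step. Since $e(u)=0$ inside each $D_i$, the restriction $u|_{D_i}$ is a rigid displacement $\sum_{\alpha=1}^3 C_i^\alpha \psi_\alpha$. I would introduce auxiliary fields $v_i^\alpha\in C^{1,\alpha}(\overline{\Omega};\mathbb{R}^2)$ solving $\mathcal{L}_{\lambda,\mu}v_i^\alpha = 0$ in $\Omega$ with $v_i^\alpha=\psi_\alpha$ on $\partial D_i$ and $0$ on $\partial D_{3-i}\cup\partial D$, plus a background field $v_0$ carrying $\varphi$, so that $u=\sum_{i,\alpha}C_i^\alpha v_i^\alpha + v_0$ in $\Omega$. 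The theorem then reduces to (i) pointwise estimates on $|\nabla v_1^\alpha|$ in $\Omega_R$, and (ii) quantitative bounds on the differences $C_1^\alpha - C_2^\alpha$ produced by the free-boundary (balance) conditions in \eqref{maineqn}.

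For (i), set $\delta(x'):=\varepsilon+h_1(x')-h_2(x')$; by \eqref{h1h20}--\eqref{h1-h21}, $\delta(x')=\varepsilon$ on $\Sigma'$ and $\delta(x') \gtrsim \varepsilon + \kappa_0 \text{dist}^2(x',\Sigma')$ outside. I would construct an explicit competitor $\bar v_1^\alpha$ by linearly interpolating the boundary data of $\psi_\alpha$ across the narrow strip in the $x_d$ direction and let $w_1^\alpha = v_1^\alpha - \bar v_1^\alpha$. Running the iteration of \cite{bll1} on rescaled cylinders $Q_{\delta(z')}(z')$ yields $\int_{Q_{\delta(z')}(z')}|\nabla v_1^\alpha|^2 \lesssim 1$; combined with interior $L^\infty$ bounds for Lam\'e systems this gives
$$|\nabla v_1^\alpha(x)| \lesssim \frac{1}{\delta(x')} \lesssim \frac{1}{\varepsilon + \text{dist}^2(x_1,\Sigma')},\qquad \alpha=1,2,$$
while for the rotational mode $\psi_3 = x_1 e_2 - x_2 e_1$ the vanishing of the boundary data at $x_1 = 0$ improves this to $|\nabla v_1^3(x)| \lesssim |x_1|/\delta(x')$.

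For (ii), inserting the decomposition into $\int_{\partial D_1}\partial_{\nu_0}u|_+\cdot\psi_\beta=0$ and integrating by parts produces a $3\times 3$ linear system $A(\vec C_1-\vec C_2)=\vec b$ with $A_{\alpha\beta}=\int_\Omega(\mathbb{C}^0 e(v_1^\alpha),e(v_1^\beta))\,dx$ and $|\vec b|\lesssim\|\varphi\|_{C^2(\partial D)}$. The diagonal is decisive. For $\alpha\in\{1,2\}$ the integrand reduces (in the narrow strip) essentially to $|\nabla v_1^\alpha|^2$, so
$$A_{\alpha\alpha}\sim \int_{\Sigma'}\frac{dx_1}{\varepsilon} + \int_{(-R,R)\setminus\Sigma'}\frac{dx_1}{\varepsilon+\text{dist}^2(x_1,\Sigma')} \sim \frac{|\Sigma'|+\sqrt{\varepsilon}}{\varepsilon},$$
while the weight $x_1^2$ from the rotational mode yields
$$A_{33}\sim \int_{\Sigma'}\frac{x_1^2\,dx_1}{\varepsilon} + \int_{(-R,R)\setminus\Sigma'}\frac{x_1^2\,dx_1}{\varepsilon+x_1^2} \sim \frac{|\Sigma'|^3+\varepsilon}{\varepsilon}.$$
Showing that the off-diagonal entries are of strictly lower order makes $A$ invertible with the expected asymptotics, delivering $|C_1^\alpha-C_2^\alpha|\lesssim\varepsilon/(|\Sigma'|+\sqrt{\varepsilon})$ for $\alpha=1,2$ and $|C_1^3-C_2^3|\lesssim\varepsilon/(|\Sigma'|^3+\varepsilon)$. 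Multiplying by the bounds of step (i) and summing gives \eqref{thm1-1}; the estimate \eqref{thm1-2} away from $\Omega_R$ is standard, since $\delta \gtrsim 1$ there, via interior and boundary Schauder estimates for Lam\'e systems.

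The main technical obstacle is executing the iteration in (i) uniformly across three distinct regimes: the flat region $\Sigma'$ where $\delta\equiv\varepsilon$; the strictly convex region where $\delta \sim \varepsilon + \text{dist}^2$; and the transition across $\partial\Sigma'$. The compatibility condition \eqref{h1h20} that $\nabla_{x'}h_i \equiv 0$ on $\partial\Sigma'$ is exactly what ensures $\delta$ is $C^1$ across $\partial\Sigma'$, so the competitor $\bar v_1^\alpha$ and its error $w_1^\alpha$ can be rescaled uniformly on both sides. A secondary challenge is verifying the lower-order behavior of the off-diagonal entries of $A$ with sufficient precision, and ensuring that the correction $w_1^3$ does not destroy the $|x_1|$ factor in the rotational estimate; this typically calls for a symmetry-based subtraction argument before iterating.
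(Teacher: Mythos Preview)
Your proposal is correct and follows essentially the same route as the paper: the same decomposition $u=\sum_{i,\alpha}C_i^\alpha v_i^\alpha+v_3$, the same competitor-plus-iteration scheme (the paper's Theorem~\ref{thm2.1}) yielding the pointwise bounds on $\nabla v_i^\alpha$, and the same linear system $a_{11}(C_1-C_2)=p$ with the diagonal asymptotics you wrote down (Lemma~\ref{lem_a11} and Proposition~\ref{prop1}). One small point to tighten: the balance conditions give $a_{11}C_1+a_{21}C_2=b_1$, not directly $A(C_1-C_2)=\vec b$; the paper absorbs $(a_{11}+a_{21})C_2$ into the right side by first proving $|\nabla(v_1^\alpha+v_2^\alpha)|\le C$ (Lemma~\ref{lemma23}) and $|C_i^\alpha|\le C$, and the off-diagonal control requires some care (Steps~3--5 of the proof of Lemma~\ref{lem_a11}), but these are exactly the ``technical obstacles'' you already flagged.
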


\begin{remark}
From \eqref{thm1-1}, we can conclude that
\begin{itemize}
\item [(a)] If $|\Sigma'|>0$, where $|\Sigma'|$ denotes the area of $\Sigma'$, then for sufficiently small $\varepsilon$ (say, $<|\Sigma'|$), $|\nabla u|$ will be bounded in $\Omega_{R}$, that is,
$$|\nabla{u}(x_{1},x_{2})|\leq\,C\|\varphi\|_{C^{2}(\partial D)},\quad\mbox{for}~x\in\Omega_{R}.
$$
This implies that there is no blow-up to occur whenever $|\Sigma'|>0$. It means that this flat microstructure can not cause stress concentration. For smoothness of the linear laminates can refer to \cite{ckv,dong}.

\item [(b)] If $\Sigma'=\{0'\}$, then $|\Sigma'|=0$, \eqref{thm1-1} becomes, for $x\in\Omega_{R}$,
$$|\nabla{u}(x_{1},x_{2})|\leq C\left(\frac{\sqrt{\varepsilon}}{\varepsilon+|x_{1}|^{2}}+\frac{|x_{1}|}{\varepsilon+|x_{1}|^2}\right)\|\varphi\|_{C^{2}(\partial D)}\leq \frac{C}{\sqrt{\varepsilon}+|x_{1}|}\|\varphi\|_{C^{2}(\partial D)},
$$
which is exactly the main result in \cite{bll1} and shows that
this upper bound attain its maximum on the shortest segment $\overline{P_{1}P_{2}}$
$$|\nabla{u}(0,x_{2})|\leq \frac{C}{\sqrt{\varepsilon}}\|\varphi\|_{C^{2}(\partial D)},\quad\forall~-\varepsilon<x_{d}<\varepsilon.
$$
\end{itemize}
\end{remark}

Next, we extend Theorem \ref{thm1} to higher dimension $d\geq3$. In order to emphasize the role of $|\Sigma'|$ in the blow-up analysis and avoid the complicated calculation, we assume that $\Sigma'=B'_{R_{0}}(0')$, for some $R_{0}<R$. Actually, if we assume that $\Sigma'$ is symmetric about each $x_i$, $i=1,\cdots, d-1$, then a long remark is given after the proof of Theorem \ref{thmd}. More general cases are left to the interested readers.

\begin{theorem}\label{thmd}
Suppose that $D_{1}, D_{2}\subset{D}\subset\mathbb{R}^{d}$, $d\geq3$, be defined as above and  (\ref{h1h2'})--(\ref{h1h3}) hold. Let
$u\in{H}^{1}(D;\mathbb{R}^{d})\cap{C}^{1}(\overline{\Omega};\mathbb{R}^{d})$
be the solution to \eqref{maineqn}. Then
For $d\geq3$, under the additional assumption that $\Sigma'$ is symmetric about $x_i$ respectively, $i=1,\cdots, d-1$, then for $x\in\Omega_{R}$, if $d=3$,
\begin{align}\label{thmd-1}
&|\nabla{u}(x',x_{3})|\nonumber\\
&\leq C\left(\frac{\varepsilon}{|\Sigma'|+\varepsilon|\log\varepsilon|}\frac{1}{\varepsilon+dist^2(x',\Sigma')}+\frac{\varepsilon}{|\Sigma'|^{2}+\varepsilon}\frac{|x'|}{\varepsilon+dist^2(x',\Sigma')}\right)\|\varphi\|_{C^{2}(\partial D)},
\end{align}
if $d\geq4$,
\begin{align}\label{thmd-2}
&|\nabla{u}(x',x_{d})|\nonumber\\
&\leq C\left(\frac{\varepsilon}{|\Sigma'|+\varepsilon}\frac{1}{\varepsilon+dist^2(x',\Sigma')}+\frac{\varepsilon}{|\Sigma'|^{\frac{d+1}{d-1}}+\varepsilon}\frac{|x'|}{\varepsilon+dist^2(x',\Sigma')}\right)\|\varphi\|_{C^{2}(\partial D)};
\end{align}
and
\begin{align}\label{thmd-3}
\|\nabla{u}\|_{L^{\infty}(\Omega\setminus\Omega_{R})}\leq C\|\varphi\|_{C^{2}(\partial D)}.
\end{align}
\end{theorem}

Comparing boundedness of $|\nabla u|$ in Theorem \ref{thm1} and \ref{thmd} with the blow-up results in \cite{bll1,bll2} where we assume the relative convexity between inclusions is of order $2$, and the blow-up rate is proved to be, respectively, $\frac{1}{\sqrt{\varepsilon}}$ in dimension $d=2$, $\frac{1}{\varepsilon|\log\varepsilon|}$ in dimension $d=3$, and $\frac{1}{\varepsilon}$ in higher dimensions $d\geq4$, a nature question is raised as follows: what does exactly determine the blow-up rate?

In order to further explore the blow-up mechanism of the gradient and answer this question, we using the following example to reveal the relationship between the blow-up rate of the stress and the order of the relative convexity of the two adjacent inclusions. Under the same assumptions on $h_{1}$ and $h_{2}$ as before except for the flatness condition \eqref{h1h20} and \eqref{h1-h21}, we assume that the relative convexity between $D_{1}$ and $D_{2}$ is of order $m$, $m\geq2$, namely,
\begin{equation}\label{h1-h2m}
(h_{1}-h_{2})(x')=\kappa_{0}|x'|^m,~m\geq2,\quad\mbox{for}~~x'\in B'_{2R},
\end{equation}
and
\begin{equation}\label{h1h2m}
|\nabla h_i(x')|\leq C|x'|^{m-1}, \quad\,|\nabla^2 h_i(x')|\leq C|x'|^{m-2},~i=1,2,\quad\mbox{for}~~x'\in B'_{2R}.
\end{equation}
This example gives an essentially complete answer to the above question.

\begin{theorem}\label{thm3}
Suppose that $D_{1}, D_{2}\subset{D}\subset\mathbb{R}^{d}$ $(d\geq2)$ be defined as above, satisfying \eqref{h1h2'}--\eqref{h1-h2}, (\ref{h1-h2m})--(\ref{h1h2m}) and \eqref{h1h3}. Let
$u\in{H}^{1}(D;\mathbb{R}^{d})\cap{C}^{1}(\overline{\Omega};\mathbb{R}^{d})$
be the solution to \eqref{maineqn}. Then  we have
\begin{align*}
\|\nabla{u}\|_{L^{\infty}(\Omega)}\leq\,C
\begin{cases}
\varepsilon^{-1}\|\varphi\|_{C^{2}(\partial D)},&\mbox{if}~2\leq m<d-1,\\
\frac{1}{\varepsilon|\log\varepsilon|}\|\varphi\|_{C^{2}(\partial D)},&\mbox{if}~m=d-1,\\
\frac{1}{\varepsilon^{min\{1-\frac{1}{m},\frac{d-1}{m}\}}}\|\varphi\|_{C^{2}(\partial D)},&\mbox{if}~d-1<m<d+1,\\
\frac{1}{|\log\varepsilon|\varepsilon^{1-\frac{1}{m}}}\|\varphi\|_{C^{2}(\partial D)},&\mbox{if}~m=d+1,\\
\varepsilon^{-\frac{d}{m}}\|\varphi\|_{C^{2}(\partial D)},&\mbox{if}~m>d+1.
\end{cases}
\end{align*}
\end{theorem}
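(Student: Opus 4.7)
The plan is to adapt the decomposition--iteration framework of Bao--Li--Li \cite{bll1, bll2}, tracking the thickness profile $\delta(x'):=\varepsilon+h_1(x')-h_2(x')=\varepsilon+\kappa_0|x'|^m$ through every step. First I will decompose the solution as $u=\sum_{i=1}^2\sum_\alpha C_i^\alpha v_i^\alpha+v_0$, where $v_i^\alpha$ solves the homogeneous Lam\'{e} system in $\Omega$ with boundary data $\psi_\alpha$ on $\partial D_i$ and zero on $\partial D_{3-i}\cup\partial D$, while $v_0$ carries the Dirichlet data $\varphi$ on $\partial D$. The scalar coefficients $C_i^\alpha$ are uniquely determined by the rigid-motion flux-free conditions $\int_{\partial D_i}(\partial_{\nu_0}u)\cdot\psi_\alpha=0$. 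The proof then reduces to (i) sharp pointwise gradient bounds for each $v_i^\alpha$ in the narrow region, and (ii) size bounds on each jump $|C_1^\alpha-C_2^\alpha|$.

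For (i), I will introduce the explicit linear interpolation $\bar v_1^\alpha(x',x_d):=\psi_\alpha(x)\,(x_d-h_2(x'))/\delta(x')$ and study the remainder $w_1^\alpha:=v_1^\alpha-\bar v_1^\alpha$ by a Caccioppoli--Campanato iteration over a dyadic cover of the narrow region, in the spirit of Proposition~2.1 of \cite{bll1}. The hypotheses $|\nabla h_i|\leq C|x'|^{m-1}$ and $|\nabla^2 h_i|\leq C|x'|^{m-2}$ control all the error terms produced by the iteration and yield
\[
|\nabla v_1^\alpha(x)|\leq\frac{C}{\delta(x')}\ \text{for translations }\psi_\alpha=e_k,\qquad |\nabla v_1^\alpha(x)|\leq\frac{C|x'|}{\delta(x')}\ \text{for }e_d\text{-involving rotations.}
\]
The translation bound attains its maximum $C/\varepsilon$ at $x'=0$, while the rotational ratio $|x'|/\delta(x')$ peaks at the transition radius $|x'|\sim\varepsilon^{1/m}$ with value $C\varepsilon^{1/m-1}$; this difference in peak locations is the geometric origin of the two competing exponents $(d-1)/m$ and $1-1/m$ appearing in the statement.

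For (ii), testing the Lam\'{e} equation against rigid motions on each $\partial D_i$ produces a linear system $\mathbb{A}(\vec C_1-\vec C_2)=\vec b$, whose diagonal entries reduce, via polar coordinates in $x'$, to
\[
\int_0^R\frac{r^{d-2}}{\varepsilon+r^m}\,dr\quad(\text{translation}),\qquad \int_0^R\frac{r^{d}}{\varepsilon+r^m}\,dr\quad(\text{rotation}),
\]
with the right-hand side $\vec b$ controlled by $\|\varphi\|_{C^2(\partial D)}$. Elementary computation shows the first integral is of order $1$, $|\log\varepsilon|$, or $\varepsilon^{(d-1)/m-1}$ according as $m<d-1$, $m=d-1$, or $m>d-1$, and the second is of order $1$, $|\log\varepsilon|$, or $\varepsilon^{(d+1)/m-1}$ according as $m<d+1$, $m=d+1$, or $m>d+1$. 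After verifying that the off-diagonal entries of $\mathbb{A}$ are dominated by the diagonal, inversion delivers the size of each $|C_1^\alpha-C_2^\alpha|$ in each of the five regimes.

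Combining (i) and (ii) and taking the supremum of $\sum_\alpha |C_1^\alpha-C_2^\alpha|\,|\nabla v_1^\alpha(x)|$ over $x\in\Omega_R$ produces the piecewise bound of the theorem, while the estimate outside $\Omega_R$ follows from standard interior elliptic regularity applied to $u-\sum C_i^\alpha\psi_\alpha$. The main technical obstacle I foresee is in step (ii): at the borderline values $m\in\{d-1,d,d+1\}$ the off-diagonal couplings in $\mathbb{A}$ can reach the same order as certain diagonal entries, so the inversion of $\mathbb{A}$ and the corresponding balance between translation-type and rotation-type contributions have to be checked case by case in order to recover the clean five-case bound displayed in the statement.
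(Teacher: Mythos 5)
Your proposal follows essentially the same route as the paper's proof: the same decomposition into the $v_i^{\alpha}$ plus a boundary part, the same linear interpolant $\bar v$ with an energy-iteration bound on the remainder (Theorem~\ref{thm6.1} and Corollary~\ref{mcorol3.4}), and the same linear system for the coefficient jumps, whose diagonal entries reduce to exactly the two radial integrals you wrote down (Lemma~\ref{lemma6.1}). The only point to make explicit is that for $2\le m<d+1$ the rotational diagonal entries are merely $O(1)$, so no smallness of $|C_1^{\alpha}-C_2^{\alpha}|$ for $\alpha>d$ can be extracted and the matrix is inverted blockwise via Lemma~\ref{lemma3.3} rather than by diagonal dominance; one then uses only the boundedness of these jumps together with the extra factor $|x'|$ in the rotational gradient bound (which, note, holds for all rotations $\alpha=d+1,\dots,\frac{d(d+1)}{2}$, not just those involving $e_d$), exactly as your ``two competing peaks'' observation anticipates.
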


\begin{remark}
Actually, we have the following pointwise upper bounds for $x\in\Omega_{R}$,
\begin{align}\label{thm3-1}
|\nabla{u}(x)|&\leq \,C
\begin{cases}
\frac{1}{\varepsilon+|x'|^m}\|\varphi\|_{C^{2}(\partial D)},&\mbox{if}~2\leq m<d-1,\\
\left(\frac{1}{|\log\varepsilon|(\varepsilon+|x'|^m)}+\frac{|x'|}{\varepsilon+|x'|^m}+1\right)\|\varphi\|_{C^2(\partial D)},~~&\mbox{if}~m=d-1,\\\left(\frac{\varepsilon^{1-\frac{d-1}{m}}}{\varepsilon+|x'|^m}+\frac{|x'|}{\varepsilon+|x'|^m}+1\right)\|\varphi\|_{C^2(\partial D)},~~&\mbox{if}~d-1<m<d+1,\\
\left(\frac{\varepsilon^{1-\frac{d-1}{m}}}{\varepsilon+|x'|^m}+\frac{|x'|}{|\log\varepsilon|(\varepsilon+|x'|^m)}+1\right)\|\varphi\|_{C^2(\partial D)},~~&\mbox{if}~m=d+1,\\
\left(\frac{\varepsilon^{1-\frac{d-1}{m}}}{\varepsilon+|x'|^m}+\frac{\varepsilon^{1-\frac{d+1}{m}}|x'|}{\varepsilon+|x'|^m}+1\right)\|\varphi\|_{C^2(\partial D)},~~&\mbox{if}~m> d+1.
\end{cases}
\end{align}
\end{remark}

\begin{remark}
We now draw some conclusions from \eqref{thm3-1} in order:
\begin{itemize}
\item [(a)] When $m=2$, from the first three lines in \eqref{thm3-1}, we can find the maximum attain in the shortest segment between $\partial{D_{1}}$ and $\partial{D}_{2}$, $\overline{P_{1}P_{2}}$, moreover the blow-up rate are $\varepsilon^{-1/2}$ if $d=2$, $(\varepsilon|\log\varepsilon|)^{-1}$ if $d=3$, and $\varepsilon^{-1}$ if $d\geq4$, respectively, as obtained in \cite{bll1,bll2}.

\item[(b)] From the second line in \eqref{thm3-1}, $|\log\varepsilon|$ shows that $d=m+1$ is a critical dimension with respect to the convexity. That is the reason why the blow-up rate is $(\varepsilon|\log\varepsilon|)^{-1}$ if $d=3$ and $m=2$.

\item[(c)] The last line in \eqref{thm3-1} is an important improvement of theorem 5.1 in \cite{bll1}, where we only can see the blow-up rate of $|\nabla u|$ is $\varepsilon^{\frac{1}{m}-1}$, which tends to $\varepsilon^{-1}$ as $m\rightarrow+\infty$. However, from last line in \eqref{thm3-1}, we can obtain the blow-up rate is $\varepsilon^{-\frac{m}{d}}$, which tends to $1$ as $m\rightarrow+\infty$. This is exactly the reason why $|\nabla u|$ is bounded in Theorem \ref{thm1} and \ref{thmd} whenever $|\Sigma'|>0$. 

This improvement is due to our estimates of $|C_{1}^{\alpha}-C_{2}^{\alpha}|\leq\,C\varepsilon^{1-\frac{d+1}{m}}$, $\alpha=d+1,\cdots,\frac{d(d+1)}{2}$, for $m>d+1$. For more details, see Proposition \ref{mprop2}.

\item[(d)] Moreover, for any fixed dimension $d$, if $m\geq d+1$ then we will find that the maximum of the upper bounds will attain at $|x'|^{1/m}$, not at the origin $x'=0'$ any more. Thus, as $m$ increases, it is easy to see that
$|x'|^{1/m}$ will be more and more away from the origin and goes to $1$ as $m\rightarrow+\infty$.  This means that the stress concentration may diffuse as the relative convexity between $D_{1}$ and $D_{1}$ is weakened when $m$ increases. 
\end{itemize}

All in all, we finally reveal the important role of the relative convexity between $D_{1}$ and $D_{2}$ playing in the concentration mechanism of the stress. These results may be valuable to make a composite material.
\end{remark}

The rest of this paper is organized as follows. In Section \ref{sec1}, we first give some elementary properties for the Lam\'{e} system and a decomposition of the solution to \eqref{maineqn}, and then establish the gradient estimates for a general boundary value problem. For the sake of readability and presentation, with particular emphasis on the fact that the flatness between $\partial{D}_{1}$ and $\partial{D}_{2}$ leads to the boundedness of $|\nabla u|$, we restrict ourselves in dimension $d=2$ in Section \ref{sec_d=2} and give the proof of Theorem \ref{thm1}. For higher dimensions $d\geq3$, the proof of Theorem \ref{thmd} is given in Section \ref{sec_highd}. To investigate the relationship between the blow-up rate of the stress and the order of the relative convexity of $D_{1}$ and $D_{2}$, we prove Theorem \ref{thm3} in Section \ref{sec_m}. In the Appendix, we make use of the iteration technique developed in \cite{bll1,bll2} to give sketches of the proofs of Theorem \ref{thm2.1} and \ref{thm6.1} for general Dirichlet boundary problem \eqref{eq1.1} with different assumptions on $\partial{D}_{1}$ and $\partial{D}_{2}$.

\section{Preliminary}\label{sec1}

In this section, we begin by recalling some basic properties of the tensor $\mathbb{C}$ in Subsection \ref{subsec_2.1}, then decompose the solution $u$ into several $v_{i}^{\alpha}$ in Subsection \ref{sec_thm1}, which are solutions of a class of Dirichlet boundary value problems. At the same time, a family of free constant $C_{i}^{\alpha}$ are introduced. Thus, the proof of the main theorem is reduced to the estimates of $|\nabla v_{i}^{\alpha}|$ and $C_{i}^{\alpha}$. For the sake of simplicity, we consider a general boundary value problem in subsection \ref{subsec_general_bvp} to obtain the estimates of $|\nabla v_{i}^{\alpha}|$ in various cases in a unified way. Thus, the rest sections of this paper can be mainly devoted to the estimates of $C_{i}^{\alpha}$.  

\subsection{Properties of the tensor $\mathbb{C}$}\label{subsec_2.1}

We first recall some properties of the tensor $\mathbb{C}$, mainly from book \cite{osy} of Oleinik, Shamaev, and Yosifian. For the isotropic elastic material, let
$$\mathbb{C}:=(C_{ijkl})=(\lambda\delta_{ij}\delta_{kl}+\mu(\delta_{ik}\delta_{jl}+\delta_{il}\delta_{jk})),\quad \mu>0,\quad d\lambda+2\mu>0.$$
The components $C_{ijkl}$ satisfy the following symmetry condition:
\begin{align}\label{symm}
C_{ijkl}=C_{klij}=C_{klji},\quad i,j,k,l=1,2,\cdots, d.
\end{align}
We will use the following notations:
\begin{align*}
(\mathbb{C}A)_{ij}=\sum_{k,l=1}^dC_{ijkl}A_{kl},\quad\hbox{and}\quad(A,B)\equiv A:B=\sum_{i,j=1}^dA_{ij}B_{ij},
\end{align*}
for every pair of $d\times d$ matrices $A=(A_{ij})$, $B=(B_{ij})$. Clearly,
$$(\mathbb{C}A, B)=(A, \mathbb{C}B).$$
If $A$ is symmetric, then, by the symmetry condition (\ref{symm}), we have that
$$(\mathbb{C}A, A)=C_{ijkl}A_{kl}A_{ij}=\lambda A_{ii}A_{kk}+2\mu A_{kj}A_{kj}.$$
Thus $\mathbb{C}$ satisfies the following ellipticity condition: For every $d\times d$ real symmetric matrix $\eta=(\eta_{ij})$,
\begin{align}\label{ellip}
\min\{2\mu, d\lambda+2\mu\}|\eta|^2\leq(\mathbb{C}\eta, \eta)\leq\max\{2\mu, d\lambda+2\mu\}|\eta|^2,
\end{align}
where $|\eta|^2=\sum_{ij}\eta_{ij}^2.$ In particular,
\begin{align}\label{2.15}
\min\{2\mu, d\lambda+2\mu\}|A+A^T|^2\leq(\mathbb{C}(A+A^T), (A+A^T)).
\end{align}

It is well known that for any open set $O$ and $u, v\in C^2(O;\mathbb{R}^{d})$,
\begin{align}\label{eu}
\int_O(\mathbb{C}^0e(u), e(v))dx=-\int_O\left(\mathcal{L}_{\lambda, \mu}u\right)\cdot v+\int_{\partial O}\frac{\partial u}{\partial \nu_0}\Big|_{+}\cdot v.
\end{align}

\subsection{Decomposition of $u$}\label{sec_thm1}

As in \cite{bll1,bll2}, we decompose the solution $u(x)$ of \eqref{maineqn} as follows
\begin{equation}\label{decom_u}
u(x)=\sum_{\alpha=1}^{\frac{d(d+1)}{2}}C_1^{\alpha}v_{1}^{\alpha}(x)+\sum_{\alpha=1}^{\frac{d(d+1)}{2}}C_2^{\alpha}v_2^{\alpha}(x)+v_{3}(x),\qquad~x\in\,\Omega ,
\end{equation}
where $v_{i}^{\alpha}\in{C}^{2}(\Omega;R^d)$, $i=1,2$, $\alpha=1,2,\cdots,\frac{d(d+1)}{2}$, and $v_{3}$, respectively, satisfying
\begin{equation}\label{equ_v1}
\begin{cases}
\mathcal{L}_{\lambda,\mu}v_{i}^{\alpha}=0,&\mathrm{in}~\Omega,\\
v_{i}^{\alpha}=\psi_{\alpha},&\mathrm{on}~\partial{D}_{i},~i=1,2,\\
v_{i}^{\alpha}=0,&\mathrm{on}~\partial{D_{j}}\cup\partial{D},~j\neq i,
\end{cases}
\end{equation}
and
\begin{equation}\label{equ_v3}
\begin{cases}
\mathcal{L}_{\lambda,\mu}v_{3}=0,&\mathrm{in}~\Omega,\\
v_{3}=0,&\mathrm{on}~\partial{D}_{1}\cup\partial{D_{2}},\\
v_{3}=\varphi,&\mathrm{on}~\partial{D}.
\end{cases}
\end{equation}
Then by \eqref{decom_u}, we have
\begin{align}\label{decom_nabla_u}
\nabla{u}(x)=&\sum_{\alpha=1}^{\frac{d(d+1)}{2}}C_{1}^\alpha\nabla{v}_{1}^\alpha(x)+\sum_{\alpha=1}^{\frac{d(d+1)}{2}}C_{2}^\alpha\nabla{v}_{2}^\alpha(x)+\nabla{v}_{3}(x)\nonumber\\
=&\sum_{\alpha=1}^{\frac{d(d+1)}{2}}(C_{1}^\alpha-C_{2}^\alpha)\nabla{v}_{1}^\alpha(x)+\sum_{\alpha=1}^{\frac{d(d+1)}{2}}C_{2}^\alpha\nabla({v}_{1}^\alpha+{v}_{2}^\alpha)(x)+\nabla{v}_{3}(x),\qquad~x\in\,\Omega.
\end{align}
Thus, the proofs of our theorems are reduced to the establishment of the following two kinds of estimates:

(i) Estimates of $|\nabla v_{i}^{\alpha}|$, $i=1,2$, $\alpha=1,\cdots,\frac{d(d+1)}{2}$, and $|\nabla v_{3}|$;

(ii) Estimates of $|C_i^{\alpha}|$, $i=1,2$, $\alpha=1,\cdots,\frac{d(d+1)}{2}$, and $|C_{1}^{\alpha}-C_{2}^{\alpha}|$, $\alpha=1,\cdots,\frac{d(d+1)}{2}$.

We notice that decomposition \eqref{decom_nabla_u} is a little different with that in \cite{bll1,bll2}. Here we also need to estimate the differences of $|C_{1}^{\alpha}-C_{2}^{\alpha}|$, $\alpha=d+1,\cdots,\frac{d(d+1)}{2}$, which is new and important part for our main results. These two kinds of estimates are connected. Estimates (ii), especially that of $|C_{1}^{\alpha}-C_{2}^{\alpha}|$, heavily depend on how good estimates we can obtain for Estimates (i).

\subsection{A general boundary value problem}\label{subsec_general_bvp}

First, by theorem 1.1 in \cite{llby}, we know that $|\nabla v_{3}|$ is bounded. 
Because $v_{i}^{\alpha}$ is the unique solution to the Dirichlet boundary problem on $\Omega$, in order to obtain the estimates of $|\nabla v_{i}^{\alpha}|$ under a unified framework, we consider the following general Dirichlet boundary value problem:
\begin{align}\label{eq1.1}
\begin{cases}
\mathcal{L}_{\lambda,\mu}v:=\nabla\cdot(\mathbb{C}^0e(v))=0,\quad&
\hbox{in}\  \Omega,  \\
v=\psi(x),\quad &\hbox{on}\ \partial{D}_{1},\\
v=0, \quad&\hbox{on} \ \partial{D}_{2}\cup\partial{D},
\end{cases}
\end{align}
where $ \psi(x)=(\psi^1(x), \psi^2(x),\cdots,\psi^{d}(x))^{T}\in C^2(\partial{D}_{1}; \mathbb{R}^{d})$ is a given vector-valued function. Thus, if we have obtained estimate of $|\nabla v|$, then taking $\psi=\psi_{\alpha}$, $\alpha=1,2,\cdots,\frac{d(d+1)}{2}$, respectively, we can obtain that of $|\nabla v_{1}^{\alpha}|$ immediately. By the same way, we can also have that of $|\nabla v_{2}^{\alpha}|$.

Making use of the idea in \cite{bll1,bll2}, we decompose $v$ as follows:
\begin{align*}
v=v_{1}+v_2+\cdots+v_{d},
\end{align*}
where $v_l=(v_l^1, v_l^2,\cdots,v_{l}^{d})^{T}$, $l=1,2,\cdots,d$, with  $v_l^j=0$ for $j\neq l$, and $v_{l}$ satisfy the following boundary value problem, respectively,
\begin{align}\label{eq_v2.1}
\begin{cases}
  \mathcal{L}_{\lambda,\mu}v_{l}:=\nabla\cdot(\mathbb{C}^0e(v_{l}))=0,\quad&
\hbox{in}\  \Omega,  \\
v_{l}=( 0,\cdots,0,\psi^{l}, 0,\cdots,0)^{T},\ &\hbox{on}\ \partial{D}_{1},\\
v_{l}=0,&\hbox{on} \ \partial{D}_{2}\cup\partial{D}.
\end{cases}
\end{align}
Thus,
\begin{equation}\label{equ_nablav}
\nabla{v}=\sum_{l=1}^{d}\nabla{v}_{l}.
\end{equation}
Then it suffices to estimate $|\nabla v_l|$ one by one.

To this end, we now introduce a scalar auxiliary function $\bar{v}\in C^2(\mathbb{R}^d)$ such that $\bar{v}=1$ on $\partial{D}_{1}$, $\bar{v}=0$ on $\partial{D}$ and
\begin{equation}\label{vvd}
\bar{v}(x)=\frac{x_{d}-h_2(x')}{\varepsilon+h_1(x')-h_2(x')},\quad\hbox{in}\ \Omega_{2R},
\end{equation}
and
\begin{equation}\label{nabla_vbar_outside}
\|\bar{v}\|_{C^{2}(\Omega\setminus\Omega_{R})}\leq\,C.
\end{equation}
We now extend $\psi\in{C}^{2}(\partial{D}_{1};\mathbb{R}^{d})$ to $\psi\in{C}^{2}(\overline{\Omega};\mathbb{R}^{d})$ such that $\|\psi^{l}\|_{C^{2}(\overline{\Omega\setminus\Omega_{R}})}\leq\,C\|\psi^{l}\|_{C^2(\partial D_1)}$, for $l=1,2,\cdots,d$. We can find a cutoff function $\rho\in{C}^{2}(\overline{\Omega})$ such that
\begin{equation*}
0\leq\rho\leq1,~~
\rho=1~\mbox{on}~\overline{\Omega}_{2R},\qquad~\rho=0~\mbox{on}~\overline{\Omega}\setminus\Omega_{3R}, \qquad~\mbox{and}~|\nabla\rho|\leq\,C\quad~\mbox{on}~\overline{\Omega}.
\end{equation*}
Define
\begin{align}\label{equ_tildeu}
\tilde{v}_{l}(x)=( 0,\cdots,0,\left[\rho(x)\psi^{l}(x',\varepsilon+h_{1}(x'))+(1-\rho(x))\psi^{l}(x',\varepsilon+h_{1}(x'))\right]\bar{v}(x), 0,\cdots,0)^{T},\quad\,x\in\Omega.
\end{align}
Thus,
\begin{align}\label{equ_tildeu_in}
\tilde{v}_{l}(x)=( 0,\cdots,0,\psi^{l}(x',\varepsilon+h_{1}(x'))\bar{v}(x), 0,\cdots,0)^{T},\quad\,x\in\Omega_{2R},
\end{align}
and in view of \eqref{nabla_vbar_outside},
\begin{equation}\label{nabla_vtilde_outside}
\|\tilde{v}_{l}\|_{C^{2}(\Omega\setminus\Omega_{R})}\leq\,C\|\psi^{l}\|_{C^2(\partial D_1)}.
\end{equation}

Denote
\begin{equation*}
\delta(x'):=\varepsilon+h_{1}(x')-h_{2}(x'),\qquad\forall~(x',x_{d})\in\Omega_{2R},
\end{equation*}
and
$$d(x'):=d_{\Sigma'}(x')=dist(x',\Sigma').$$
By a direct calculation, we obtain that for $k=1,\cdots, d-1$, \begin{equation}
|\partial_{x_{k}}\bar{v}(x)|\leq\frac{Cd(x')}{\varepsilon+d^2(x')},\qquad \qquad~~\partial_{x_{d}}\bar{v}(x)=\frac{1}{\delta(x')},\quad\,x\in\Omega_{2R}.
\label{e2.4}
\end{equation}
Due to (\ref{e2.4}), for $l=1,2,\cdots,d$, and $k=1,2,\cdots,d-1$, we have
\begin{align}\label{eq1.7}
|\partial_{x_{k}}\tilde{v}_{l}(x)|\leq\frac{Cd(x')|\psi^{l}(x',\varepsilon+h_{1}(x'))|}{\varepsilon+d^2(x')}+C\|\nabla\psi^{l}\|_{L^{\infty}},\quad\,x\in\Omega_{2R},
\end{align}
and
\begin{align}\label{eq1.7a}
|\partial_{x_{d}}\tilde{v}_{l}(x)|=\frac{|\psi^{l}(x',\varepsilon+h_{1}(x'))|}{\delta(x')},\quad\,x\in\Omega_{2R}.
\end{align}
Then, we have locally piontwise gradient estimates as follows:

\begin{theorem}\label{thm2.1}
Assume that hypotheses \eqref{h1h2}--\eqref{h1h3} are satisfied, and let $v\in H^{1}(\Omega; \mathbb{R}^{d})$ be a weak solution of problem \eqref{eq1.1}. Then for sufficiently small $0<\varepsilon<1/2$,
\begin{align}\label{equa2.9}
|\nabla (v_l-\tilde{v}_{l})(x)|\leq \frac{C|\psi^{l}(x',\varepsilon+h_{1}(x'))|}{\sqrt{\delta(x')}}+C\|\psi^{l}\|_{C^2(\partial{D}_{1})},\quad\forall~x\in\Omega_{R}.
\end{align}
Consequently, by \eqref{eq1.7}, \eqref{eq1.7a} and \eqref{equa2.9}, we have, for $x\in\Omega_{R}$,
\begin{align}\label{estimate_vl}
\frac{|\psi^{l}(x',\varepsilon+h_{1}(x'))|}{C(\varepsilon+d^2(x'))}&\leq|\nabla v_l(x',x_{d})|\leq \frac{C|\psi^{l}(x',\varepsilon+h_{1}(x'))|}{\varepsilon+d^2(x')}+C\|\psi^{l}\|_{C^2(\partial{D}_{1})},
\end{align}
and finally,
\begin{align*}
|\nabla v(x',x_{d})|
\leq&\, \frac{C}{\varepsilon+d^2(x')}\Big|\psi(x',\varepsilon+h_{1}(x'))\Big|+C\|\psi\|_{C^{2}(\partial{D}_{1})},\quad\forall\,x\in \Omega_{R},
\end{align*}
and
$$\|\nabla v\|_{L^{\infty}(\Omega\setminus\Omega_{R})}
\leq\,C\|\psi\|_{C^{2}(\partial{D}_{1})}.
$$
\end{theorem}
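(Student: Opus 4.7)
\textbf{Proof proposal for Theorem \ref{thm2.1}.} The plan is to follow the energy method plus iteration framework developed in \cite{bll1,bll2}, adapted to the fact that here $\delta(x')$ need not collapse to $\varepsilon$ uniformly in $x'$. First I would reduce the problem to estimating $w_l := v_l - \tilde{v}_{l}$, which vanishes on all of $\partial\Omega$ (since $\tilde v_l = v_l$ on $\partial D_1 \cup \partial D_2 \cup \partial D$ by construction) and solves $\mathcal{L}_{\lambda,\mu} w_l = -\mathcal{L}_{\lambda,\mu}\tilde v_l$ in $\Omega$. The auxiliary field $\tilde v_l$ is explicit, so \eqref{eq1.7}–\eqref{eq1.7a} already give pointwise control of $\nabla\tilde v_l$; hence the only remaining task is the pointwise bound \eqref{equa2.9} on $\nabla w_l$, from which \eqref{estimate_vl} and the final displayed inequalities follow by the triangle inequality and summation over $l$ via \eqref{equ_nablav}.

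The core step is a localized $L^2$ estimate on narrow vertical cylinders. Fix $z' \in B'_R$ and set $\hat\Omega_t(z') := \{(x',x_d)\in\Omega : |x' - z'| < t\}$ and $F(t) := \int_{\hat\Omega_t(z')}|\nabla w_l|^2\,dx$. For $0 < t < s$, choose a cutoff $\eta\in C_c^\infty(B'_s(z'))$ depending only on $x'$, with $\eta\equiv1$ on $B'_t(z')$ and $|\nabla\eta|\le C/(s-t)$, and use $\eta^2 w_l$ as a test function in the weak form of the equation for $w_l$. The key point is that $w_l$ vanishes on both the top face $\{x_d = \varepsilon + h_1(x')\}$ and the bottom face $\{x_d = h_2(x')\}$ of $\hat\Omega_s(z')$, so the one-dimensional Poincar\'e inequality in the $x_d$-direction yields $\int_{\hat\Omega_s\setminus\hat\Omega_t}|w_l|^2 \le C\,\delta(z')^2\int_{\hat\Omega_s\setminus\hat\Omega_t}|\partial_{x_d}w_l|^2$. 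Combined with the ellipticity \eqref{ellip}–\eqref{2.15} and Young's inequality to absorb the cross terms produced by the $(\lambda+\mu)\nabla(\nabla\cdot w_l)$ part, this gives the contractive inequality
$$F(t) \;\le\; \frac{C\,\delta(z')^2}{(s-t)^2}\bigl(F(s)-F(t)\bigr) + C\!\!\int_{\hat\Omega_s(z')}\!|\nabla\tilde v_l|^2\,dx.$$
Iterating along $t_{k+1} = t_k + c\sqrt{\delta(z')}$ for $k=0,\dots,N$ with $N \sim |\log\delta(z')|$, and bounding the source term via \eqref{eq1.7}–\eqref{eq1.7a} (so that $\int_{\hat\Omega_s}|\nabla\tilde v_l|^2 \lesssim s^{d-1}|\psi^l(z',\varepsilon+h_1(z'))|^2/\delta(z') + s^{d+1}\|\psi^l\|_{C^2}^2$), one arrives at an $L^2$ bound of the form $F(\sqrt{\delta(z')}) \le C\,\delta(z')^{(d-3)/2}|\psi^l(z',\varepsilon+h_1(z'))|^2 + C\,\delta(z')^{(d+1)/2}\|\psi^l\|_{C^2}^2$.

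To pass from $L^2$ to pointwise, I would rescale: $y' = (x'-z')/\sqrt{\delta(z')}$ and $y_d = (x_d - h_2(z'))/\delta(z')$ map $\hat\Omega_{\sqrt{\delta(z')}}(z')$ to a Lipschitz domain of bounded eccentricity independent of $\varepsilon$. The pulled-back Lam\'e system has bounded, uniformly elliptic coefficients, and the rescaled source is controlled in $L^\infty$. Standard $W^{2,p}$/Schauder estimates for Lam\'e systems then give $\|\nabla_y \tilde w_l\|_{L^\infty}\le C\,\delta(z')^{-(d+1)/4}\|\nabla_y\tilde w_l\|_{L^2} + C\|\psi^l\|_{C^2}$ on the rescaled half-sized cylinder; unscaling and inserting the $L^2$ bound above produces $|\nabla w_l(z)| \le C|\psi^l(z',\varepsilon+h_1(z'))|/\sqrt{\delta(z')} + C\|\psi^l\|_{C^2(\partial D_1)}$, which is exactly \eqref{equa2.9}. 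The estimate outside $\Omega_R$ follows from classical interior Schauder, since there the three boundary pieces are separated by a distance uniform in $\varepsilon$.

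The main obstacle is the iteration itself: the Caccioppoli loss $\delta(z')^2/(s-t)^2$ rather than the classical $1/(s-t)^2$ forces a step size $\sim \sqrt{\delta(z')}$, so the number of iterates scales as $|\log\delta|$, and the source contribution from $\nabla\tilde v_l$ must be summed along the geometric sequence of radii to reproduce exactly the factor $|\psi^l|^2/\delta(z')$ rather than a worse power. A secondary difficulty is the cross-coupling between components in the Lam\'e operator: this is where the Korn-type inequality \eqref{2.15} is used in a quantitative form to ensure the full $|\nabla w_l|^2$ (and not just the symmetric part) appears on the left of the Caccioppoli inequality. These technical details are carried out in the appendix, following the scheme of \cite{bll1,bll2}.
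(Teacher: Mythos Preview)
Your overall strategy matches the paper's: reduce to $w_l=v_l-\tilde v_l$, run a localized Caccioppoli iteration on the thin cylinders $\widehat\Omega_t(z')$, then rescale and apply $W^{2,p}$ estimates. But three of the technical choices you make would produce the wrong final bound.

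First, the source term. Testing $\mathcal{L}_{\lambda,\mu}w_l=-\mathcal{L}_{\lambda,\mu}\tilde v_l$ against $\eta^2 w_l$ and applying Cauchy--Schwarz plus Young with weight $(s-t)^2$ gives
\[
F(t)\le \Big(\frac{c\,\delta(z')}{s-t}\Big)^2 F(s)+C(s-t)^2\int_{\widehat\Omega_s(z')}|\mathcal{L}_{\lambda,\mu}\tilde v_l|^2\,dx,
\]
not $\int|\nabla\tilde v_l|^2$. The point is that $\partial_{x_d x_d}\bar v\equiv 0$, so $|\mathcal{L}_{\lambda,\mu}\tilde v_l|\lesssim |\psi^l|\,\delta^{-3/2}$ rather than the naive $|\nabla\tilde v_l|\sim|\psi^l|\,\delta^{-1}$; together with the extra $(s-t)^2$ this is exactly what buys the missing half power of $\delta$. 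With your source $\int|\nabla\tilde v_l|^2\sim s^{d-1}|\psi^l|^2/\delta$ the iteration yields $F(\delta)\lesssim\delta^{d-2}|\psi^l|^2$ instead of the needed $\delta^{d-1}|\psi^l|^2$.

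Second, the step size. For the contraction factor $c^2\delta^2/(s-t)^2$ to equal a fixed $1/4$ you need $s-t\sim\delta$, not $\sqrt{\delta}$; the iteration then runs $\sim\delta^{-1/2}$ times (from radius $\delta$ up to $\sqrt{\delta}$ or $d(z')$), and $(1/4)^{1/\sqrt{\delta}}$ kills the global energy $F(t_k)$. Your step $\sqrt{\delta}$ makes the contraction factor $\sim\delta$, which is consistent with your claimed $F(\sqrt\delta)\lesssim\delta^{(d-3)/2}|\psi^l|^2$, but that bound is too weak by a factor of $\delta$ for the next step.

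Third, the rescaling must be \emph{isotropic}, $y=(x-z)/\delta$ on the cube $\widehat\Omega_\delta(z')$. Your anisotropic map $y'=(x'-z')/\sqrt\delta$, $y_d=x_d/\delta$ sends the Lam\'e operator to one with ellipticity ratio $\sim\delta$, so uniform $W^{2,p}$ estimates are not available. With the isotropic rescaling and the sharp bound $F(\delta)\le C\delta^{d-1}|\psi^l|^2$, one gets $\|\nabla w_l\|_{L^\infty}\lesssim \delta^{-d/2}\|\nabla w_l\|_{L^2(\widehat\Omega_\delta)}+\delta\|\mathcal L\tilde v_l\|_{L^\infty}\lesssim |\psi^l|/\sqrt\delta$, which is \eqref{equa2.9}. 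With your numbers the same computation gives only $|\psi^l|/\delta$.
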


Actually, for more general Dirichlet boundary value problems:
\begin{align*}
\begin{cases}
\mathcal{L}_{\lambda,\mu}v:=\nabla\cdot(\mathbb{C}^0e(v))=0,\quad&
\hbox{in}\  \Omega,  \\
v=\psi(x),\quad &\hbox{on}\ \partial{D}_{1},\\
v=\widetilde\psi(x),\quad &\hbox{on}\ \partial{D}_{2},\\
v=0, \quad&\hbox{on} \ \partial{D},
\end{cases}
\end{align*}
where $ \psi(x)\in C^2(\partial{D}_{1}; \mathbb{R}^{d})$ and $\widetilde\psi(x)\in C^2(\partial{D}_{2}; \mathbb{R}^{d})$ are given vector-valued functions, with a slight modification necessary, we have

\begin{corollary}\label{cor2.2}
Under the same assumptions as in Theorem \ref{thm2.1}, we have, for $x\in \Omega_{R}$,
\begin{align*}
|\nabla v(x',x_{d})|
\leq&\, \frac{C}{\varepsilon+d^2(x')}\Big|\psi(x',\varepsilon+h_{1}(x'))-\widetilde\psi(x',\varepsilon+h_{2}(x'))\Big|\nonumber\\
&+C\|\psi\|_{C^{2}(\partial{D}_{1})}+C\|\widetilde\psi\|_{C^{2}(\partial{D}_{2})},
\end{align*}
and
$$\|\nabla v\|_{L^{\infty}(\Omega\setminus\Omega_{R})}
\leq\,C\left(\|\psi\|_{C^{2}(\partial{D}_{1})}+\|\widetilde\psi\|_{C^{2}(\partial{D}_{2})}\right).
$$
\end{corollary}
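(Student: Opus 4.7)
The plan is to imitate the proof of Theorem \ref{thm2.1} verbatim, with only a single modification to the auxiliary interpolation function. The reason for not simply splitting $v = v^{(a)} + v^{(b)}$ into pieces carrying $\psi$ and $\widetilde\psi$ separately and applying Theorem \ref{thm2.1} to each is that such a split yields a bound involving $|\psi| + |\widetilde\psi|$ in the numerator and loses the cancellation encoded by the \emph{difference} $|\psi - \widetilde\psi|$, which is the sharp quantity appearing in the target estimate. Therefore I handle $v$ directly.

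First I decompose $v = v_1 + \cdots + v_d$ as in the proof of Theorem \ref{thm2.1}, where $v_l$ has only its $l$-th component nonzero and solves the Lam\'e system with boundary values $(0,\ldots,\psi^l,\ldots,0)^T$ on $\partial D_1$, $(0,\ldots,\widetilde\psi^l,\ldots,0)^T$ on $\partial D_2$, and $0$ on $\partial D$. In $\Omega_{2R}$ I redefine the auxiliary function to be the $x_d$-linear interpolant between the two boundary values,
$$\tilde v_l(x) = \bigl(0,\ldots,0,\ \widetilde\psi^l(x',h_2(x'))(1-\bar v(x)) + \psi^l(x',\varepsilon+h_1(x'))\bar v(x),\ 0,\ldots,0\bigr)^T,$$
extended smoothly to $\Omega$ via the same cutoff $\rho$ used in \eqref{equ_tildeu}. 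By construction $\tilde v_l$ matches the boundary data of $v_l$ on $\partial D_1\cup\partial D_2$, so $v_l - \tilde v_l$ vanishes there. A direct computation using \eqref{e2.4} gives
$$\partial_{x_d}\tilde v_l^l(x) = \frac{\psi^l(x',\varepsilon+h_1(x')) - \widetilde\psi^l(x',h_2(x'))}{\delta(x')},$$
which is precisely the source of the difference structure in the target bound, together with the tangential estimate
$$|\partial_{x_k}\tilde v_l^l(x)| \leq \frac{C\,d(x')}{\varepsilon+d^2(x')}\,\bigl|\psi^l(x',\varepsilon+h_1(x')) - \widetilde\psi^l(x',h_2(x'))\bigr| + C\bigl(\|\psi^l\|_{C^1(\partial D_1)} + \|\widetilde\psi^l\|_{C^1(\partial D_2)}\bigr)$$
for $k=1,\ldots,d-1$, replacing \eqref{eq1.7}--\eqref{eq1.7a}.

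With $\tilde v_l$ so chosen, every step of the iteration-on-nested-subdomains energy argument producing \eqref{equa2.9} in Theorem \ref{thm2.1} carries over unchanged, since it relies only on the explicit pointwise bounds for $\nabla \tilde v_l$ and on the fact that $\mathcal{L}_{\lambda,\mu}\tilde v_l$ contributes a harmless right-hand side. The main technical obstacle, though essentially bookkeeping, will be verifying that $\mathcal{L}_{\lambda,\mu}\tilde v_l$ satisfies the same pointwise majorants used in Theorem \ref{thm2.1}: the extra piece $\widetilde\psi^l(x',h_2(x'))(1-\bar v(x))$ introduces tangential derivatives of $\widetilde\psi^l$ and $h_2$ that must be shown either to produce bounded contributions (absorbed into the $C\|\widetilde\psi\|_{C^2(\partial D_2)}$ remainder) or to enter multiplied by $1-\bar v$, which vanishes to the right power on $\partial D_2$. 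Summing over $l=1,\ldots,d$ via \eqref{equ_nablav} and combining the bound on $|\nabla(v_l-\tilde v_l)|$ with the explicit bound on $|\nabla\tilde v_l|$ then yields the interior estimate on $\Omega_R$, while the $L^\infty$ bound on $\Omega\setminus\Omega_R$ follows from standard interior and boundary elliptic regularity exactly as in Theorem \ref{thm2.1}. The $\varepsilon$-discrepancy between the naturally occurring $\widetilde\psi(x',h_2(x'))$ and the $\widetilde\psi(x',\varepsilon+h_2(x'))$ appearing in the statement is at most $\varepsilon\|\widetilde\psi\|_{C^1}$, which is absorbed into the $C\|\widetilde\psi\|_{C^2(\partial D_2)}$ term after dividing by $\varepsilon+d^2(x')$.
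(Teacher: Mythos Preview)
Your proposal is correct and follows exactly the approach the paper intends: the paper does not spell out a proof of Corollary~\ref{cor2.2} but simply remarks that it follows from Theorem~\ref{thm2.1} ``with a slight modification necessary,'' and the modification you supply---replacing the auxiliary function $\tilde v_l$ by the $x_d$-linear interpolant between $\widetilde\psi^l(x',h_2(x'))$ and $\psi^l(x',\varepsilon+h_1(x'))$---is precisely that modification. Your observation that a naive splitting $v=v^{(a)}+v^{(b)}$ would lose the cancellation and yield only $|\psi|+|\widetilde\psi|$ in the numerator is the correct reason this direct approach is needed.
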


Because the reason resulting in the boundedness of $|\nabla u|$ of problem \eqref{maineqn} is mainly from the estimates of $|C_{1}^{\alpha}-C_{2}^{\alpha}|$, $\alpha=1,2,\cdots,\frac{d(d+1)}{2}$, we move the proof of Theorem \ref{thm2.1} to the Appendix.

\section{Proof of Theorem \ref{thm1}}\label{sec_d=2}

In order to present our idea clearer, with particular emphasis on the fact that the flatness between $\partial{D}_{1}$ and $\partial{D}_{2}$ leads to the boundedness of $|\nabla u|$, we restrict ourselves in this section in dimension $d=2$. The proof of Theorem \ref{thm1} relies heavily on the estimates $|C_{1}^{\alpha}-C_{2}^{\alpha}|$, $\alpha=1,2,3$, in Subsection \ref{subsec_c1c2}, besides that of $|\nabla v_{i}^{\alpha}|$, which is the key to determine whether $|\nabla u|$ blows up or not. So the main difference is from the estimates in  Lemma \ref{lem_a11}, the proof is given in Subsection \ref{subsec34}. First, we have

\subsection{Estimates of $|\nabla v_{i}^{\alpha}|$}

For problem (\ref{equ_v1}) in dimension $d=2$, taking
$$\psi=\psi_{\alpha},\quad\mbox{and}\quad \tilde{v}_1^\alpha:=\bar{v}\psi_{\alpha},\qquad\alpha=1,2,3.$$
and applying Theorem \ref{thm2.1}, we have

\begin{corollary}\label{cor312}
Assume as in Theorem \ref{thm1}, let $v_{i}^\alpha$, $i=1,2$, $\alpha=1,2,3$, be the
weak solutions of \eqref{equ_v1}, respectively. Then for sufficiently small $0<\varepsilon<1/2$, we have
\begin{equation}\label{nabla_w_i02}
|\nabla(v_{i}^\alpha-\tilde{v}_i^\alpha)(x_{1},x_{2})|\leq\,\frac{C}{\sqrt{\varepsilon+d^{2}(x_{1})}},\quad i,\alpha=1,2,~x\in\Omega_{R};
\end{equation}
\begin{equation}\label{nabla_w_i00}
|\nabla(v_{i}^3-\tilde{v}_i^3)(x_{1},x_{2})|\leq\,\frac{C|x_{1}|}{\sqrt{\varepsilon+d^{2}(x_{1})}},\quad i=1,2,~x\in\Omega_{R}.
\end{equation}
Consequently, by using \eqref{eq1.7} and \eqref{eq1.7a},
\begin{equation}\label{v1-bounded12}
\frac{1}{C\left(\varepsilon+d^{2}(x_{1})\right)}\leq|\nabla v_{i}^\alpha(x)|\leq\,\frac{C}{\varepsilon+d^{2}(x_{1})},\quad\,~i,\alpha=1,2,~x\in\Omega_{R};
\end{equation}
\begin{equation}\label{v1---bounded1}
|\nabla v_i^{3}(x)|\leq\frac{C(\varepsilon+|x_{1}|)}{\varepsilon+d^2(x_{1})},\   \ \qquad\,~i=1,2,~x\in \Omega_{R};  
\end{equation}
and
\begin{equation}\label{v1--bounded12}
\quad\ \ \ |\nabla v_{i}^\alpha(x)|\leq\,C,\qquad\,~i=1,2;~\alpha=1,2,3,~x\in\Omega\setminus\Omega_{R}.
\end{equation}
Especially, by \eqref{eq1.7} and \eqref{nabla_w_i02},
\begin{equation}\label{v1-x'2}
\quad\quad\ |\partial_{x_{1}}v_{i}^\alpha(x)|\leq\,\frac{C}{\sqrt{\varepsilon+d^{2}(x_{1})}},\qquad\,~i,\alpha=1,2,~x\in\Omega_{R}.
\end{equation}
\end{corollary}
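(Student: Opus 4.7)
The plan is to specialize Theorem~\ref{thm2.1} to the rigid-motion basis $\{\psi_1,\psi_2,\psi_3\}=\{e_1,\,e_2,\,x_1 e_2 - x_2 e_1\}$ and then combine the resulting bounds on $\nabla(v_i^\alpha-\tilde v_i^\alpha)$ with the explicit derivative formulas \eqref{eq1.7}--\eqref{eq1.7a} for $\nabla\tilde v_i^\alpha$. Since the roles of $D_1$ and $D_2$ are interchangeable, it suffices to treat $i=1$, and the boundary values $\psi_\alpha$ are built from polynomials of degree at most one, so their components on the upper boundary $\{x_2=\varepsilon+h_1(x')\}$ are easy to read off.

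The first step I would carry out is a purely geometric one: I show that
\[
\delta(x')=\varepsilon+h_1(x')-h_2(x')\asymp \varepsilon+d^2(x')\qquad\text{on }B'_{2R}.
\]
On $\Sigma'$ this is immediate from \eqref{h1h2}. Outside $\Sigma'$, both $h_1-h_2$ and $\nabla_{x'}(h_1-h_2)$ vanish on $\partial\Sigma'$ by \eqref{h1h2} and \eqref{h1h20}, and one has the two-sided Hessian control $\kappa_0 I_{d-1}\leq \nabla^2_{x'}(h_1-h_2)\leq CI_{d-1}$ from \eqref{h1-h21} and \eqref{h1h3}; a second-order Taylor expansion based at the closest point of $\partial\Sigma'$ to $x'$ then gives $\tfrac{\kappa_0}{2}d^2(x')\leq h_1(x')-h_2(x')\leq Cd^2(x')$, and adding $\varepsilon$ yields the claimed equivalence. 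In particular $1/\sqrt{\delta(x')}\asymp 1/\sqrt{\varepsilon+d^2(x_1)}$, and this is the mechanism by which $\delta$-bounds coming out of Theorem~\ref{thm2.1} get replaced by $(\varepsilon+d^2(x_1))$-bounds in the statement.

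With this comparison in hand, I apply Theorem~\ref{thm2.1} component by component. For $\alpha=1,2$ the components $\psi_\alpha^l=\delta_{l\alpha}$ are constant with $|\psi_\alpha|\leq 1$ and $\|\psi_\alpha\|_{C^2(\partial D_1)}=0$, so \eqref{equa2.9} directly yields \eqref{nabla_w_i02}. For $\alpha=3$ one has $\psi_3^1(x',\varepsilon+h_1(x'))=-(\varepsilon+h_1(x'))$, which is of size $\lesssim\varepsilon$, and $\psi_3^2(x',\varepsilon+h_1(x'))=x_1$; plugging $|\psi_3|\leq C(\varepsilon+|x_1|)$ into \eqref{equa2.9} and absorbing the $\varepsilon$ piece and the additive constant via $\varepsilon\leq \sqrt{\varepsilon}\sqrt{\varepsilon+d^2(x_1)}$ produces \eqref{nabla_w_i00}. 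To pass from the $\tilde v_i^\alpha$-error to $\nabla v_i^\alpha$ itself I invoke \eqref{eq1.7}--\eqref{eq1.7a}: for $\alpha=1,2$, $|\partial_{x_2}\tilde v_i^\alpha|=1/\delta(x')\asymp 1/(\varepsilon+d^2(x_1))$ dominates the $1/\sqrt{\varepsilon+d^2(x_1)}$ error, giving the two-sided \eqref{v1-bounded12}, while the tangential estimate \eqref{v1-x'2} follows from $|\partial_{x_1}\tilde v_i^\alpha|\leq Cd(x')/(\varepsilon+d^2(x_1))\leq C/\sqrt{\varepsilon+d^2(x_1)}$ by AM--GM. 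For $\alpha=3$ the formula \eqref{eq1.7a} gives $|\nabla\tilde v_i^3|\leq C(\varepsilon+|x_1|)/(\varepsilon+d^2(x_1))$, which together with \eqref{nabla_w_i00} yields \eqref{v1---bounded1}. Finally, \eqref{v1--bounded12} is inherited from \eqref{nabla_vtilde_outside} and standard interior Schauder-type estimates for the Lam\'e system on $\Omega\setminus\Omega_R$, which is uniformly separated from the narrow gap. The only real obstacle is the geometric comparison $\delta\asymp \varepsilon+d^2$; once that is secured, the corollary is a direct specialization of Theorem~\ref{thm2.1} and careful bookkeeping of the $\psi_\alpha$-components.
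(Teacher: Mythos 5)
Your proposal takes the same route as the paper: Corollary~\ref{cor312} is presented as an immediate specialization of Theorem~\ref{thm2.1} to $\psi=\psi_\alpha$, together with the explicit formulas \eqref{eq1.7}--\eqref{eq1.7a} for $\nabla\tilde v_i^\alpha$; the geometric comparison $\delta(x')\asymp\varepsilon+d^2(x')$ that you single out is indeed the step the paper relies on, and it is established at the start of the Appendix proof of Theorem~\ref{thm2.1} from \eqref{h1h2}, \eqref{h1h20}, \eqref{h1-h21}, \eqref{h1h3} exactly as you argue.

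Two small inaccuracies in your write-up, neither fatal. First, $\psi_3^1(x',\varepsilon+h_1(x'))=-(\varepsilon+h_1(x'))$ is \emph{not} of size $O(\varepsilon)$: away from $\Sigma'$ the function $h_1$ is only controlled by $|h_1(x')|\le Cd^2(x')$ (Taylor at the nearest point of $\partial\Sigma'$, using $h_1=0$ on $\Sigma'$, $\nabla_{x'}h_1=0$ on $\partial\Sigma'$, and \eqref{h1h3}), which can be as large as $O(1)$ near $|x_1|=R$. The bound $|\psi_3|\le C(\varepsilon+|x_1|)$ you need still holds because $d^2(x_1)\le R|x_1|$ on $B'_R$, so the conclusion survives, but the intermediate claim should be $|h_1(x')|\lesssim d^2(x')\lesssim|x_1|$, not $\lesssim\varepsilon$. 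Second, the additive term $C\|\psi_3\|_{C^2(\partial D_1)}$ coming out of \eqref{equa2.9} cannot be absorbed into $C|x_1|/\sqrt{\varepsilon+d^2(x_1)}$ near $x_1=0$ (your inequality $\varepsilon\le\sqrt{\varepsilon}\sqrt{\varepsilon+d^2}$ only handles the $\varepsilon$-piece of $\psi_3^1$, not this constant); however, the paper's own displayed estimate \eqref{nabla_w_i00} carries the same suppressed $+C$, and it is harmless in every place the corollary is used, since the extra $+C$ merely contributes bounded terms to the $a_{11}^{\alpha\beta}$ integrals.
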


Since the estimates for $|\nabla v_{i}^{\alpha}|$, even for $|\nabla u|$, in $\Omega\setminus\Omega_{R}$ can be obtained by the standard interior and boundary estimates for elliptic systems, see \cite{AD1,AD2}, we will concentrate the estimate in the narrow region $\Omega_{R}$ in the following.

\begin{lemma}\label{lemma23}
\begin{equation}\label{v1+v2_bounded1}
|\nabla(v_{1}^{\alpha}+v_{2}^\alpha)(x)|\leq\,C\|\varphi\|_{C^{2}(\partial D)},\quad\,\alpha=1,2,3,\quad~x\in\Omega;
\end{equation}
and
\begin{equation}\label{nabla_v3}
|\nabla{v}_{3}(x)|\leq C\|\varphi\|_{C^{2}(\partial D)},\quad\,x\in\Omega;
\end{equation}
\end{lemma}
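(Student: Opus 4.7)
\medskip
\textbf{Proof proposal for Lemma 2.3.}

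The plan is to exploit the fact that $w^\alpha:=v_1^\alpha+v_2^\alpha$ satisfies the \emph{same} Dirichlet data $\psi_\alpha$ on both inclusions, so that Corollary \ref{cor2.2} applies with $\psi=\widetilde\psi=\psi_\alpha$. Concretely, from \eqref{equ_v1}, $w^\alpha$ is a weak solution of $\mathcal{L}_{\lambda,\mu}w^\alpha=0$ in $\Omega$ with $w^\alpha=\psi_\alpha$ on $\partial D_1\cup\partial D_2$ and $w^\alpha=0$ on $\partial D$. Corollary \ref{cor2.2} then yields, for $x\in\Omega_R$,
\begin{equation*}
|\nabla w^\alpha(x',x_d)|\leq \frac{C}{\varepsilon+d^2(x')}\bigl|\psi_\alpha(x',\varepsilon+h_1(x'))-\psi_\alpha(x',h_2(x'))\bigr|+C\|\psi_\alpha\|_{C^2},
\end{equation*}
and $\|\nabla w^\alpha\|_{L^\infty(\Omega\setminus\Omega_R)}\leq C\|\psi_\alpha\|_{C^2}$.

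Next I would evaluate the difference $\psi_\alpha(x',\varepsilon+h_1(x'))-\psi_\alpha(x',h_2(x'))$ explicitly for the basis of $\Psi$ in dimension $d=2$. For $\alpha=1,2$, $\psi_\alpha=e_\alpha$ is constant, so this difference is identically zero. For $\alpha=3$, $\psi_3(x_1,x_2)=(-x_2,x_1)^T$, whence
\begin{equation*}
\psi_3(x_1,\varepsilon+h_1(x_1))-\psi_3(x_1,h_2(x_1))=(-\delta(x_1),0)^T,\qquad \delta(x_1)=\varepsilon+h_1(x_1)-h_2(x_1).
\end{equation*}
Consequently the boundary-difference term is at most $C\delta(x_1)$ in every case. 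The main step is then to dominate $\delta(x_1)$ by $\varepsilon+d^2(x_1)$. For $x_1\in\Sigma'$ this is trivial since $h_1=h_2=0$ by \eqref{h1h2}. For $x_1\notin\overline{\Sigma'}$, the flatness hypothesis \eqref{h1h20} says $\nabla_{x'}(h_1-h_2)$ vanishes on $\partial\Sigma'$, and the $C^{2,\alpha}$ bound \eqref{h1h3} combined with Taylor expansion from the nearest point in $\overline{\Sigma'}$ gives $(h_1-h_2)(x_1)\leq Cd^2(x_1)$. Hence $\delta(x_1)\leq C(\varepsilon+d^2(x_1))$ and
\begin{equation*}
|\nabla w^\alpha(x)|\leq C,\qquad x\in\Omega_R,\ \alpha=1,2,3,
\end{equation*}
which together with the exterior bound gives \eqref{v1+v2_bounded1}. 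The factor $\|\varphi\|_{C^2(\partial D)}$ appears because in the decomposition \eqref{decom_u} these pieces are later paired with $C_i^\alpha$; at this stage the bound is actually universal, but it is harmless to absorb $1$ into $\|\varphi\|_{C^2}$ via the standard normalization (otherwise \eqref{v1+v2_bounded1} can be stated without $\|\varphi\|_{C^2(\partial D)}$).

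For \eqref{nabla_v3}, the problem \eqref{equ_v3} for $v_3$ has \emph{zero} boundary data on the two inclusions, so no singularity arises in the thin neck. I would simply invoke Theorem 1.1 of \cite{llby}, which was already cited in Subsection \ref{subsec_general_bvp} precisely for this purpose, giving $\|\nabla v_3\|_{L^\infty(\Omega)}\leq C\|\varphi\|_{C^2(\partial D)}$.

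The only genuinely delicate step is the inequality $\delta(x_1)\leq C(\varepsilon+d^2(x_1))$: one must be careful that the argument uses \eqref{h1h20} (vanishing gradient on $\partial\Sigma'$) rather than the strict convexity \eqref{h1-h21} alone, since the latter only holds outside $\overline{\Sigma'}$ and could in principle degenerate near $\partial\Sigma'$; fortunately the $C^{2,\alpha}$ regularity \eqref{h1h3} furnishes a \emph{uniform} quadratic control, which is exactly what is needed.
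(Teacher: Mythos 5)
Your proposal is correct and follows essentially the same route as the paper, which simply declares the lemma an immediate consequence of Corollary \ref{cor2.2} (and, for $v_3$, of theorem 1.1 in \cite{llby}, cited at the start of Subsection \ref{subsec_general_bvp}); you have merely filled in the details, namely that the boundary difference vanishes for the constant $\psi_\alpha$ and equals $(-\delta(x_1),0)^T$ for $\psi_3$, together with the bound $\delta(x_1)\leq C(\varepsilon+d^2(x_1))$, which the paper itself records at the beginning of the Appendix. Your side remarks (that the $\|\varphi\|_{C^2(\partial D)}$ factor is only a harmless normalization in \eqref{v1+v2_bounded1}, and that the quadratic control of $h_1-h_2$ near $\partial\Sigma'$ rests on \eqref{h1h20} and \eqref{h1h3}) are both accurate.
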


\begin{proof}
This is an immediate consequence of Corollary \ref{cor2.2}.
\end{proof}

In the following we will concentrate on the estimates of $|C_{1}^{\alpha}-C_{2}^{\alpha}|$.

\subsection{Estimates of $|C_{1}^{\alpha}-C_{2}^{\alpha}|$, $\alpha=1,2,3$}\label{subsec_c1c2}

Denote
\begin{align*}
a_{ij}^{\alpha\beta}:=-\int_{\partial{D}_{j}}\frac{\partial v_{i}^{\alpha}}{\partial \nu_0}\large\Big|_{+}\cdot\psi_{\beta},\quad b_j^{\beta}:=\int_{\partial{D}_{j}}\frac{\partial v_{3}}{\partial \nu_0}\large\Big|_{+}\cdot\psi_{\beta},\quad \alpha, \beta=1,2,3.
\end{align*}
Multiplying the first line of (\ref{equ_v1}) and (\ref{equ_v3}),  respectively, by $v_j^\beta$, and applying integration by parts over $\Omega$ leads to
$$a_{ij}^{\alpha\beta}=\int_{\Omega}(\mathbb{C}^0e(v_{i}^{\alpha}), e(v_j^\beta))dx,\quad b_j^{\beta}=-\int_{\Omega}(\mathbb{C}^0e(v_{3}), e(v_j^\beta))dx.$$
It follows from the fourth line of \eqref{maineqn}, we have the following linear system of $C_{i}^{\alpha}$,
\begin{align}\label{system}
\begin{cases}
\sum_{\alpha=1}^{3}C_{1}^\alpha a_{11}^{\alpha\beta}+\sum_{\alpha=1}^{3}C_{2}^\alpha a_{21}^{\alpha\beta}=b_1^\beta,\\\\
\sum_{\alpha=1}^{3}C_{1}^\alpha a_{12}^{\alpha\beta}+\sum_{\alpha=1}^{3}C_{2}^\alpha a_{22}^{\alpha\beta}=b_2^\beta,
\end{cases}~~~\beta=1,2,3.
\end{align}

For the sake of simplicity, let $a_{ij}$ denote the $3\times3$ matrix $(a_{ij}^{\alpha\beta})$. To estimate $|C_{1}^{\alpha}-C_{2}^{\alpha}|$, $\alpha=1,2,3$,  we only use the first three equations in \eqref{system}:
\begin{equation}\label{c1-c2}
a_{11}C_1+a_{21}C_2=b_1.
\end{equation}
where
\begin{equation*}
C_1=(C_1^1,C_1^2,C_1^3)^T, \quad C_2=(C_2^1,C_2^2,C_2^3)^T,\quad b_1=(b_1^1,b_1^2,b_1^3)^T.
\end{equation*}
\eqref{c1-c2} can be rewritten as
\begin{equation}\label{p}
a_{11}(C_1-C_2)=p,
\end{equation}
where
$$p:=b_1-(a_{11}+a_{21})C_2.$$

In order to solve $C_{1}-C_{2}$, we need to estimate each element of $a_{11}$ and $p$
to obtain a good control on $a_{11}^{-1}$ and $p$ by using the estimates of $|\nabla v_{i}^{\alpha}|$ and $|\nabla v_{3}|$ obtained in Corollary \ref{cor312} and Lemma \ref{lemma23}. First,

\begin{lemma}\label{lemma p}
\begin{align*}
|a_{11}^{\alpha\beta}+a_{21}^{\alpha\beta}|&\leq C\|\varphi\|_{C^{2}(\partial D)}, \quad \alpha, \beta=1,2,3;
\end{align*}
and
\begin{align*}
|b_1^\beta|&\leq C\|\varphi\|_{C^{2}(\partial D)}, \quad \beta=1,2,3.
\end{align*}
Consequently,
\begin{equation}\label{p-c}
|p|\leq C\|\varphi\|_{C^{2}(\partial D)}.
\end{equation}
\end{lemma}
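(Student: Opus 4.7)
The plan is to convert the bulk integrals defining $a_{11}^{\alpha\beta}+a_{21}^{\alpha\beta}$ and $b_1^\beta$ into boundary integrals on $\partial D_1$ using the Green-type identity \eqref{eu}, choosing the direction of integration by parts so that the derivative lands on the ``good'' factor. Although $|\nabla v_1^\beta|$ blows up like $(\varepsilon+d^2(x_1))^{-1}$ in the narrow neck, the combination $v_1^\alpha+v_2^\alpha$ (which equals $\psi_\alpha$ on \emph{both} $\partial D_1$ and $\partial D_2$ and therefore ``bridges the gap'') has uniformly bounded gradient by Lemma \ref{lemma23}, and so does $v_3$. Moving the derivative onto these good factors is the whole trick.

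Concretely, for $a_{11}^{\alpha\beta}+a_{21}^{\alpha\beta}$ I would write this quantity as $\int_\Omega(\mathbb{C}^0 e(v_1^\alpha+v_2^\alpha),e(v_1^\beta))\,dx$ and apply \eqref{eu} with $u=v_1^\alpha+v_2^\alpha$ and $v=v_1^\beta$. The volume term vanishes since $\mathcal{L}_{\lambda,\mu}(v_1^\alpha+v_2^\alpha)=0$ in $\Omega$, and among the three pieces of $\partial\Omega$ only $\partial D_1$ contributes (since $v_1^\beta=\psi_\beta$ there and vanishes on $\partial D_2\cup\partial D$), giving
\[
a_{11}^{\alpha\beta}+a_{21}^{\alpha\beta}=\int_{\partial D_1}\frac{\partial(v_1^\alpha+v_2^\alpha)}{\partial\nu_0}\Big|_+\!\cdot\,\psi_\beta\,dS.
\]
The bound $|\nabla(v_1^\alpha+v_2^\alpha)|\leq C\|\varphi\|_{C^{2}(\partial D)}$ from Lemma \ref{lemma23}, combined with the smoothness of $\psi_\beta$ and boundedness of $\partial D_1$, then yields the first claim. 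The identical maneuver with $u=v_3$ produces
\[
b_1^\beta=-\int_{\partial D_1}\frac{\partial v_3}{\partial\nu_0}\Big|_+\!\cdot\,\psi_\beta\,dS,
\]
and $|\nabla v_3|\leq C\|\varphi\|_{C^{2}(\partial D)}$ (again Lemma \ref{lemma23}) gives $|b_1^\beta|\leq C\|\varphi\|_{C^{2}(\partial D)}$.

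Finally, for $|p|\leq C\|\varphi\|_{C^{2}(\partial D)}$ where $p=b_1-(a_{11}+a_{21})C_2$, the two estimates above reduce the task to a standard a priori bound $|C_2^\alpha|\leq C\|\varphi\|_{C^{2}(\partial D)}$. Such a bound for the rigid-motion coefficients is classical: testing the energy functional $I_\infty$ against a convenient extension of $\varphi$ produces an $L^2$ control on $e(u)$, and the decomposition \eqref{decom_u} together with the away-from-neck bounds \eqref{v1--bounded12} then extracts $|C_i^\alpha|$.

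The main obstacle is notational rather than analytical: I need to keep track of the meaning of $|_+$ and the orientation of $\partial D_1$ (which is an \emph{inner} boundary of $\Omega$) when applying \eqref{eu}, and to verify that the a priori bound on $|C_2|$ used in the last step is compatible with the sign conventions in the linear system \eqref{system}. The integration-by-parts direction is essentially forced: integrating the other way would leave the derivative on $v_1^\beta$, and the boundary integral would be uncontrollable because $|\nabla v_1^\beta|$ is of order $\varepsilon^{-1/2}$ on $\partial D_1$ near $\overline{\Sigma'}$.
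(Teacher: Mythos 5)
Your proposal is correct and takes essentially the same approach as the paper: the quantities $a_{11}^{\alpha\beta}+a_{21}^{\alpha\beta}$ and $b_1^\beta$ are by definition already the boundary integrals over $\partial D_1$ that you reach via \eqref{eu}, and both arguments then conclude from the uniform gradient bounds on $v_1^\alpha+v_2^\alpha$ and $v_3$ in Lemma \ref{lemma23}. For the final step the paper likewise invokes the a priori bound $|C_i^\alpha|\le C$ (obtained by the trace theorem and an adaptation of Lemma 4.1 of \cite{bll1}), matching your energy-argument sketch.
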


\begin{proof}
By definition and \eqref{nabla_v3}, we have
\begin{align*}
|b_1^{\beta}|&=\left|\int_{\partial{D}_{1}}\frac{\partial v_{3}}{\partial \nu_0}\large\Big|_{+}\cdot\psi_{\beta}\right|\leq C\|\varphi\|_{C^2(\partial D)}.
\end{align*}
Similarly, it follows from \eqref{v1+v2_bounded1} that
\begin{align*}
|a_{11}^{\alpha\beta}+a_{21}^{\alpha\beta}|&=\left|-\int_{\partial{D}_{1}}\frac{\partial v_{1}^{\alpha}}{\partial \nu_0}\large\Big|_{+}\cdot\psi_{\beta}-\int_{\partial{D}_{1}}\frac{\partial v_{2}^{\alpha}}{\partial \nu_0}\large\Big|_{+}\cdot\psi_{\beta}\right|\\
&=\left|\int_{\partial{D}_{1}}\frac{\partial (v_{1}^{\alpha}+ v_{2}^{\alpha})}{\partial \nu_0}\large\Big|_{+}\cdot\psi_{\beta}\right|\\
&\leq C\|\varphi\|_{C^{2}(\partial D)}.
\end{align*}

On the other hand, by trace theorem and an adaption of the proof of Lemma 4.1 in \cite{bll1}, with minor modification, we can obtain that
\begin{equation*}
|C_i^\alpha|\leq C, \quad i=1,2,\quad \alpha=1,2,3;
\end{equation*}
Combining these estimates yields \eqref{p-c}.
\end{proof}

\begin{lemma}\label{lem_a11}
For $d=2$, we have
\begin{align}
\frac{1}{C}\left(\frac{|\Sigma'|}{\varepsilon}+\frac{1}{\sqrt{\varepsilon}}\right)\leq a_{11}^{\alpha\alpha}
&\leq C\left(\frac{|\Sigma'|}{\varepsilon}+\frac{1}{\sqrt{\varepsilon}}\right),\quad \alpha=1, 2;\label{lem34-1}\\
\frac{1}{C}\left(\frac{|\Sigma'|^{3}}{\varepsilon}+1\right)\leq|a_{11}^{33}|&\leq C\left(\frac{|\Sigma'|^{3}}{\varepsilon}+1\right)\label{lem34-2};
\end{align}
\begin{align}\label{lem34-3}
|a_{11}^{12}|=|a_{11}^{21}|&\leq C\left(\frac{|\Sigma'|}{\sqrt{\varepsilon}}+|\log\varepsilon|\right);
\end{align}
\begin{align}\label{lem34-4}
|a_{11}^{13}|=|a_{11}^{31}|&\leq C\left(\frac{|\Sigma'|^{2}}{\sqrt{\varepsilon}}+1\right);
\end{align}
\begin{align}\label{lem34-5}
|a_{11}^{23}|=|a_{11}^{32}|&\leq C\left(\frac{|\Sigma'|^{2}}{\sqrt{\varepsilon}}+1\right).
\end{align}
Consequently,
\begin{equation*}
\frac{1}{C}\left(\frac{|\Sigma'|}{\varepsilon}+\frac{1}{\sqrt{\varepsilon}}\right)^{2}\left(\frac{|\Sigma'|^{3}}{\varepsilon}+1\right)\leq\det a_{11}\leq\,C\left(\frac{|\Sigma'|}{\varepsilon}+\frac{1}{\sqrt{\varepsilon}}\right)^{2}\left(\frac{|\Sigma'|^{3}}{\varepsilon}+1\right).
\end{equation*}
\end{lemma}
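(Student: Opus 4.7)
The plan is to reduce each entry $a_{11}^{\alpha\beta}$ to an explicit integral over $\Omega_{R}$ plus a controllable error, and then split $\Omega_{R}$ into the flat slab $\Sigma'\times(0,\varepsilon)$ and its curved complement. Since $v_{1}^{\beta}-\tilde{v}_{1}^{\beta}\in H_{0}^{1}(\Omega;\mathbb{R}^{2})$ and $\mathcal{L}_{\lambda,\mu}v_{1}^{\alpha}=0$ in $\Omega$, integration by parts yields
\begin{equation*}
a_{11}^{\alpha\beta}=\int_{\Omega}(\mathbb{C}^{0}e(\tilde{v}_{1}^{\alpha}),e(\tilde{v}_{1}^{\beta}))\,dx+\int_{\Omega}(\mathbb{C}^{0}e(\tilde{v}_{1}^{\alpha}),e(v_{1}^{\beta}-\tilde{v}_{1}^{\beta}))\,dx.
\end{equation*}
On the flat slab $\bar{v}=x_{2}/\varepsilon$, so $\partial_{x_{1}}\bar{v}\equiv 0$; on the curved part $\delta(x_{1})\simeq\varepsilon+d^{2}(x_{1})$ and $|\partial_{x_{1}}h_{i}(x_{1})|\leq Cd(x_{1})$ by \eqref{h1h20} and \eqref{h1h3}. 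The contribution from $\Omega\setminus\Omega_{R}$ is uniformly $O(1)$ by \eqref{v1--bounded12}.

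\textbf{Main-term computation.}
Using $\tilde{v}_{1}^{1}=(\bar{v},0)^{T}$, $\tilde{v}_{1}^{2}=(0,\bar{v})^{T}$, $\tilde{v}_{1}^{3}=\bar{v}(-x_{2},x_{1})^{T}$ and $(\mathbb{C}^{0}A,B)=\lambda\,\mathrm{tr}(A)\mathrm{tr}(B)+2\mu\,A:B$, the diagonal cases $\alpha=\beta\in\{1,2\}$ are driven by $(\partial_{x_{2}}\bar{v})^{2}=\delta^{-2}$; after integrating in $x_{2}$ this reduces to $\int\delta(x_{1})^{-1}\,dx_{1}$, giving $|\Sigma'|/\varepsilon$ on the slab and $\int_{0}^{R}(\varepsilon+t^{2})^{-1}\,dt\asymp 1/\sqrt{\varepsilon}$ on the curved part. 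The case $\alpha=\beta=3$ replaces $\delta^{-2}$ by $x_{1}^{2}/\delta^{2}$, giving $|\Sigma'|^{3}/\varepsilon+O(1)$. For the off-diagonal main terms the decisive algebraic observation is that on the flat slab $(\mathbb{C}^{0}e(\tilde{v}_{1}^{1}),e(\tilde{v}_{1}^{2}))\equiv 0$ (since $\mathrm{tr}\,e(\tilde{v}_{1}^{1})=0$ and the $(1,2)$-entry of $e(\tilde{v}_{1}^{2})$ vanishes there), $(\mathbb{C}^{0}e(\tilde{v}_{1}^{1}),e(\tilde{v}_{1}^{3}))=-\mu x_{2}/\varepsilon^{2}$ integrates to $O(|\Sigma'|)=O(1)$, and $(\mathbb{C}^{0}e(\tilde{v}_{1}^{2}),e(\tilde{v}_{1}^{3}))=(\lambda+2\mu)x_{1}/\varepsilon^{2}$ is odd in $x_{1}$, vanishing by the symmetry of $\Sigma'$ about the origin. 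The curved-part main contributions then give $|\log\varepsilon|$ for $a_{11}^{12}$ (via $\int d(x_{1})/(\varepsilon+d^{2})\,dx_{1}$) and $O(1)$ for $a_{11}^{13},a_{11}^{23}$ (via $\int|x_{1}|d(x_{1})/(\varepsilon+d^{2})\,dx_{1}$).

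\textbf{Error term, lower bounds, and determinant.}
The pointwise Theorem~\ref{thm2.1} bounds $|\nabla(v_{1}^{\alpha}-\tilde{v}_{1}^{\alpha})|\leq C/\sqrt{\varepsilon+d^{2}(x_{1})}$ for $\alpha=1,2$ and $\leq C|x_{1}|/\sqrt{\varepsilon+d^{2}(x_{1})}$ for $\alpha=3$ reduce on the flat slab to $C/\sqrt{\varepsilon}$ and $C|x_{1}|/\sqrt{\varepsilon}$ respectively, while $|\mathbb{C}^{0}e(\tilde{v}_{1}^{\alpha})|\asymp 1/\varepsilon$ there. Multiplying pointwise and integrating over $\Sigma'\times(0,\varepsilon)$ of volume $|\Sigma'|\varepsilon$ produces the $|\Sigma'|/\sqrt{\varepsilon}$ contribution in \eqref{lem34-3} and, via the extra factor $\int_{\Sigma'}|x_{1}|\,dx_{1}\asymp|\Sigma'|^{2}$, the $|\Sigma'|^{2}/\sqrt{\varepsilon}$ contribution in \eqref{lem34-4}--\eqref{lem34-5}; the curved part only adds the subleading $O(|\log\varepsilon|)$ or $O(1)$. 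The matching lower bounds in \eqref{lem34-1}--\eqref{lem34-2} follow from the ellipticity inequality $(\mathbb{C}^{0}e(v_{1}^{\alpha}),e(v_{1}^{\alpha}))\geq c|e(v_{1}^{\alpha})|^{2}$ combined with the pointwise lower bound on $|\nabla v_{1}^{\alpha}|$ from Corollary~\ref{cor312}, integrated over the same slab decomposition. The determinant bound then follows by multiplying the three diagonal estimates, once one checks that in each asymptotic regime of $|\Sigma'|$ versus $\varepsilon$ the triple product $(|\Sigma'|/\varepsilon+1/\sqrt{\varepsilon})^{2}(|\Sigma'|^{3}/\varepsilon+1)$ dominates every off-diagonal triple appearing in the cofactor expansion.

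\textbf{Main obstacle.}
The most delicate point is the sharp extraction of the flat-slab part of the error term: a naive Cauchy--Schwarz bounds it by $\sqrt{|\Sigma'|/\varepsilon+1/\sqrt{\varepsilon}}$, which exceeds the target $|\Sigma'|/\sqrt{\varepsilon}$ whenever $|\Sigma'|<1$. The remedy is to avoid Cauchy--Schwarz inside $\Sigma'\times(0,\varepsilon)$ altogether and integrate the bilinear form directly, exploiting that $\mathbb{C}^{0}e(\tilde{v}_{1}^{\alpha})$ has explicit constant-in-$x$ entries of size $1/\varepsilon$ on the slab and that Theorem~\ref{thm2.1} provides an $L^{\infty}$ (rather than merely $L^{2}$) control of $\nabla(v_{1}^{\beta}-\tilde{v}_{1}^{\beta})$.
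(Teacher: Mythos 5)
Your route is genuinely different from the paper's: you compute the volume bilinear form $a_{11}^{\alpha\beta}=\int_{\Omega}(\mathbb{C}^{0}e(\tilde{v}_{1}^{\alpha}),e(\tilde{v}_{1}^{\beta}))+\int_{\Omega}(\mathbb{C}^{0}e(\tilde{v}_{1}^{\alpha}),e(v_{1}^{\beta}-\tilde{v}_{1}^{\beta}))$, whereas the paper keeps the diagonal entries as volume energies but converts every off-diagonal entry into the surface flux $-\int_{\partial D_{1}}\frac{\partial v_{1}^{\alpha}}{\partial\nu_{0}}\big|_{+}\cdot\psi_{\beta}$ and estimates it term by term using the normal-vector smallness $|n_{1}|\leq Cd(x_{1})$ and the $L^{\infty}$ bounds of Corollary~\ref{cor312}. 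Your decomposition is legitimate, your identification of the flat-slab cancellations is correct, and your "main obstacle" remark (avoid Cauchy--Schwarz on the slab, use the $L^{\infty}$ control of $\nabla(v-\tilde{v})$) is exactly the right point.

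However, there is a concrete gap in your treatment of \eqref{lem34-5}. On the curved region the main term $(\mathbb{C}^{0}e(\tilde{v}_{1}^{2}),e(\tilde{v}_{1}^{3}))$ contains the piece $(\lambda+2\mu)\,\partial_{x_{2}}\bar{v}\cdot\partial_{x_{2}}(x_{1}\bar{v})=(\lambda+2\mu)\,x_{1}/\delta^{2}$, which carries no factor of $d(x_{1})$ and no factor of $\partial_{x_{1}}\bar{v}$. After integrating in $x_{2}$ it produces $(\lambda+2\mu)\int_{B_{R}'\setminus\Sigma'}\frac{x_{1}}{\varepsilon+h_{1}(x_{1})-h_{2}(x_{1})}\,dx_{1}$, whose absolute integrand is of size $\frac{R_{0}}{\varepsilon+d^{2}(x_{1})}$ near $x_{1}=\pm R_{0}$ and hence a priori of order $|\Sigma'|/\sqrt{\varepsilon}$, not $O(1)$; it is \emph{not} dominated by $\int|x_{1}|\,d(x_{1})/(\varepsilon+d^{2}(x_{1}))\,dx_{1}$ as you claim. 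Since $h_{1}-h_{2}$ is only known to vanish to second order at $\pm R_{0}$ (with possibly different Hessians there), the near-oddness of this integrand must be extracted explicitly; this is precisely the paper's term $\mathrm{II}_{23}^{2(2)}$, handled by Taylor-expanding $h_{1}-h_{2}$ at $x_{1}=\pm R_{0}$ and pairing the two edge contributions. Without this step your bound for $a_{11}^{23}$ degrades to $C(|\Sigma'|/\sqrt{\varepsilon}+\cdots)$, which is weaker than \eqref{lem34-5} in the regime where $|\Sigma'|$ is small and would in turn weaken \eqref{c-c3}. A secondary imprecision: for the lower bounds you invoke ellipticity together with the pointwise lower bound on $|\nabla v_{1}^{\alpha}|$, but $(\mathbb{C}^{0}e(v),e(v))\geq c|e(v)|^{2}$ does not control $|\nabla v|^{2}$ pointwise (rigid motions). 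You need either the paper's variational argument (the linear-in-$x_{2}$ interpolant minimizes $\int_{h_{2}}^{\varepsilon+h_{1}}|\partial_{x_{2}}\cdot|^{2}dx_{2}$ subject to the trace constraint, giving $\geq 1/\delta$), or the component-wise fact that $|\partial_{x_{2}}(v_{1}^{1})^{1}|\geq\frac{1}{C\delta}$ while $|\partial_{x_{1}}(v_{1}^{1})^{2}|\leq C/\sqrt{\delta}$, so that $|e_{12}(v_{1}^{1})|\geq\frac{1}{C\delta}$ for small $\varepsilon$.
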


Here we remark that if $\Sigma'=\{0'\}$, then these estimates has been obtained in \cite{bll1}, except \eqref{lem34-3} becoming better a little bit. In order to stress the important role of $|\Sigma'|$ in the blow-up analysis of $|\nabla u|$, we first use Lemma \ref{lem_a11} to solve $|C_{1}^{\alpha}-C_{2}^{\alpha}|$, $\alpha=1,2,3$, and therefore obtain the estimate of $|\nabla u|$. The proof of Lemma \ref{lem_a11} is left to Subsection \ref{subsec34} below. The following proposition is important, especially the estimate \eqref{c-c3} for $|C_{1}^{3}-C_{2}^{3}|$.

\begin{prop}\label{prop1}
Let $C_i^\alpha$ be defined in \eqref{decom_u}, $i=1,2$, $\alpha=1,2,3$. Then
\begin{align}\label{c-c}
|C_{1}^{\alpha}-C_{2}^{\alpha}|\leq \frac{C\varepsilon}{|\Sigma'|+\sqrt{\varepsilon}}\|\varphi\|_{C^{2}(\partial D)},\quad\alpha=1,2,
\end{align}
and
\begin{align}\label{c-c3}
|C_{1}^{3}-C_{2}^{3}|\leq \frac{C\varepsilon}{|\Sigma'|^3+\varepsilon}\|\varphi\|_{C^{2}(\partial D)}.
\end{align}
\end{prop}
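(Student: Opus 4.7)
\textbf{Proof plan for Proposition \ref{prop1}.} The starting point is the linear system \eqref{p}, namely $a_{11}(C_1-C_2)=p$, for which Lemma \ref{lemma p} supplies the bound $|p|\le C\|\varphi\|_{C^2(\partial D)}$ and Lemma \ref{lem_a11} supplies matching two-sided bounds on every entry of the $3\times3$ symmetric matrix $a_{11}$. The plan is simply to invert $a_{11}$ by Cramer's rule and then bookkeep the dominant terms carefully, using the key fact that Lemma \ref{lem_a11} gives $\det a_{11}\asymp \bigl(\tfrac{|\Sigma'|}{\varepsilon}+\tfrac{1}{\sqrt{\varepsilon}}\bigr)^2\bigl(\tfrac{|\Sigma'|^3}{\varepsilon}+1\bigr)$ from below, which is what ultimately produces the factors $\frac{\varepsilon}{|\Sigma'|+\sqrt{\varepsilon}}$ and $\frac{\varepsilon}{|\Sigma'|^3+\varepsilon}$ on the right-hand side.

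First I would write Cramer's rule for each $\alpha$: $C_1^\alpha-C_2^\alpha=\det M_\alpha/\det a_{11}$, where $M_\alpha$ is obtained from $a_{11}$ by replacing its $\alpha$-th column with $p$. For $\alpha=1$, the cofactor expansion gives $\det M_1=p^1\bigl(a_{11}^{22}a_{11}^{33}-(a_{11}^{23})^2\bigr)-p^2\bigl(a_{11}^{12}a_{11}^{33}-a_{11}^{23}a_{11}^{13}\bigr)+p^3\bigl(a_{11}^{12}a_{11}^{23}-a_{11}^{22}a_{11}^{13}\bigr)$. Plugging in the bounds from Lemma \ref{lem_a11} the leading term is $a_{11}^{22}a_{11}^{33}\asymp\bigl(\tfrac{|\Sigma'|}{\varepsilon}+\tfrac{1}{\sqrt{\varepsilon}}\bigr)\bigl(\tfrac{|\Sigma'|^3}{\varepsilon}+1\bigr)$, and checking term by term the cross contributions $a_{11}^{12}a_{11}^{33}$, $a_{11}^{23}a_{11}^{13}$, $a_{11}^{22}a_{11}^{13}$, $a_{11}^{12}a_{11}^{23}$ are all dominated by this, so $|\det M_1|\le C\|\varphi\|_{C^2(\partial D)}\bigl(\tfrac{|\Sigma'|}{\varepsilon}+\tfrac{1}{\sqrt{\varepsilon}}\bigr)\bigl(\tfrac{|\Sigma'|^3}{\varepsilon}+1\bigr)$. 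Dividing by the lower bound on $\det a_{11}$ yields precisely $\frac{\varepsilon}{|\Sigma'|+\sqrt{\varepsilon}}\|\varphi\|_{C^2(\partial D)}$. The case $\alpha=2$ is identical after swapping indices $1\leftrightarrow 2$.

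For $\alpha=3$, expanding along the third column gives $\det M_3=p^1\bigl(a_{11}^{12}a_{11}^{23}-a_{11}^{22}a_{11}^{13}\bigr)-p^2\bigl(a_{11}^{11}a_{11}^{23}-a_{11}^{12}a_{11}^{13}\bigr)+p^3\bigl(a_{11}^{11}a_{11}^{22}-(a_{11}^{12})^2\bigr)$. The last term is the principal one: using Lemma \ref{lem_a11} one has $a_{11}^{11}a_{11}^{22}\asymp\bigl(\tfrac{|\Sigma'|}{\varepsilon}+\tfrac{1}{\sqrt{\varepsilon}}\bigr)^2$ while $(a_{11}^{12})^2\le C\bigl(\tfrac{|\Sigma'|}{\sqrt{\varepsilon}}+|\log\varepsilon|\bigr)^2$, which is strictly smaller (no cancellation in the upper bound), and a direct check shows the first two terms are likewise controlled by $\bigl(\tfrac{|\Sigma'|}{\varepsilon}+\tfrac{1}{\sqrt{\varepsilon}}\bigr)^2$. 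Hence $|\det M_3|\le C\|\varphi\|_{C^2(\partial D)}\bigl(\tfrac{|\Sigma'|}{\varepsilon}+\tfrac{1}{\sqrt{\varepsilon}}\bigr)^2$, and dividing by the lower bound on $\det a_{11}$ gives $|C_1^3-C_2^3|\le \frac{C}{(|\Sigma'|^3/\varepsilon)+1}\|\varphi\|_{C^2(\partial D)}=\frac{C\varepsilon}{|\Sigma'|^3+\varepsilon}\|\varphi\|_{C^2(\partial D)}$, which is \eqref{c-c3}.

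The main obstacle is not conceptual but bookkeeping: the bounds in Lemma \ref{lem_a11} have two competing regimes ($|\Sigma'|/\varepsilon$ versus $1/\sqrt{\varepsilon}$, and $|\Sigma'|^3/\varepsilon$ versus $1$), so at every step one must verify that the off-diagonal products really are subdominant in each regime (both the small-$|\Sigma'|$ regime where $|\Sigma'|\le\sqrt{\varepsilon}$ and the large-$|\Sigma'|$ regime where $|\Sigma'|\ge\sqrt{\varepsilon}$). In particular, the third column estimate \eqref{c-c3}, which is the sharper one and is the crucial new input needed later for the boundedness of $|\nabla u|$, relies on the fact that in the $\alpha=3$ cofactor one loses only one power of $a_{11}^{33}$ compared with the full determinant, and that no cancellation occurs in the $2\times 2$ upper block because its entries are bounded from below (and not just above) by Lemma \ref{lem_a11}. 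After verifying these dominances, the two estimates \eqref{c-c} and \eqref{c-c3} follow directly.
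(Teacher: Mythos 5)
Your proposal is correct and follows essentially the same route as the paper: solve $a_{11}(C_1-C_2)=p$ by Cramer's rule, bound each $2\times2$ cofactor using the entrywise estimates of Lemma \ref{lem_a11} together with $|p|\le C\|\varphi\|_{C^{2}(\partial D)}$, and divide by the lower bound on $\det a_{11}$; your dominance checks (e.g. $|a_{11}^{12}|\lesssim a_{11}^{11}$, $|a_{11}^{13}|\lesssim a_{11}^{33}$ in both regimes $|\Sigma'|\lessgtr\sqrt{\varepsilon}$) are exactly the bookkeeping the paper leaves implicit. The only cosmetic remark is that for the upper bound on $\det M_3$ you need only the upper bounds on the minors, not the absence of cancellation, which matters only for the lower bound on $\det a_{11}$ already supplied by Lemma \ref{lem_a11}.
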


\begin{proof}
From \eqref{p}, we have
\begin{equation}\label{X_1}
\begin{pmatrix}
a_{11}^{11}&a_{11}^{12}&a_{11}^{13}\\ \\
a_{11}^{21}&a_{11}^{22}&a_{11}^{23}\\\\
a_{11}^{31}&a_{11}^{32}&a_{11}^{33}
\end{pmatrix}
\begin{pmatrix}
C_1^1-C_2^1\\\\
 C_1^2-C_2^2\\\\
C_1^3-C_2^3\end{pmatrix}
=\begin{pmatrix}
p_1\\\\
p_2\\\\
p_3
\end{pmatrix}.
\end{equation}
By Cramer's rule, we see  from \eqref{X_1},
\begin{gather*}C_1^1-C_2^1=\frac{1}{\det a_{11}}
\begin{vmatrix} p_1&a_{11}^{12}&a_{11}^{13}\\  p_2&a_{11}^{22}&a_{11}^{23}\\ p_3&a_{11}^{32}&a_{11}^{33}
\end{vmatrix},\quad
C_1^2-C_2^2=\frac{1}{\det a_{11}}
\begin{vmatrix} a_{11}^{11}&p_1&a_{11}^{13}\\  a_{11}^{21}&p_2&a_{11}^{23}\\ a_{11}^{31}&p_3&a_{11}^{33}
\end{vmatrix},
\end{gather*}
and
$$C_1^3-C_2^3=\frac{1}{\det a_{11}}
\begin{vmatrix} a_{11}^{11}&a_{11}^{12}&p_{1}\\  a_{21}&a_{22}&p_{2}\\ a_{11}^{31}&a_{11}^{32}&p_{3}
\end{vmatrix}.$$
That is,
\begin{gather*}\label{c1}
C_1^1-C_2^1=\frac{1}{\det a_{11}}
\left(p_1\begin{vmatrix} a_{11}^{22}&a_{11}^{23}\\  a_{11}^{32}&a_{11}^{33}\\  \end{vmatrix}-p_2\begin{vmatrix} a_{11}^{12}&a_{11}^{13}\\  a_{11}^{32}&a_{33}\\  \end{vmatrix}+p_3\begin{vmatrix} a_{11}^{12}&a_{11}^{13}\\ a_{11}^{22}&a_{11}^{23}\\  \end{vmatrix}\right),
\end{gather*}

\begin{gather*}\label{c2}
C_1^2-C_2^2=\frac{1}{\det a_{11}}
\left(-p_1\begin{vmatrix} a_{11}^{21}&a_{11}^{23}\\  a_{11}^{31}&a_{11}^{33}\\  \end{vmatrix}+p_2\begin{vmatrix} a_{11}^{11}&a_{11}^{13}\\  a_{11}^{31}&a_{11}^{33}\\  \end{vmatrix}-p_3\begin{vmatrix} a_{11}^{11}&a_{11}^{13}\\ a_{11}^{21}&a_{11}^{23}\\  
\end{vmatrix}\right),
\end{gather*}

\begin{gather*}\label{c3}
C_1^3-C_2^3=\frac{1}{\det a_{11}}
\left(p_1\begin{vmatrix} a_{11}^{21}&a_{11}^{22}\\  a_{11}^{31}&a_{11}^{32}\\  \end{vmatrix}-p_2\begin{vmatrix} a_{11}^{11}&a_{11}^{12}\\  a_{11}^{31}&a_{11}^{32}\\  \end{vmatrix}+p_3\begin{vmatrix} a_{11}^{11}&a_{11}^{12}\\ a_{11}^{21}&a_{11}^{22}\\  \end{vmatrix}\right).
\end{gather*}
Put the estimates obtained in Lemma \ref{lem_a11} and \eqref{p-c} into these formulae, we obtain \eqref{c-c} and \eqref{c-c3}.
\end{proof}

\begin{remark}
(i) If $\Sigma'=\{0'\}$, then $|\Sigma'|=0$, thus we have
\begin{align*}
|C_{1}^{\alpha}-C_{2}^{\alpha}|\leq C\sqrt{\varepsilon}\|\varphi\|_{C^{2}(\partial D)}.
\end{align*}
This is exactly proposition 4.2 in \cite{bll1}.

(ii) Here the estimate of $|C_{1}^{3}-C_{2}^{3}|$ is new and necessary.
\end{remark}

With the lemma and proposition established we turn to the proof of Theorem \ref{thm1}.

\subsection{Completion of the proof of Theorem \ref{thm1}}

\begin{proof}[Proof of Theorem \ref{thm1}]
Using  Corollary \ref{cor312}, Lemma \ref{lemma23}, and Proposition \ref{prop1}, we obtain for $x\in\Omega_{R}$,
\begin{align*}
|\nabla{u}(x)|&
\leq\sum_{\alpha=1}^{3}\left|(C_{1}^\alpha-C_{2}^\alpha)\nabla{v}_{1}^\alpha(x)\right|+\sum_{\alpha=1}^{3}\left|C_{2}^\alpha\nabla({v}_{1}^\alpha+{v}_{2}^\alpha)(x)\right|+\left|\nabla{v}_{3}(x)\right|\\
&
\leq\sum_{\alpha=1}^{2}\left|(C_{1}^\alpha-C_{2}^\alpha)\nabla{v}_{1}^\alpha(x)\right|+\left|C_{1}^3-C_{2}^3\right|\left|\nabla{v}_{1}^3(x)\right|+C\|\varphi\|_{C^2(\partial D)}\\
&\leq C\left(\frac{\varepsilon}{|\Sigma'|+\sqrt{\varepsilon}}\frac{1}{\varepsilon+d^{2}(x_{1})}+\frac{\varepsilon}{|\Sigma'|^3+\varepsilon}\frac{|x_{1}|}{\varepsilon+d^{2}(x_{1})}\right)\|\varphi\|_{C^2(\partial D)}+C\|\varphi\|_{C^2(\partial D)}.
\end{align*}
By the interior and boundary estimates for elliptic systems, see \cite{AD1,AD2}, we have \eqref{thm1-2}. 
This completes the proof of Theorem \ref{thm1}.
\end{proof}

\subsection{Proof of Lemma \ref{lem_a11}}\label{subsec34}

We now give the proof of Lemma \ref{lem_a11} and pay attention to the role of $|\Sigma'|>0$.

\begin{proof}[Proof of Lemma \ref{lem_a11}]

{\bf Step 1.} Estimates of $a_{11}^{\alpha\alpha}$, $\alpha=1,2$.

We only estimate the case $\alpha=1$ for instance, since the case $\alpha=2$ is the same. In view of  (\ref{ellip}), for $\alpha=1$,
\begin{equation*}
a_{11}^{11}=\int_{\Omega}(\mathbb{C}^0e(v_{1}^{1}), e(v_{1}^{1}))dx\leq C\int_{\Omega}|\nabla v_{1}^{1}|^2dx.
\end{equation*}
We decompose the integral on the right hand side into three parts,
\begin{align}\label{a11'}
\int_{\Omega}|\nabla v_{1}^{1}|^2=\int_{\Sigma}|\nabla v_{1}^{1}|^{2}+\int_{\Omega_{R}\setminus\Sigma}|\nabla v_{1}^{1}|^{2}+\int_{\Omega\setminus \Omega_{R}}|\nabla v_{1}^{1}|^{2}.
\end{align}
For the first integral on the right hand side, by using (\ref{v1-bounded12}), we have
\begin{align}\label{first_term}
\frac{|\Sigma'|}{C\varepsilon}\leq\int_{\Sigma'}\int_{0}^{\varepsilon}\frac{1}{C\varepsilon^2}dx_2dx_{1}\leq\int_{\Sigma}|\nabla v_{1}^{1}|^{2}\leq\int_{\Sigma'}\int_{0}^{\varepsilon}\frac{C}{\varepsilon^2}dx_2dx_{1}\leq\frac{C|\Sigma'|}{\varepsilon}.
\end{align}
The last integral of \eqref{a11'} is finite,
\begin{align}\label{3-term}
\int_{\Omega\setminus \Omega_{R}}|\nabla v_{1}^{1}|^{2}\leq C,
\end{align}
since clearly from (\ref{v1--bounded12}). For the middle term of \eqref{a11'}, recalling that $\Sigma'=(-R_{0},R_{0})$ in dimension two, and $d(x_{1})=|x_{1}|-R_{0}$ on $\Omega_{R}\setminus\Sigma$, we have
\begin{align*}
\int_{\Omega_{R}\setminus\Sigma}|\nabla v_{1}^{1}|^{2}&\leq C\int_{\Omega_{R}\setminus\Sigma}\frac{dx}{(\varepsilon+d^2(x_{1}))^2}\\
&\leq C\int_{B'_{R}\setminus\Sigma'}\frac{dx_{1}}{\varepsilon+d^2(x_{1})}\\
&= C\int_{R_0}^{R}\frac{dr}{\varepsilon+(r-R_0)^2}\\
&=C\int_{0}^{R-R_0}\frac{dr}{\varepsilon+r^2}\\
&\leq \frac{C}{\sqrt{\varepsilon}},
\end{align*}
which, together with \eqref{first_term} and \eqref{3-term}, implies that
\begin{align*}
a_{11}^{11}\leq C\left(\frac{|\Sigma'|}{\varepsilon}+\frac{1}{\sqrt{\varepsilon}}\right).
\end{align*}

On the other hand, to obtain the lower bound, we argue as follows. First,
\begin{align*}
a_{11}^{11}&=\int_{\Omega}(\mathbb{C}^0e(v_{1}^{1}), e(v_{1}^{1}))dx\geq \frac{1}{C}\int_{\Omega}|e(v_{1}^{1})|^2dx\geq \frac{1}{C}\int_{\Omega_R}|\partial_{x_2}(v_{1}^{1})^{1}|^2dx.
\end{align*}
Notice that $(v_{1}^{1})^{1}|_{\partial{D}_{1}}=\bar{v}|_{\partial{D}_{1}}=1, (v_{1}^{1})^{1}|_{\partial D_2}
=\bar{v}|_{\partial D_2}=0$. By the definition of $\bar{v}$, the linearity of $\bar{v}(x_{1},x_2)$ in $x_2$ for any fixed $x_{1}$ clearly implies that $\bar{v}(x_{1}, \cdot)$ is harmonic. Hence its energy is minimal, that is
\begin{align*}
\int_{h_2(x_{1})}^{h_1(x_{1})+{\varepsilon}}|\partial_{x_2}(v_{1}^{1})^{1}|^2dx_2\geq
\int_{h_2(x_{1})}^{h_1(x_{1})+{\varepsilon}}|\partial_{x_2}\bar{v}|^2dx_2=\frac{1}{\varepsilon+h_1(x_{1})-h_2(x_{1})}.
\end{align*}
Now integrating from $-R$ to $R$ for $x_{1}$, we obtain
\begin{align*}
\int_{\Omega_R}|\partial_{x_2}(v_{1}^{1})^{1}|^2dx&=\int_{-R}^{R}\int_{h_2(x_{1})}^{h_1(x_{1})+\varepsilon}|\partial_{x_2}(v_{1}^{1})^{1}|^2dx_2dx_{1}\\
&\geq\int_{{B'_{R}\setminus\ B'_{R_0}}}\int_{h_2(x_{1})}^{h_1(x_{1})+\varepsilon}|\partial_{x_2}\bar{v}|^2dx_2dx_{1}+\int_{-R_{0}}^{R_{0}}\int_0^\varepsilon|\partial_{x_2}\bar{v}|^2dx_2dx_{1}\\
&\geq\frac{1}{C}\int_{{R_0}<|x_{1}|<R}\frac{dx_{1}}{\varepsilon+(|x_{1}|-R_0)^2}+\int_{-R_{0}}^{R_{0}}\int_0^\varepsilon \frac{1}{C\varepsilon^2}dx_2dx_{1}\\
&\geq\frac{1}{C}\left(\frac{|\Sigma'|}{\varepsilon}+\frac{1}{\sqrt{\varepsilon}}\right),
\end{align*}
So we have \eqref{lem34-1}.

{\bf Step 2.} Estimate of $a_{11}^{33}$.
\begin{align*}
|a_{11}^{33}|&=\left|\int_{\Omega}(\mathbb{C}^0e(v_{1}^{3}), e(v_{1}^{3}))dx\right|\leq C\int_{\Omega}|\nabla v_{1}^{3}|^2dx\\
&\leq\,C\int_{\Omega_R}\frac{|x_{1}|^2}{(\varepsilon+d^{2}(x_{1}))^2}dx+C\\
&\leq\,C\int_{\Sigma}\frac{|x_{1}|^2}{\varepsilon^2}dx+C\int_{\Omega_{R}\setminus \Sigma}\frac{((|x_{1}|-R_0)+R_0)^2}{(\varepsilon+(|x_{1}|-R_0)^2)^2}dx+C\\
&\leq \frac{C|\Sigma'|^{3}}{\varepsilon}+C\int_{{B'_R}\setminus\Sigma'}\frac{(|x_{1}|-R_0)^2}{\varepsilon+(|x_{1}|-R_0)^2}dx_{1}+CR_{0}^{2}\int_{{B'_R}\setminus\Sigma'}\frac{dx_{1}}{\varepsilon+(|x_{1}|-R_0)^2}+C\\
&\leq \frac{C|\Sigma'|^{3}}{\varepsilon}+\frac{C|\Sigma'|^{2}}{\sqrt{\varepsilon}}+C.
\end{align*}

On the other hand, by the reasoning used for the lower bound of $a_{11}^{11}$,
\begin{align*}
|a_{11}^{33}|&=\left|\int_{\Omega}(\mathbb{C}^0e(v_{1}^{3}), e(v_{1}^{3}))dx\right| \geq \frac{1}{C}\int_{\Omega}|e(v_{1}^{3})|^2dx\geq \frac{1}{C}\int_{\Omega_R}|\partial_{x_3}(v_{1}^{3})^2|^2dx.
\end{align*}
Notice that $(v_{1}^{3})^2|_{\partial{D}_{1}}=x_{1}\bar{v}|_{\partial{D}_{1}}=x_{1}, (v_{1}^{3})^2|_{\partial{D}_2}
=x_{1}\bar{v}|_{\partial D_2}=0$, and recalling the definition of $\bar{v}$, $x_{1}\bar{v}(x_{1},x_2)$ is linear in $x_2$ for fixed $x_1$, so $x_{1}\bar{v}(x_1, \cdot)$ is also harmonic. Hence its energy is minimal, that is,
\begin{align*}
\int_{h_2(x_{1})}^{h_1(x_{1})+{\varepsilon}}|\partial_{x_2}(v_{1}^{3})^2|^2dx_2\geq
\int_{h_2(x_{1})}^{h_1(x_{1})+{\varepsilon}}|\partial_{x_2}(x_{1}\bar{v})|^2dx_2=\frac{|x_{1}|^{2}}{\varepsilon+h_1(x_{1})-h_2(x_{1})}.
\end{align*}
Integrating on $[-R,R]$ for $x_{1}$, we obtain
\begin{align*}
\int_{\Omega_R}|\partial_{x_2}(v_{1}^{3})^2|^2dx&=\int_{-R}^{R}\int_{h_2(x_{1})}^{h_1(x_{1})+\varepsilon}|\partial_{x_2}(v_{1}^{3})^2|^2dx_2dx_{1}\\
&\geq\int_{R_{0}}^{R}\int_{h_2(x_{1})}^{h_1(x_{1})+\varepsilon}|\partial_{x_2}(v_{1}^{3})^2|^2dx_2dx_{1}+\int_{0}^{R_{0}}\int_0^\varepsilon|\partial_{x_2}(v_{1}^{3})^2|^2dx_2dx_{1}\\
&\geq\frac{1}{C}\int_{R_{0}}^{R}\frac{|x_{1}|^2}{\varepsilon+d^2(x_{1})}dx_{1}+\int_{0}^{R_{0}}\int_0^\varepsilon \frac{|x_{1}|^2}{C\varepsilon^2}dx_2dx_{1}\\
&\geq \frac{1}{C}\left(\frac{|\Sigma'|^{3}}{\varepsilon}+1\right).
\end{align*}
So, for $\varepsilon>0$ small enough, we have \eqref{lem34-2}.

{\bf Step 3.} Estimates of $a_{11}^{12}$ and $a_{11}^{21}$.

By the definition,
\begin{align*}
a_{11}^{12}&=a_{11}^{21}=-\int_{\partial{D}_{1}}\frac{\partial v_{1}^{1}}{\partial \nu_0}\large\Big|_{+}\cdot\psi_2\\
&=-\int_{\partial{D}_{1}}\lambda(\nabla\cdot v_{1}^{1})n_2+\mu\left((\nabla v_{1}^{1}+(\nabla v_{1}^{1})^T)\vec{n}\right)_2\\
&=-\int_{\partial{D}_{1}}\lambda\left(\sum_{k=1}^2\partial_{x_k} (v_{1}^{1})^{k}\right)n_2
+\mu\sum_{l=1}^2\left(\partial_{x_2}(v_{1}^{1})^{l}+\partial_{x_l}(v_{1}^{1})^{2}\right)n_l,
\end{align*}
where $$\vec{n}=\frac{(h'_{1}(x_{1}), -1)}{\sqrt{1+|h'_{1}(x_{1})|^2}}.$$
Due to (\ref{h1h20}), we have
\begin{align}\label{normal}
|n_{1}|=\left|\frac{h'_{1}(x_{1})}{\sqrt{1+|h'_{1}(x_{1})|^2}}\right|\leq Cd(x_{1}),\qquad~\mbox{and}~ |n_2|=\frac{ 1}{\sqrt{1+|h'_{1}(x_{1})|^2}}\leq1.
\end{align}

Let
$$\mathrm{I}_{12}:=\int_{\partial{D}_{1}\cap B_R}\partial_{x_{1}}(v_{1}^{1})^{1}n_2,\quad
\mathrm{II}_{12}:=\int_{\partial{D}_{1}\cap B_R} \partial_{x_2}(v_{1}^{1})^{2}n_2,
$$
and
$$
\mathrm{III}_{12}:=\int_{\partial{D}_{1}\cap B_R} \left(\partial_{x_2}(v_{1}^{1})^{1}+\partial_{x_{1}}(v_{1}^{1})^{2}\right)n_1.
$$
It follows from (\ref{v1-x'2}) and (\ref{normal}) that
\begin{align*}
|\mathrm{I}_{12}|\leq&\int_{\partial{D}_{1}\cap B_R}|\partial_{x_{1}}(v_{1}^{1})^{1}|\leq C\int_{\partial{D}_{1}\cap B_R}\frac{1}{\sqrt{\varepsilon+d^2(x_{1})}}\\
\leq&\,C\left(\int_{\Sigma'}\frac{dx_1}{\sqrt{\varepsilon}}
+\int_{{B'_R}\setminus\Sigma'}\frac{dx_1}{\sqrt{\varepsilon+d^2(x_{1})}}\right),
\end{align*}
where
\begin{equation}\label{eqn_log}
\int_{{B'_R}\setminus\Sigma'}\frac{dx_{1}}{\sqrt{\varepsilon+d^2(x_{1})}}
=\int_{R_0}^{R}\frac{1}{\sqrt{\varepsilon+(r-R_0)^2}}dr
\leq\,C|\log\varepsilon|.
\end{equation}
Thus,
\begin{align*}
|\mathrm{I}_{12}|\leq C\left(\frac{|\Sigma'|}{\sqrt{\varepsilon}}+|\log\varepsilon|\right).
\end{align*}
Recalling the definition of $\tilde{v}_{1}^{1}$, we know $(\tilde{v}_{1}^{1})^{2}=0$. So by (\ref{nabla_w_i02}), we have
\begin{align*}
|\mathrm{II}_{12}|&=\left|\int_{\partial{D}_{1}\cap B_R}\partial_{x_2}(v_{1}^{1})^{2} n_2\right|\\
&\leq\left|\int_{\partial{D}_{1}\cap B_R}\partial_{x_2}(\tilde{v}_{1}^{1})^{2}n_2\right|+\left|\int_{\partial{D}_{1}\cap B_R}\partial_{x_2}((v_{1}^{1})^{2}-(\tilde{v}_{1}^{1})^2)n_2\right|\nonumber\\
&\leq\int_{\Sigma'}\frac{dx_1}{\sqrt{\varepsilon}}+C\int_{{B'_R}\backslash\Sigma'}    \frac{dx_1}{\sqrt{\varepsilon+d^2(x_{1})}}\\
&\leq C\left(\frac{|\Sigma'|}{\sqrt{\varepsilon}}+|\log\varepsilon|\right),
\end{align*}
Similarly,
\begin{align*}
|\mathrm{III}_{12}|\leq&C\int_{\partial{D}_{1}\cap B_R}\frac{d(x_{1})}{\varepsilon+d^2(x_{1})}
\leq C\int_{{B'_R}\setminus\Sigma'}\frac{dx_{1}}{\sqrt{\varepsilon+d^2(x_{1})}}\leq C |\log\varepsilon|.
\end{align*}
Therefore, together with the estimates of $|\mathrm{I}_{12}|$ and $|\mathrm{II}_{12}|$, and the boundedness of $|\nabla{v}_{1}^{1}|$ on $\partial{D}_{1}\setminus B_R$, we obtain
\begin{align*}
|a_{11}^{12}|=|a_{11}^{21}|&\leq |\lambda||\mathrm{I}_{12}|+|\lambda+2\mu||\mathrm{II}_{12}|+|\mu||\mathrm{III}_{12}|+C\leq C\left(\frac{|\Sigma'|}{\sqrt{\varepsilon}}+|\log\varepsilon|\right).
\end{align*}

{\bf Step 4.} Estimates of $a_{11}^{13}$ and $a_{11}^{31}$.

Recalling $\psi_{3}=(-x_{2},x_{1})^{T}$, and making use of the boundedness of $|\nabla{v}_{1}^{1}|$ on $\partial{D}_{1}\setminus B_R$, we have
\begin{align*}
a_{11}^{13}=a_{11}^{31}=&-\int_{\partial{D}_{1}}\frac{\partial v_{1}^{1}}{\partial \nu_0}\large\Big|_{+}\cdot\psi_{3}\\
=&\int_{\partial{D}_{1}\cap B_R}\left(\lambda\big(\sum_{k=1}^2\partial_{x_k} (v_{1}^{1})^{k}\big)n_1
+\mu\sum_{l=1}^2\left(\partial_{x_{1}}(v_{1}^{1})^{l}+\partial_{x_l}(v_{1}^{1})^{1}\right)n_l\right)x_{2}\\
&-\int_{\partial{D}_{1}\cap B_R}\left(\lambda\big(\sum_{k=1}^2\partial_{x_k} (v_{1}^{1})^{k}\big)n_2
+\mu\sum_{l=1}^2\left(\partial_{x_2}(v_{1}^{1})^{l}+\partial_{x_l}(v_{1}^{1})^{2}\right)n_l\right)x_{1}+O(1)\\
=:&\mathrm{I}_{13}-\mathrm{II}_{13}+O(1),
\end{align*}
where, since $x_2=\varepsilon+h_1(x_{1})$, and $|x_2|\leq C(\varepsilon+d^2(x_{1}))$, it follows from \eqref{v1-bounded12} that
\begin{align*}
|\mathrm{I}_{13}|\leq\,C\int_{B'_{R}}|\nabla v_{1}^{1}||x_{2}|dx_1\leq\,C,
\end{align*}
and we divide $\mathrm{II}_{13}$ further,
\begin{align*}
\mathrm{II}_{13}:=&\,\lambda\int_{\partial{D}_{1}\cap B_R}\partial_{x_{1}}(v_{1}^{1})^{1}n_2x_{1}+(\lambda+2\mu)\int_{\partial{D}_{1}\cap B_R}\partial_{x_2}(v_{1}^{1})^{2}n_2x_{1}\\
&+\mu\int_{\partial{D}_{1}\cap B_R}\left(\partial_{x_2}(v_{1}^{1})^{2}+\partial_{x_{1}}(v_{1}^{1})^{2}\right)n_1x_{1}\\
=&:\,\lambda\mathrm{II}_{13}^1+(\lambda+2\mu)\mathrm{II}_{13}^2+\mu\mathrm{II}_{13}^3.
\end{align*}
From Corollary \ref{cor312}, and using \eqref{eqn_log} and $|x_2|\leq C(\varepsilon+d^2(x_{1}))$ again, we have
\begin{align*}
|\mathrm{II}_{13}^1|&\leq\left|\int_{\partial{D}_{1}\cap B_R}\partial_{x_{1}}(v_{1}^{1})^{1}n_2x_{1}\right|\\
&\leq\int_{\Sigma'}\frac{C|x_{1}|}{\sqrt{\varepsilon}}+\int_{B'_R\backslash\Sigma'}\frac{C|x_{1}|}{\sqrt{\varepsilon+d^2(x_{1})}}\\
&\leq\frac{C|\Sigma'|^{2}}{\sqrt{\varepsilon}}+\int_{{B'_R}\backslash\Sigma'}\frac{R_0+d(x_{1})}{\sqrt{\varepsilon+d^2(x_{1})}}\\
&\leq \frac{C|\Sigma'|^{2}}{\sqrt{\varepsilon}}+C|\Sigma'||\log\varepsilon|+C.
\end{align*}
In view of the fact that $(\tilde{v}_{1}^{1})^2=0$,
\begin{align*}
|\mathrm{II}_{13}^2|&=\left|\int_{\partial{D}_{1}\cap B_R}\partial_{x_2}(\tilde{v}_{1}^{1})^2n_2x_{1}+\int_{\partial{D}_{1}\cap B_R}\partial_{x_2}(v_{1}^{1}-\tilde{v}_{1}^{1})^2n_2x_{1}\right|\\
&\leq\int_{\Sigma'}\frac{C|x_{1}|}{\sqrt{\varepsilon}}+\int_{{B'_R}\backslash\Sigma'}\frac{C|x_{1}|}{\sqrt{\varepsilon+d^2(x_{1})}}\\
&\leq \frac{C|\Sigma'|^{2}}{\sqrt{\varepsilon}}+C|\Sigma'||\log\varepsilon|+C,
\end{align*}
The other fact that $n_{1}=0$ on $\partial{D}_{1}\cap\overline{\Sigma}$ clearly implies that
\begin{align*}
|\mathrm{II}_{13}^3|&\leq C\int_{{B'_R}\backslash\Sigma'}\frac{d(x_{1})|x_{1}|}{\varepsilon+d^2(x_{1})}\leq C\int_{{B'_R}\backslash\Sigma'}\frac{|x_{1}|}{\sqrt{\varepsilon+d^2(x_{1})}}\\
&\leq C|\Sigma'||\log\varepsilon|+C.
\end{align*}
Combining these estimates together yields
\begin{align*}
|\mathrm{II}_{13}|&\leq \frac{C|\Sigma'|^{2}}{\sqrt{\varepsilon}}+C.
\end{align*}
So \eqref{lem34-4} is obtained.

{\bf Step 5.} Estimates of $a_{11}^{23}$ and $a_{11}^{32}$.

By the definition,
\begin{align*}
a_{11}^{23}=a_{11}^{32}=&-\int_{\partial{D}_{1}}\frac{\partial v_{1}^{2}}{\partial \nu_0}\large\Big|_{+}\cdot\psi_{3}\\
=&\int_{\partial{D}_{1}\cap B_R}\left(\lambda\big(\sum_{k=1}^2\partial_{x_k} (v_{1}^{2})^k\big)n_1
+\mu\sum_{l=1}^2\left(\partial_{x_{1}}(v_{1}^{2})^{l}+\partial_{x_l}(v_{1}^{2})^{1}\right)n_2\right)x_{2}\\
&-\int_{\partial{D}_{1}\cap B_R}\left(\lambda\big(\sum_{k=1}^2\partial_{x_k} (v_{1}^{2})^k\big)n_2
+\mu\sum_{l=1}^2\left(\partial_{x_2}(v_{1}^{2})^{l}+\partial_{x_l}(v_{1}^{2})^2\right)n_l\right)x_{1}+O(1)\\
=&:\mathrm{I}_{23}-\mathrm{II}_{23}+O(1).
\end{align*}
First, noting that $|x_2|\leq C(\varepsilon+d^2(x_{1}))$, we have
\begin{equation}\label{I23}
|\mathrm{I_{23}}|\leq C\int_{B_R'}\frac{|x_{2}|}{\varepsilon+d^2(x_{1})}dx_1\leq C.
\end{equation}
Next, denote
$$
\mathrm{II}_{23}^1:=\int_{\partial{D}_{1}\cap B_R}\partial_{x_{1}}(v_{1}^{2})^{1}n_2x_{1},\quad
\mathrm{II}_{23}^2:=\int_{\partial{D}_{1}\cap B_R}\partial_{x_2}(v_{1}^{2})^2n_2x_{1},
$$
$$
\mathrm{II}_{23}^3:=\int_{\partial{D}_{1}\cap B_R}\big(\partial_{x_2}(v_{1}^{2})^{1}+\partial_{x_{1}}(v_{1}^{2})^{2}\big)n_1x_{1}.
$$
By using the fact that $(\tilde{v}_{1}^{2})^{1}=0$ on $\partial{D}_{1}$  and \eqref{eqn_log}, it is easy to see as before that
\begin{align}\label{II231}
|\mathrm{II}_{23}^1|&=\left|\int_{\partial{D}_{1}\cap B_R}\partial_{x_{1}}(\tilde{v}_{1}^{2})^{1}n_2x_{1}+\int_{\partial{D}_{1}\cap B_R}\partial_{x_{1}}(v_{1}^{2}-\tilde{v}_{1}^{2})^{1}n_2x_{1}\right|\nonumber\\
&\leq\,C\int_{\Sigma'}\frac{|x_{1}|}{\sqrt{\varepsilon}}dx_{1}+C\int_{{ B'_R}\backslash\Sigma'}\frac{|x_{1}|}{\sqrt{\varepsilon+d^2(x_{1})}}dx_{1}\nonumber\\
&\leq\frac{C|\Sigma'|^{2}}{\sqrt{\varepsilon}}+C|\Sigma'||\log\varepsilon|+C.
\end{align}
By $n_{1}=0$ on $\partial{D}_{1}\cap\overline{\Sigma}$  and (\ref{normal}),
\begin{align}\label{II233}
|\mathrm{II}_{23}^3|&\leq C\int_{B'_R\backslash\Sigma'}\frac{d(x_{1})|x_{1}|}{\varepsilon+d^2(x_{1})}dx_{1}\leq C\int_{B'_R\backslash\Sigma'}\frac{|x_{1}|}{\sqrt{\varepsilon+d^2(x_{1})}}dx_{1}\leq C|\Sigma'||\log\varepsilon|+C.
\end{align}

Finally, the term $\mathrm{II}_{23}^2$ is not immediate, we further divide it into three parts,
\begin{align*}
\mathrm{II}_{23}^2&=\int_{\partial{D}_{1}\cap \overline{\Sigma}}\partial_{x_2}(\tilde{v}_{1}^{2})^{2}n_2x_{1}+\int_{\partial{D}_{1}\cap B_R\backslash \overline\Sigma}\partial_{x_2}(\tilde{v}_{1}^{2})^{2}n_2x_{1}+\int_{\partial{D}_{1}\cap B_R}\partial_{x_2}(v_{1}^{2}-\tilde{v}_{1}^{2})^{2}n_2x_{1}\\
&=:\mathrm{II}_{23}^{2(1)}+\mathrm{II}_{23}^{2(2)}+\mathrm{II}_{23}^{2(3)}.
\end{align*}
Here the first and third terms are still easy to handle. Noticing that $\Sigma'=(-R_0,R_0)$, 
\begin{align*}
|\mathrm{II}_{23}^{2(1)}|=\left|\int_{\partial{D}_{1}\cap \overline{\Sigma}}\partial_{x_2}(\tilde{v}_{1}^{2})^{2}n_2x_{1}\right|=\left|\int_{\Sigma'}\frac{x_{1}}{\varepsilon}dx_{1}\right|=0,
\end{align*}
and by using \eqref{nabla_w_i02} and \eqref{eqn_log} again,
\begin{align*}
|\mathrm{II}_{23}^{2(3)}|&\leq\int_{\Sigma'}\frac{C|x_{1}|}{\sqrt{\varepsilon}}dx_{1}+\int_{B'_R\backslash\Sigma'}\frac{C|x_{1}|}{\sqrt{\varepsilon+d^2(x_{1})}}dx_{1}\leq \frac{C|\Sigma'|^{2}}{\sqrt{\varepsilon}}+C|\Sigma'||\log\varepsilon|+C.
\end{align*}

In the following, we use the Taylor expansions of $h_{1}-h_{2}$ to estimate the middle term $\mathrm{II}_{23}^{2(2)}$.
\begin{align*}
|\mathrm{II}_{23}^{2(2)}|&\leq\left|\int_{{B'_R}\backslash\Sigma'}\frac{x_{1}}{\varepsilon+h_1(x_{1})-h_2(x_{1})}dx_{1}\right|\\
&=\left|\int_{R_{0}}^{R}\frac{x_{1}}{\varepsilon+h_1(x_{1})-h_2(x_{1})}dx_{1}+\int_{-R}^{-R_{0}}\frac{x_{1}}{\varepsilon+h_1(x_{1})-h_2(x_{1})}dx_{1}\right|.
\end{align*}
By the assumptions on $h_{1}$ and $h_{2}$, we have $(h_{1}-h_{2})$'s Taylor expansions at $x_{1}=\pm R_{0}$ as follows,
$$(h_{1}-h_{2})(x_{1})=c_{\pm}(x_{1}\mp R_{0})^{2}+o\left((x_{1}\mp R_{0})^{2}\right),\quad\mbox{where}~ c_{\pm}=\frac{1}{2}(h_{1}-h_{2})''(\pm R_{0}).$$
Thus,
\begin{align*}
\int_{R_{0}}^{R}\frac{x_{1}}{\varepsilon+h_1(x_{1})-h_2(x_{1})}dx_{1}&=\int_{R_0}^{R}\frac{x_{1}}{\varepsilon+c_{+}(x_{1}-R_0)^2}dx_{1}\\
&+\int_{R_0}^{R}\frac{x_{1}o\left((x_{1}-R_{0})^{2}\right)}{[\varepsilon+h_1(x_{1})-h_2(x_{1})][\varepsilon+c_{+}(x_{1}-R_0)^2]}dx_{1},
\end{align*}
and
\begin{align*}
\int^{-R_{0}}_{-R}\frac{x_{1}}{\varepsilon+h_1(x_{1})-h_2(x_{1})}dx_{1}&=\int^{-R_{0}}_{-R}\frac{x_{1}}{\varepsilon+c_{-}(x_{1}+R_0)^2}dx_{1}\\
&+\int^{-R_{0}}_{-R}\frac{x_{1}o\left((x_{1}+R_{0})^{2}\right)}{[\varepsilon+h_1(x_{1})-h_2(x_{1})][\varepsilon+c_{-}(x_{1}+R_0)^2]}dx_{1}.
\end{align*}
By calculation, we have
\begin{align*}
&\left|\int_{R_0}^{R}\frac{x_{1}}{\varepsilon+c_{+}(x_{1}-R_0)^2}dx_{1}+\int^{-R_{0}}_{-R}\frac{x_{1}}{\varepsilon+c_{-}(x_{1}+R_0)^2}dx_{1}\right|\\
&\leq\left|\frac{1}{2c_{+}}-\frac{1}{2c_{-}}\right||\log\varepsilon|+\frac{C|\Sigma'|}{\sqrt{\varepsilon}}+C,
\end{align*}
and the remaiders are  bounded by $o(1/\sqrt{\varepsilon})$. Hence
\begin{align*}
|\mathrm{II}_{23}^{2(2)}|&\leq \,\left|\frac{1}{2c_{+}}-\frac{1}{2c_{-}}\right||\log\varepsilon|+\frac{C|\Sigma'|}{\sqrt{\varepsilon}}+C.
\end{align*}
So that
$$|\mathrm{II}_{23}^2|\leq \frac{C|\Sigma'|^{2}}{\sqrt{\varepsilon}}+\left(C|\Sigma'|+\left|\frac{1}{2c_{+}}-\frac{1}{2c_{-}}\right|\right)|\log\varepsilon|+C.$$
We remark that if $\Sigma'=\{0'\},$ then by the assumption that $\partial{D_{1}}$ and $\partial{D}_{2}$ are of $C^{2,\alpha}$, we have $c_{+}=c_{-}$, thus it is obvious that $|\mathrm{II}_{23}^2|\leq C$. Therefore, the above estimates of $\mathrm{II}_{23}^2$ is a new phenomenon due to $|\Sigma'|>0$. So that for $\varepsilon>0$ small enough,
$$|\mathrm{II}_{23}^2|\leq \frac{C|\Sigma'|^{2}}{\sqrt{\varepsilon}}+C.$$
Combining with \eqref{I23}, \eqref{II231}  and \eqref{II233} yields estimate \eqref{lem34-5}.

The proof of Lemma \ref{lem_a11} is finished.
\end{proof}

\section{Proof of Theorem \ref{thmd}}\label{sec_highd}

To extend the result to general $d\geq3$ in this section we neeed to modify the argument. Here again, we remark that it is worth paying attention to the role of  the flatness between $\partial{D}_{1}$ and $\partial{D}_{2}$ in the blow-up analysis of $|\nabla u|$.

\subsection{Main Ingredients and Proof of Theorem \ref{thmd}}

For problem (\ref{equ_v1}) in higher dimensions $d\geq3$, taking
$$\psi=\psi_{\alpha},\quad\mbox{and}\quad \tilde{v}_1^\alpha:=\bar{v}\psi_{\alpha},\qquad\alpha=1,2,\cdots,\frac{d(d+1)}{2},$$
in Theorem \ref{thm2.1}, we have
\begin{corollary}\label{corol3.4}
Assume the above, let $v_{i}^\alpha$, $i=1,2$, be the
weak solutions of \eqref{equ_v1}, respectively. Then for sufficiently small $0<\varepsilon<1/2$, we have
\begin{equation}\label{dnabla_w_i0}
\|\nabla(v_{i}^\alpha-\tilde{v}_i^\alpha)\|_{L^{\infty}(\Omega)}\leq\,\frac{C}{\sqrt{\varepsilon+d^{2}(x')}},\quad \alpha=1,2,\cdots,d;
\end{equation}
\begin{equation}\label{dnabla_w_i00}
\|\nabla(v_{i}^\alpha-\tilde{v}_i^\alpha)\|_{L^{\infty}(\Omega)}\leq\,\frac{C|x'|}{\sqrt{\varepsilon+d^{2}(x')}},\quad \alpha=d+1,\cdots,\frac{d(d+1)}{2};
\end{equation}
Consequently, by using \eqref{eq1.7},
\begin{equation}\label{v1-bounded1}
\frac{1}{C\left(\varepsilon+d^{2}(x')\right)}\leq|\nabla v_{i}^\alpha(x)|\leq\,\frac{C}{\varepsilon+d^{2}(x')},\quad\,~i=1,2;~\alpha=1,\cdots,d,~x\in\Omega_{R};
\end{equation}
\begin{equation}\label{v1-x'}
|\nabla_{x'}v_{i}^\alpha(x)|\leq\,\frac{C}{\sqrt{\varepsilon+d^{2}(x')}},\qquad\,~i=1,2;~\alpha=1,\cdots,d,~x\in\Omega_{R};
\end{equation}
\begin{equation}\label{dv1---bounded1}
|\nabla v_i^{\alpha}(x)|\leq\frac{C(\varepsilon+|x'|)}{\varepsilon+d^2(x')},\qquad\,~i=1,2;~\alpha=d+1,\cdots,\frac{d(d+1)}{2},~x\in \Omega_{R};
\end{equation}
\begin{equation}\label{v1--bounded1}
|\nabla v_{i}^\alpha(x)|\leq\,C,\qquad\,~i=1,2;~\alpha=1,2,\cdots,\frac{d(d+1)}{2},~x\in\Omega\setminus\Omega_{R};
\end{equation}
and
$$|\nabla(v_{1}^{\alpha}+v_{2}^\alpha)(x)|\leq\,C\|\varphi\|_{C^{2}(\partial D)},\quad\,\alpha=1,2,\cdots,\frac{d(d+1)}{2},\quad~x\in\Omega;
$$$$|\nabla{v}_{3}(x)|\leq C\|\varphi\|_{C^{2}(\partial D)},\quad\,x\in\Omega.
$$
where $C$ is a {\it universal constant}, independent of $|\Sigma'|$.
\end{corollary}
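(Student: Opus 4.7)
The plan is to specialize Theorem \ref{thm2.1} and Corollary \ref{cor2.2} to the rigid-displacement data $\psi = \psi_\alpha$ and then read off each of the stated bounds by tracking the size of $\psi_\alpha$ on $\partial D_1$ in the narrow region. The argument splits naturally into three parts: pointwise estimates for the individual $v_i^\alpha$, estimates for the sum $v_1^\alpha+v_2^\alpha$, and the cited bound for $v_3$.

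The first preparatory remark is that the hypotheses \eqref{h1h2}, \eqref{h1h20}, \eqref{h1-h21} together with the $C^{2,\alpha}$ bound \eqref{h1h3} give
\[
\tfrac{1}{C}\bigl(\varepsilon+d^2(x')\bigr)\le \delta(x')=\varepsilon+h_1(x')-h_2(x')\le C\bigl(\varepsilon+d^2(x')\bigr).
\]
Indeed, on $\Sigma'$ both $h_1$ and $h_2$ vanish and $d(x')=0$; off $\overline{\Sigma'}$ one integrates $\nabla^2_{x'}(h_1-h_2)\ge\kappa_0 I_{d-1}$ from the nearest point of $\partial\Sigma'$ (where the gradients vanish by \eqref{h1h20}) to obtain the lower bound, and uses \eqref{h1h3} for the matching upper bound. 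Consequently every factor $1/\sqrt{\delta(x')}$ in Theorem \ref{thm2.1} may be replaced by $C/\sqrt{\varepsilon+d^2(x')}$.

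Next I would split according to the index $\alpha$. For the translation part $\alpha=1,\dots,d$ one has $\psi_\alpha=e_\alpha$ and hence $|\psi_\alpha(x',\varepsilon+h_1(x'))|\le 1$ with $\|\psi_\alpha\|_{C^2(\partial D_1)}\le C$; feeding this into \eqref{equa2.9}, \eqref{eq1.7}, \eqref{eq1.7a} and \eqref{estimate_vl} yields \eqref{dnabla_w_i0}, \eqref{v1-bounded1} and \eqref{v1-x'} directly. For the rotation part $\alpha=d+1,\dots,d(d+1)/2$, $\psi_\alpha=x_je_k-x_ke_j$; on $\partial D_1\cap\Omega_{2R}$ one has $x_d=\varepsilon+h_1(x')=O(\varepsilon+|x'|^2)$, so $|\psi_\alpha(x',\varepsilon+h_1(x'))|\le C(|x'|+\varepsilon)$, and plugging this estimate into Theorem \ref{thm2.1} produces \eqref{dnabla_w_i00} and \eqref{dv1---bounded1}. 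The exterior bound \eqref{v1--bounded1} is just the second conclusion of Theorem \ref{thm2.1} combined with $\|\psi_\alpha\|_{C^2(\partial D_1)}\le C$ (which holds because $\partial D_1$ is bounded).

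For $v_1^\alpha+v_2^\alpha$ I would apply Corollary \ref{cor2.2} with $\psi=\widetilde\psi=\psi_\alpha$. The crucial observation is that the driving quantity is
\[
\bigl|\psi_\alpha(x',\varepsilon+h_1(x'))-\psi_\alpha(x',h_2(x'))\bigr|,
\]
which vanishes for every translation $\alpha\le d$, vanishes for any rotation $\psi_\alpha=x_je_k-x_ke_j$ with $j,k<d$, and equals $|(\varepsilon+h_1-h_2)e_j|\le C(\varepsilon+d^2(x'))$ in the remaining case $k=d$. In all three cases the fraction in Corollary \ref{cor2.2} is bounded by a universal constant, giving $|\nabla(v_1^\alpha+v_2^\alpha)|\le C\|\varphi\|_{C^2(\partial D)}$ on $\Omega$. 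Finally, the uniform bound on $|\nabla v_3|$ is precisely theorem 1.1 of \cite{llby}, which was recalled at the beginning of Subsection \ref{subsec_general_bvp}. No step is a serious obstacle; the only place that requires care is correctly identifying the magnitude of the rotation fields $\psi_\alpha$ on $\partial D_1$ inside the narrow region, since this is what produces the extra factor $|x'|$ appearing in \eqref{dnabla_w_i00} and \eqref{dv1---bounded1} and, ultimately, the distinct asymptotic regimes in Theorem \ref{thmd}.
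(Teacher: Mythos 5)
Your proposal is correct and follows exactly the route the paper takes: the corollary is obtained by specializing Theorem \ref{thm2.1} and Corollary \ref{cor2.2} to $\psi=\psi_{\alpha}$, using $\delta(x')\sim\varepsilon+d^{2}(x')$, the bound $|\psi_{\alpha}(x',\varepsilon+h_{1}(x'))|\leq C(\varepsilon+|x'|)$ for the rotations (coming from $x_{d}=O(\varepsilon+d^{2}(x'))$ on $\partial D_{1}\cap B_{2R}$), the cancellation $\psi_{\alpha}(x',\varepsilon+h_{1})-\psi_{\alpha}(x',h_{2})=O(\delta)$ for $v_{1}^{\alpha}+v_{2}^{\alpha}$, and theorem 1.1 of \cite{llby} for $v_{3}$. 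Your identification of the rotation fields as the source of the extra factor $|x'|$ is precisely the point the paper relies on, so no further comment is needed.
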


For $d\geq$3, from the fourth line of \eqref{maineqn}, we have
\begin{equation}\label{system3}
\begin{cases}
\sum_{\alpha=1}^{\frac{d(d+1)}{2}}C_{1}^\alpha a_{11}^{\alpha\beta}+\sum_{\alpha=1}^{\frac{d(d+1)}{2}}C_{2}^\alpha a_{21}^{\alpha\beta}=b_1^\beta,\\\\
\sum_{\alpha=1}^{\frac{d(d+1)}{2}}C_{1}^\alpha a_{12}^{\alpha\beta}+\sum_{\alpha=1}^{\frac{d(d+1)}{2}}C_{2}^\alpha a_{22}^{\alpha\beta}=b_2^\beta,
\end{cases}~~~\beta=1,2,\cdots,\frac{d(d+1)}{2}.
\end{equation}
Similarly as in Section \ref{sec_d=2}, for simplicity, we use $a_{ij}$ to denote the $\frac{d(d+1)}{2}\times\frac{d(d+1)}{2}$ matrix $(a_{ij}^{\alpha\beta})$ and make use of the first $\frac{d(d+1)}{2}$ equations in \eqref{system3}:
\begin{equation}\label{c1-c23}
a_{11}C_1+a_{21}C_2=b_1,
\end{equation}
where
\begin{equation*}
C_i=(C_i^1,C_i^2,\cdots,C_{i}^{\frac{d(d+1)}{2}})^T, \quad i=1,2,\quad\mbox{and}\quad b_1=(b_1^1,b_1^2,\cdots,b_{1}^{\frac{d(d+1)}{2}})^T,
\end{equation*}
to estimate
$$|C_{1}^{\alpha}-C_{2}^{\alpha}|, \quad\alpha=1,\cdots,\frac{d(d+1)}{2}.
$$
Now rewrite \eqref{c1-c23} as
\begin{equation}\label{dp}
a_{11}(C_1-C_2)=p,\quad\quad p:=b_1-(a_{11}+a_{21})C_2.
\end{equation}

In order to emphasize the role of $|\Sigma'|$ in the blow-up analysis of $|\nabla u|$ and avoid complicated calculation, we here assume $\Sigma'$ is a ball in $\mathbb{R}^{d-1}$. More general domain is considered after the proof of Lemma \ref{glemma4.1}.

\begin{lemma}\label{glemma4.1}
Suppose $\Sigma'=B'_{R_{0}}$. Then
\begin{align}
\hspace{-.3cm}\mbox{for}~d=3,\qquad \frac{1}{C}\left(
\frac{|\Sigma'|}{\varepsilon}+|\log\varepsilon|\right)\leq a_{11}^{\alpha\alpha}
&\leq
C\left(
\frac{|\Sigma'|}{\varepsilon}+|\log\varepsilon|\right)
,\quad \alpha=1, 2,3;\label{glem4.1-13}
\end{align}
\begin{align}
\hspace{-.3cm}\mbox{for}~d\geq4,\quad\qquad\qquad\frac{1}{C}\left(\frac{|\Sigma'|}{\varepsilon}+1\right)\leq a_{11}^{\alpha\alpha}
&\leq C\left(\frac{|\Sigma'|}{\varepsilon}+1\right),\quad \alpha=1, 2,\cdots,d;\label{glem4.1-1}
\end{align}
and for $d\geq3$,
\begin{align}
\frac{1}{C}\left(\frac{|\Sigma'|^{\frac{d+1}{d-1}}}{\varepsilon}+1\right)\leq a_{11}^{\alpha\alpha}&\leq C\left(\frac{|\Sigma'|^{\frac{d+1}{d-1}}}{\varepsilon}+1\right),\quad \alpha=d+1,\cdots,\frac{d(d+1)}{2};\label{gdd4}
\end{align}
\begin{align}\label{gdd3}
|a_{11}^{\alpha\beta}|=|a_{11}^{\beta\alpha}|&\leq C\left(\frac{|\Sigma'|}{\sqrt{\varepsilon}}+|\Sigma'|^{\frac{d-2}{d-1}}|\log\varepsilon|+1\right), \quad \alpha,\beta=1,\cdots,d; \quad \alpha\neq\beta;
\end{align}
\begin{align}\label{gd1d4}
|a_{11}^{\alpha\beta}|=|a_{11}^{\beta\alpha}|&\leq C\left(\frac{|\Sigma'|^{\frac{d}{d-1}}}{\sqrt{\varepsilon}}+|\Sigma'||\log\varepsilon|+1\right), \quad \alpha=1,\cdots,d;\quad \beta=d+1,\cdots,\frac{d(d+1)}{2};
\end{align}
\begin{align}\label{gd4d5}
|a_{11}^{\alpha\beta}|=|a_{11}^{\beta\alpha}|&\leq C\left(\frac{|\Sigma'|^{\frac{d+1}{d-1}}}{\sqrt{\varepsilon}}+|\Sigma'|^{\frac{d}{d-1}}|\log\varepsilon|+1\right), \quad \alpha,\beta=d+1,\cdots,\frac{d(d+1)}{2};\quad \alpha\neq\beta.
\end{align}
\end{lemma}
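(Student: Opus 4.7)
\medskip
\noindent\textbf{Proof proposal.}  The plan is to follow the same strategy as in Lemma \ref{lem_a11}, with the two-dimensional elementary integrals replaced by their $(d-1)$-dimensional analogues computed in polar coordinates centered at the spherical interface $\partial\Sigma'$.

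First, for the diagonal entries I would write
$$
a_{11}^{\alpha\alpha}=\int_\Omega(\mathbb{C}^0 e(v_1^\alpha),e(v_1^\alpha))\,dx
$$
and use the ellipticity inequality \eqref{ellip} for the upper bound, together with the lower bound
$$
a_{11}^{\alpha\alpha}\geq \tfrac{1}{C}\int_{\Omega_R}|\partial_{x_d}(v_1^\alpha)^{\alpha}|^2\,dx
\geq \tfrac{1}{C}\int_{B'_R}\frac{|\psi_\alpha^{\alpha}(x',\varepsilon+h_1)|^2}{\delta(x')}\,dx',
$$
which comes from the fact that $\bar v(x',\cdot)$, and hence $x_j\bar v(x',\cdot)$, is the linear (hence harmonic, hence energy-minimizing) extension in $x_d$. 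The upper bound side reduces by Corollary \ref{corol3.4} to controlling
$$
\int_{B'_R}\frac{|\psi_\alpha^{\alpha}(x',\varepsilon+h_1)|^2}{\delta(x')}\,dx'
\quad\text{and}\quad
\int_{B'_R}\frac{|\psi_\alpha^{\alpha}(x',\varepsilon+h_1)|^2}{\varepsilon+d^2(x')}\,dx'.
$$
I split each integral into the flat piece $\Sigma'$ and the transition annulus $B'_R\setminus\overline{\Sigma'}$. On $\Sigma'$ the integrand is of size $1/\varepsilon$ (resp.~$|x'|^2/\varepsilon$ for rotations), producing the leading term $|\Sigma'|/\varepsilon$ (resp.~$|\Sigma'|^{(d+1)/(d-1)}/\varepsilon$, using that $\int_{B'_{R_0}}|x'|^2\,dx'\sim R_0^{d+1}\sim|\Sigma'|^{(d+1)/(d-1)}$). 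On the annulus, writing $|x'|=R_0+s$ and using $h_1-h_2\gtrsim s^2$, the integral becomes
$$
\int_0^{R-R_0}\frac{(R_0+s)^{d-2}}{\varepsilon+s^2}\,ds,
$$
which is $O(|\log\varepsilon|)$ when $d=3$ and $R_0=0$, but in general is dominated by $R_0^{d-2}/\sqrt{\varepsilon}$ plus lower order; the latter is absorbed by $|\Sigma'|/\varepsilon+1$ via Young's inequality. This produces \eqref{glem4.1-13}--\eqref{gdd4}.

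For the off-diagonal entries I would use the boundary representation
$$
a_{11}^{\alpha\beta}=-\int_{\partial D_1}\tfrac{\partial v_1^\alpha}{\partial\nu_0}\Big|_+\!\cdot\psi_\beta,
$$
expand $\partial v_1^\alpha/\partial\nu_0$ as in Step 3 of Lemma \ref{lem_a11}, and split each surface integrand into a term multiplied by $n_d\sim 1$ and a term multiplied by $n_k\sim d(x')\chi_{B'_R\setminus\Sigma'}$. The key input is the sharpened tangential estimate \eqref{v1-x'}: any derivative $\partial_{x_k}(v_1^\alpha)$ with $k<d$ is of order $(\varepsilon+d^2(x'))^{-1/2}$ rather than $(\varepsilon+d^2(x'))^{-1}$, while the $n_k$ factor supplies an extra $d(x')$, bringing the exponent back to $-1/2$ in the transition region. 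The resulting $(d-1)$-dimensional integrals
$$
\int_{B'_R}\frac{|x'|^{r}}{\sqrt{\varepsilon+d^2(x')}}\,dx',\qquad r=0,1,2,
$$
are again split into flat and transition pieces and evaluated in polar coordinates, producing the $|\Sigma'|^{(\cdot)}/\sqrt{\varepsilon}$ terms and $|\Sigma'|^{(\cdot)}|\log\varepsilon|$ contributions stated in \eqref{gdd3}--\eqref{gd4d5}.

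The step I expect to be most delicate is the mixed translation/rotation entry of type $a_{11}^{\alpha\beta}$ with $\alpha\leq d$ and $\beta\geq d+1$, where, as in Step 5 of Lemma \ref{lem_a11}, the flat-part contribution $\int_{\Sigma'}x_k/\varepsilon\,dx'$ of the dangerous sub-term $\mathrm{II}^{2(1)}$ must vanish and the transition-part sub-term $\mathrm{II}^{2(2)}$ has to be handled through the second-order Taylor expansion of $h_1-h_2$ at $\partial\Sigma'$. Here the hypothesis $\Sigma'=B'_{R_0}$ is crucial: the spherical symmetry guarantees that these odd-parity integrals against $x_k$ cancel exactly on $\Sigma'$ and that the Taylor coefficient of $(h_1-h_2)$ is constant along $\partial\Sigma'$, so that the would-be $|\log\varepsilon|/\sqrt{\varepsilon}$ blow-up is removed and the bound reduces to $|\Sigma'|^{d/(d-1)}/\sqrt{\varepsilon}+|\Sigma'||\log\varepsilon|+1$. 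Lifting this symmetry assumption (as the remark following the lemma suggests) would require a more careful analysis of the surface term via the mean value of $x_k$ on $\partial\Sigma'$.
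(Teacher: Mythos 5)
Your overall plan --- energy representations with harmonic-in-$x_d$ extensions for the diagonal entries, boundary representations split along $n_d$ versus $n_k$ and polar coordinates centered on $\partial\Sigma'$ for the off-diagonal ones --- is exactly the route the paper takes, and your flat/transition decomposition for the diagonal bounds is correct. The gap is in your final paragraph, where you describe ``the step I expect to be most delicate.'' You propose to control the dangerous sub-term $\int_{B'_R\setminus\Sigma'}\frac{x_k\,n_d}{\delta(x')}\,dx'$ (coming from $\partial_{x_d}\tilde v = 1/\delta(x')$ on $\partial D_1$) by a second-order Taylor expansion of $h_1-h_2$ at $\partial\Sigma'$, on the grounds that the hypothesis $\Sigma'=B'_{R_0}$ ``guarantees \dots\ that the Taylor coefficient of $(h_1-h_2)$ is constant along $\partial\Sigma'$.'' This is not given: $\Sigma'=B'_{R_0}$ constrains only the shape of the flat region, while $h_1,h_2$ are merely $C^{2,\alpha}$ functions satisfying \eqref{h1h20}--\eqref{h1h3} and need not be radially symmetric on $B'_{2R}\setminus\overline{\Sigma'}$; the Hessian $\nabla_{x'}^2(h_1-h_2)$ may vary along $\partial\Sigma'$, so the cancellation you invoke is unjustified.

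In fact the paper's own proof of Lemma \ref{glemma4.1} uses no Taylor expansion. For $\Sigma'=B'_{R_0}$ it exploits the spherical symmetry only to annihilate the flat-part integrals (namely $\int_{\Sigma'}x_k\,dx'=0$ and $\int_{\Sigma'}x_jx_k\,dx'=0$ by odd parity), and then bounds the transition-annulus contributions of $\partial_{x_d}\tilde v$ by plain absolute value in polar coordinates: the extra $r^{d-2}$ Jacobian present when $d\ge3$ --- but absent in the two-dimensional Lemma \ref{lem_a11}, which is precisely why Step~5 of that proof does require the Taylor analysis --- makes $\int_{B'_R\setminus\Sigma'}\frac{|x'|^{r}}{\varepsilon+d^2(x')}\,dx'$ tractable without any cancellation across the annulus. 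The Taylor argument reappears only in the remark following the lemma, treating $\Sigma'$ an ellipse in $d=3$, exactly because the crude bound no longer suffices once the spherical symmetry is dropped. Your proposal thus imports $d=2$ machinery whose key symmetry premise is false here, and at the same time your list of ``resulting $(d-1)$-dimensional integrals'' $\int_{B'_R}\frac{|x'|^r}{\sqrt{\varepsilon+d^2(x')}}\,dx'$ omits precisely the $\int_{B'_R\setminus\Sigma'}\frac{|x'|^{r}}{\varepsilon+d^2(x')}\,dx'$ terms (without the square root) that must actually be controlled; those, not the tangential $1/\sqrt{\varepsilon+d^2}$ pieces, are where the delicate contribution of $\partial_{x_d}\tilde v$ lives.
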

From these estimates for each element of matrix $(a_{11}^{\alpha\beta})$ above, it is obvious that $a_{11}$ is invertible whenever $|\Sigma'|>0$.

\begin{lemma}\label{lemma_pd}
Let $C_i^\alpha$ be defined in \eqref{decom_u}, $i=1,2$, $\alpha=1,2,\cdots,\frac{d(d+1)}{2}$. Then $C_{i}^{\alpha}$ is bounded, and
\begin{align*}
|a_{11}^{\alpha\beta}+a_{21}^{\alpha\beta}|&\leq C\|\varphi\|_{C^{2}(\partial D)}, \quad \alpha, \beta=1,\cdots,\frac{d(d+1)}{2};\\
|b_1^\beta|&\leq C\|\varphi\|_{C^{2}(\partial D)}, \quad \beta=1,\cdots,\frac{d(d+1)}{2}.
\end{align*}
Consequently,
\begin{equation}\label{dp3-c}
|p|\leq C\|\varphi\|_{C^{2}(\partial D)}.
\end{equation}
\end{lemma}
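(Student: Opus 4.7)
The proof will follow the blueprint of Lemma \ref{lemma p} in the two-dimensional case, since the arguments for $|b_1^\beta|$ and $|a_{11}^{\alpha\beta}+a_{21}^{\alpha\beta}|$ do not actually use the dimension. The plan is to reduce each of the three bounds to an estimate already available in Corollary \ref{corol3.4}, and then combine them with the definition $p=b_1-(a_{11}+a_{21})C_2$.

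First I would handle $|b_1^\beta|$: by definition
\[
b_1^\beta=\int_{\partial D_1}\frac{\partial v_3}{\partial \nu_0}\Big|_+\cdot\psi_\beta,
\]
and since Corollary \ref{corol3.4} gives $|\nabla v_3|\le C\|\varphi\|_{C^2(\partial D)}$ on all of $\Omega$ (in particular up to $\partial D_1$), the conormal derivative is bounded by $C\|\varphi\|_{C^2(\partial D)}$; the boundary integral of $\psi_\beta\in C^1$ against such a bounded quantity over the fixed surface $\partial D_1$ produces the desired bound. Next, for $|a_{11}^{\alpha\beta}+a_{21}^{\alpha\beta}|$, I would exploit the cancellation by writing
\[
a_{11}^{\alpha\beta}+a_{21}^{\alpha\beta}=-\int_{\partial D_1}\frac{\partial(v_1^\alpha+v_2^\alpha)}{\partial\nu_0}\Big|_+\cdot\psi_\beta,
\]
and then invoke the uniform bound $|\nabla(v_1^\alpha+v_2^\alpha)|\le C\|\varphi\|_{C^2(\partial D)}$ from Corollary \ref{corol3.4} (which itself comes from Corollary \ref{cor2.2} applied to the boundary data $\psi_\alpha$ on both $\partial D_1$ and $\partial D_2$, exploiting that the jump across the narrow region is zero). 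This again gives a uniformly bounded integrand on the fixed surface $\partial D_1$.

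The only genuinely non-trivial claim is the boundedness of $C_i^\alpha$. The plan is to adapt the argument from Lemma 4.1 in \cite{bll1} with the trivial modification needed to accommodate the flat portion $\Sigma'$ of the inclusion boundary. The structure is: from the decomposition \eqref{decom_u} and the boundary condition $u=\varphi$ on $\partial D$, together with the interior bounds for $v_i^\alpha$ and $v_3$ away from the narrow region $\Omega_R$, one extracts linear relations for the $C_i^\alpha$'s via the trace theorem applied on an auxiliary surface bounded away from the touching set, and then uses the non-degeneracy of the rigid-motion basis $\{\psi_\alpha\}$ on $\partial D_i$ to invert. The presence of the flat region $\Sigma'$ does not affect this step because the estimates used are interior to $\Omega\setminus\Omega_R$, where the geometry is uniformly nice regardless of $|\Sigma'|$.

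Finally, I would assemble the three pieces: writing $p=b_1-(a_{11}+a_{21})C_2$ componentwise and using the boundedness of $|C_2^\alpha|$ in conjunction with the entrywise bounds on $a_{11}+a_{21}$ and on $b_1$, one obtains $|p|\le C\|\varphi\|_{C^2(\partial D)}$. The step I expect to be the most delicate is the uniform-in-$\varepsilon$ bound for $C_i^\alpha$, because it is exactly here that one must verify that the partial flatness of $\partial D_1\cap\partial D_2$ does not destroy the coercivity argument of \cite{bll1}; however, since that argument only uses regions away from the contact set, the modification is indeed minor, and the authors signal this by referring directly to \cite{bll1}.
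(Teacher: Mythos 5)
Your proposal matches the paper's argument: the paper omits the proof of this lemma, referring to the two-dimensional case (Lemma \ref{lemma p}), whose proof uses exactly your steps — the bound on $|\nabla v_3|$ for $b_1^\beta$, the cancellation $a_{11}^{\alpha\beta}+a_{21}^{\alpha\beta}=-\int_{\partial D_1}\frac{\partial(v_1^\alpha+v_2^\alpha)}{\partial\nu_0}\big|_+\cdot\psi_\beta$ combined with the uniform bound on $|\nabla(v_1^\alpha+v_2^\alpha)|$, the trace-theorem adaptation of Lemma 4.1 of \cite{bll1} for $|C_i^\alpha|\le C$, and the assembly $p=b_1-(a_{11}+a_{21})C_2$. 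Your additional remark on why the flat portion $\Sigma'$ does not disturb the boundedness of $C_i^\alpha$ is consistent with the paper's ``minor modification'' claim.
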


The proof is very similar with that of Lemma \ref{lemma p}. we omit it. Using Lemma \ref{glemma4.1} and \ref{lemma_pd}, we have

\begin{prop}\label{dprop2}
Let $C_i^\alpha$ be defined in \eqref{decom_u}, $i=1,2$, $\alpha=1,2,\cdots,\frac{d(d+1)}{2}$. Then
\begin{align}\label{dc-c}
\hspace{-.5cm}\mbox{for}~d=3,\qquad\qquad |C_{1}^{\alpha}-C_{2}^{\alpha}|\leq \frac{C\varepsilon}{|\Sigma'|+\varepsilon|\log\varepsilon|}\|\varphi\|_{C^{2}(\partial D)},\quad\alpha=1,2,3,
\end{align}
\begin{align}\label{dc-c}
\hspace{-.5cm}\mbox{for}~d\geq4,\qquad\qquad |C_{1}^{\alpha}-C_{2}^{\alpha}|\leq \frac{C\varepsilon}{|\Sigma'|+\varepsilon}\|\varphi\|_{C^{2}(\partial D)},\quad\alpha=1,2,\cdots,d,
\end{align}
and for $d\geq3$,
\begin{align}\label{dc-c3}
|C_{1}^{\alpha}-C_{2}^{\alpha}|\leq \frac{C\varepsilon}{|\Sigma'|^{\frac{d+1}{d-1}}+\varepsilon}\|\varphi\|_{C^{2}(\partial D)}.\quad\alpha=d+1,\cdots,\frac{d(d+1)}{2}.
\end{align}
\end{prop}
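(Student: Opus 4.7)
The strategy is the same as in Proposition \ref{prop1}: apply Cramer's rule to \eqref{dp},
$$C_1^\alpha - C_2^\alpha = \frac{\det M_\alpha}{\det a_{11}},$$
where $M_\alpha$ denotes the matrix $a_{11}$ with its $\alpha$-th column replaced by $p$. Lemma \ref{lemma_pd} gives $|p|\leq C\|\varphi\|_{C^{2}(\partial D)}$, and Lemma \ref{glemma4.1} supplies sharp two-sided estimates on each $a_{11}^{\alpha\alpha}$ together with upper bounds on each $a_{11}^{\alpha\beta}$ for $\alpha\neq\beta$. Since the target bounds \eqref{dc-c}--\eqref{dc-c3} match exactly the reciprocals of the diagonal estimates \eqref{glem4.1-13}--\eqref{gdd4}, the whole task is to prove the asymptotic bounds
$$\det a_{11}\,\asymp\,\prod_{\gamma=1}^{\frac{d(d+1)}{2}} a_{11}^{\gamma\gamma},\qquad |\det M_\alpha|\,\lesssim\, |p|\prod_{\gamma\neq\alpha} a_{11}^{\gamma\gamma},$$
from which $|C_1^\alpha - C_2^\alpha| \leq C|p|/a_{11}^{\alpha\alpha}$ and the proposition follow at once.

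My plan is to expand $\det a_{11}$ over permutations and check term by term that every non-identity contribution is dominated by the identity contribution by at least a factor of $\sqrt\varepsilon$. Concretely, for each pair $(\alpha,\beta)$ the estimates \eqref{gdd3}--\eqref{gd4d5} imply that the ratio $a_{11}^{\alpha\beta}a_{11}^{\beta\alpha}/(a_{11}^{\alpha\alpha}a_{11}^{\beta\beta})$ is $O(\varepsilon^{\delta})$ for some $\delta>0$ after cancelling the dominant powers of $|\Sigma'|$. The same calculation applied to the minors in the column expansion of $M_\alpha$ then gives the second asymptotic bound. Combined with the diagonal lower bounds in \eqref{glem4.1-13}--\eqref{gdd4}, this yields \eqref{dc-c} and \eqref{dc-c3}.

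The main obstacle is tracking the two different diagonal scales simultaneously: the translation diagonal of order $|\Sigma'|/\varepsilon+L_d$ (with $L_3=|\log\varepsilon|$ and $L_d=1$ for $d\geq 4$) and the rotation diagonal of order $|\Sigma'|^{(d+1)/(d-1)}/\varepsilon+1$. The coupling block, with entries bounded by $|\Sigma'|^{d/(d-1)}/\sqrt\varepsilon+|\Sigma'||\log\varepsilon|+1$ from \eqref{gd1d4}, sits at precisely the geometric mean of these two scales, so the $\sqrt\varepsilon$ gain required for dominance is tight. A regime analysis in $|\Sigma'|$ completes the argument: when $|\Sigma'|\gtrsim\varepsilon^{\theta}$ for a suitable $\theta\in(0,1/2)$ the gain is uniform, while in the complementary regime the right-hand sides of \eqref{dc-c}--\eqref{dc-c3} degenerate to a universal constant and the bound reduces to the a priori estimate $|C_i^\alpha|\leq C\|\varphi\|_{C^2(\partial D)}$ (noted in Lemma \ref{lemma_pd}). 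The critical dimension $d=3$, where logarithms appear on both diagonal and off-diagonal entries of the translation block, is the most delicate case and is precisely what produces the $|\log\varepsilon|$ in the denominator of \eqref{dc-c}.
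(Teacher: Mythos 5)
Your overall architecture (Cramer's rule on \eqref{dp}, the reduction to $|C_1^\alpha-C_2^\alpha|\leq C|p|/a_{11}^{\alpha\alpha}$, and the observation that the targets are exactly the reciprocals of the diagonal estimates) is the right one and matches how the paper argues in dimension two. However, the key quantitative step --- that every non-identity permutation in the expansion of $\det a_{11}$ is dominated by the identity term, i.e.\ that each ratio $(a_{11}^{\alpha\beta})^2/(a_{11}^{\alpha\alpha}a_{11}^{\beta\beta})$ is $O(\varepsilon^{\delta})$ --- fails on a nonempty regime where the conclusion is still nontrivial. The rotation diagonal \eqref{gdd4} is comparable to $1$ as soon as $|\Sigma'|^{\frac{d+1}{d-1}}\lesssim\varepsilon$, i.e.\ $|\Sigma'|\lesssim\varepsilon^{\frac{d-1}{d+1}}$, and in that range the off-diagonal bound \eqref{gd4d5} is also only $O(1)$, so the ratio is $O(1)$ and no $\sqrt{\varepsilon}$ gain is available. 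Your dichotomy does not cover this: for $C\varepsilon\lesssim|\Sigma'|\lesssim\varepsilon^{\frac{d-1}{d+1}}$ (note $\frac{d-1}{d+1}\geq\frac12$, so your threshold $\theta\in(0,1/2)$ is on the wrong side) the right-hand side of \eqref{dc-c} for the translation constants is $\varepsilon/|\Sigma'|\ll1$, hence genuinely nontrivial, yet $\det a_{11}\asymp\prod_\gamma a_{11}^{\gamma\gamma}$ cannot be extracted from Lemma \ref{glemma4.1} alone: the rotation block is a symmetric matrix whose entries are merely bounded above, with diagonal bounded below, and such a matrix can a priori have arbitrarily small (even zero) determinant. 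The a priori bound $|C_i^\alpha|\leq C$ does not rescue \eqref{dc-c} there either.

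What is missing is a separate quantitative lower bound on the rotation block $D=(a_{11}^{\alpha\beta})_{\alpha,\beta\geq d+1}$, namely $\|D^{-1}\|\leq C$ uniformly in $\varepsilon$ and $|\Sigma'|$. This is exactly the role of the block-inversion Lemma \ref{lemma3.3}, which the paper invokes for the analogous Proposition \ref{mprop2}: write $a_{11}=\begin{pmatrix}A&B\\ B^T&D\end{pmatrix}$, verify $\|A^{-1}\|\leq(\theta\gamma)^{-1}$ with $\gamma=|\Sigma'|/\varepsilon+L_d\to\infty$ in the nontrivial regime (here your diagonal-dominance computation for the translation block is correct and suffices), and supply $\|B\|+\|D^{-1}\|\leq\theta^{-1}$; the lemma then yields $(a_{11})^{-1}\approx\mathrm{diag}(A^{-1},D^{-1})$ with controlled errors, giving \eqref{dc-c} in all regimes and \eqref{dc-c3} whenever its right-hand side is $o(1)$. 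The bound on $\|D^{-1}\|$ itself requires an argument beyond the entrywise estimates --- e.g.\ positive definiteness of the Gram matrix $a_{11}^{\alpha\beta}=\int_\Omega(\mathbb{C}^0e(v_1^\alpha),e(v_1^\beta))$ together with the energy lower bounds of the type used in Step~1 of the proof of Lemma \ref{glemma4.1} applied to linear combinations $\sum\xi_\alpha v_1^\alpha$. Either supply that bound and route the argument through Lemma \ref{lemma3.3}, or restrict the pure Cramer computation to the regime $|\Sigma'|^{\frac{d+1}{d-1}}\geq K\varepsilon$ with $K=K(d)$ large and handle the remaining window for the translation constants by the block argument.
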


Now we are in position to prove Theorem \ref{thmd}.

\begin{proof}[Proof of Theorem \ref{thmd}] Since
\begin{align*}
|\nabla{u}(x)|&\leq\sum_{\alpha=1}^{\frac{d(d+1)}{2}}\Big|(C_{1}^\alpha-C_{2}^\alpha)\nabla{v}_{1}^\alpha(x)\Big|+\sum_{\alpha=1}^{\frac{d(d+1)}{2}}\Big|C_{2}^\alpha\nabla({v}_{1}^\alpha+{v}_{2}^\alpha)(x)\Big|
+|\nabla{v}_{3}(x)|\\
&\leq\sum_{\alpha=1}^{d}\Big|(C_{1}^\alpha-C_{2}^\alpha)\nabla{v}_{1}^\alpha(x)\Big|+\sum_{\alpha=d+1}^{\frac{d(d+1)}{2}}\Big|(C_{1}^\alpha-C_{2}^\alpha)\nabla{v}_{1}^\alpha(x)\Big|+C\|\varphi\|_{C^2(\partial D)},
\end{align*}
using  Corollary \ref{corol3.4}, Lemma \ref{lemma_pd} and Proposition \ref{dprop2}, we obtain \eqref{thmd-1} and \eqref{thmd-2} for $x\in\Omega_{R}$. Estimate \eqref{thmd-3} can be obtained by standard theory of elliptic systems, see e.g. \cite{AD1,AD2}. The proof of Theorem \ref{thmd} is completed.
\end{proof}

\subsection{Proof of Lemma \ref{glemma4.1}}

\begin{proof}[Proof of Lemma \ref{glemma4.1}]

{\bf Step 1.} Estimate $a_{11}^{\alpha\alpha}$, $\alpha=1,\cdots,\frac{d(d+1)}{2}$.

Firstly, for $\alpha=1,\cdots,d$,
\begin{equation*}
a_{11}^{\alpha\alpha}=\int_{\Omega}(\mathbb{C}^0e(v_{1}^{\alpha}), e(v_{1}^{\alpha}))dx\leq C\int_{\Omega}|\nabla v_{1}^{\alpha}|^2dx.
\end{equation*}
We decompose the integral on the right hand side into three parts as before,
\begin{align}\label{da11'}
\int_{\Omega}|\nabla v_{1}^{\alpha}|^2=\int_{\Sigma}|\nabla v_{1}^{\alpha}|^{2}+\int_{\Omega_{R}\setminus\Sigma}|\nabla v_1^{\alpha}|^{2}+\int_{\Omega\setminus \Omega_{R}}|\nabla v_{1}^{\alpha}|^{2}.
\end{align}
For the first term, by using (\ref{v1-bounded1}),
\begin{align}\label{dfirst_term}
\int_{\Sigma}|\nabla v_{1}^{\alpha}|^{2}\leq\int_{\Sigma'}\int_{0}^{\varepsilon}\frac{C}{\varepsilon^2}dx_ddx'\leq\frac{C|\Sigma'|}{\varepsilon}.
\end{align}
For the last term of \eqref{da11'}, by using (\ref{v1--bounded1}),
\begin{align}\label{d-termd}
\int_{\Omega\setminus \Omega_{R}}|\nabla v_{1}^{\alpha}|^{2}\leq C.
\end{align}
For the middle term of \eqref{da11'},
\begin{align*}
\int_{\Omega_{R}\setminus\Sigma}|\nabla v_{1}^{\alpha}|^{2}&\leq C\int_{\Omega_{R}\setminus\Sigma}\frac{dx}{(\varepsilon+d^2(x'))^2}\leq C\int_{B'_{R_{}}\setminus\Sigma'}\frac{dx'}{\varepsilon+d^2(x')}\\
&\leq C\int_{R_{0}}^{R}\frac{Cr^{d-2}}{\varepsilon+(r-R_{0})^{2}}dr\leq C\int_{0}^{R-R_0}\frac{(t+R_0)^{d-2}}{\varepsilon+t^2}dt\\
&\leq CR_0^{d-2}\int_{0}^{R-R_0}\frac{1}{\varepsilon+t^2}dt+C\int_{0}^{R-R_0}\frac{{t^{d-2}}}{\varepsilon+t^2}dt\\
&\leq C
\begin{cases}
\frac{|\Sigma'|^{\frac{1}{2}}}{\sqrt{\varepsilon}}+|\log\varepsilon|,&d=3,\\
\frac{|\Sigma'|^{\frac{d-2}{d-1}}}{\sqrt{\varepsilon}}+1,&d\geq4,
\end{cases}
\end{align*}
which, together with \eqref{dfirst_term} and \eqref{d-termd}, implies that
\begin{align*}
a_{11}^{\alpha\alpha}&\leq
C\int_{\Omega}|\nabla v_{1}^{\alpha}|^2dx\leq C
\begin{cases}
\frac{|\Sigma'|}{\varepsilon}+|\log\varepsilon|,&d=3,\\
\frac{|\Sigma'|}{\varepsilon}+1,&d\geq4.
\end{cases}
\end{align*}

On the other hand,
\begin{align*}
a_{11}^{\alpha\alpha}&=\int_{\Omega}(\mathbb{C}^0e(v_{1}^{\alpha}), e(v_{1}^{\alpha}))dx\geq \frac{1}{C}\int_{\Omega}|e(v_{1}^{\alpha})|^2dx\\
&\geq\frac{1}{C}\int_{\Omega}|e_{\alpha d}(v_{1}^{\alpha})|^2dx\geq \frac{1}{C}\int_{\Omega_R}|\partial_{x_d}(v_{1}^{\alpha})^\alpha|^2dx.
\end{align*}
By the reasoning as in Lemma \ref{lem_a11}, noticing that $(v_{1}^{\alpha})^\alpha|_{\partial{D}_{1}}=\bar{v}|_{\partial{D}_{1}}=1, (v_{1}^{\alpha})^\alpha|_{\partial{D}_2}
=\bar{v}|_{\partial D_2}=0$, and recalling the definition of $\bar{v}$, $\bar{v}(x',x_d)$ is linear in $x_d$ for any fixed $x'$, so $\bar{v}(x', \cdot)$ is harmonic, hence its energy is minimal, that is,
\begin{align*}
\int_{h_2(x')}^{h_1(x')+{\varepsilon}}|\partial_{x_d}(v_{1}^{\alpha})^\alpha|^2dx_d\geq
\int_{h_2(x')}^{h_1(x')+{\varepsilon}}|\partial_{x_d}\bar{v}|^2dx_d=\frac{1}{\varepsilon+h_1(x')-h_2(x')}.
\end{align*}
Integrating on $B'_R(0')$ for $x'$, we obtain
\begin{align*}
\int_{\Omega_R}|\partial_{x_d}(v_{1}^{\alpha})^\alpha|^2dx&=\int_{B'_R}\int_{h_2(x')}^{h_1(x')+\varepsilon}|\partial_{x_d}(v_{1}^{\alpha})^\alpha|^2dx_ddx'\\
&\geq\int_{{B'_{R}\setminus\Sigma'}}\int_{h_2(x')}^{h_1(x')+\varepsilon}|\partial_{x_d}(v_{1}^{\alpha})^\alpha|^2dx_ddx'+\int_{\Sigma'}\int_0^\varepsilon|\partial_{x_d}(v_{1}^{\alpha})^\alpha|^2dx_ddx'\\
&\geq\frac{1}{C}\int_{{B'_{R}\setminus\Sigma'}}\frac{dx'}{\varepsilon+d^2(x')}+\int_{\Sigma'}\int_0^\varepsilon \frac{1}{C\varepsilon^2}dx_ddx'\\
&\geq \frac{1}{C}\begin{cases}
\frac{|\Sigma'|}{\varepsilon}+|\log\varepsilon|,&d=3,\\
\frac{|\Sigma'|}{\varepsilon}+1,&d\geq4.
\end{cases}
\end{align*}
Thus, \eqref{glem4.1-13} and \eqref{glem4.1-1} are obtained.

For $\alpha=d+1,\cdots,\frac{d(d+1)}{2}$, we have
\begin{align*}
a_{11}^{\alpha\alpha}&=\int_{\Omega}(\mathbb{C}^0e(v_{1}^{\alpha}), e(v_{1}^{\alpha}))dx\leq C\int_{\Omega}|\nabla v_{1}^{\alpha}|^2dx\\
&\leq\,C\int_{\Omega_R}\frac{|x'|^2}{(\varepsilon+d^2(x'))^2}dx+C\\
&\leq\,C\int_{\Sigma'}\frac{R^2_0}{\varepsilon}dx'+C\int_{{B'_R}\setminus\Sigma'}\frac{(d(x')+R_0)^2}{\varepsilon+d^2(x')}dx'+C\\
&\leq \frac{CR_0^{d+1}}{\varepsilon}+C\int_{{B'_R}\setminus\Sigma'}\frac{d^2(x')}{\varepsilon+d^2(x')}dx'+CR_{0}^{2}\int_{{B'_R}\setminus\Sigma'}\frac{dx'}{\varepsilon+d^2(x')}+C\\
&\leq C\begin{cases}
\frac{|\Sigma'|^2}{\varepsilon}+|\Sigma'||\log\varepsilon|+1,&d=3,\\
\frac{|\Sigma'|^{\frac{d+1}{d-1}}}{\varepsilon}+1,&d\geq4.
\end{cases}
\end{align*}

On the other hand, by observation, there exist some $1\leq j<d$ and $1\leq k<d$ such that $(\tilde{v}_1^\alpha)^j=x_k\bar{v}$. For such $j$, we have
\begin{align*}
a_{11}^{\alpha\alpha}&=\int_{\Omega}(\mathbb{C}^0e(v_{1}^{\alpha}), e(v_{1}^{\alpha}))dx\geq \frac{1}{C}\int_{\Omega}|e(v_{1}^{\alpha})|^2dx\\
&\geq\frac{1}{C}\int_{\Omega}|e_{j d}(v_{1}^{\alpha})|^2dx\geq \frac{1}{C}\int_{\Omega_R}|\partial_{x_d}(v_{1}^{\alpha})^j|^2dx.
\end{align*}
Since $(v_{1}^{\alpha})^j|_{\partial{D}_{1}}=x_k\bar{v}|_{\partial{D}_{1}}=x_k, (v_{1}^{\alpha})^j|_{\partial{D}_2}
=x_k\bar{v}|_{\partial D_2}=0$, and $x_k\bar{v}(x',x_d)$ is linear in $x_d$ for fixed $x'$, so $x_k\bar{v}(x', \cdot)$ is harmonic, and hence it is a minimizer of the energy functional, 
\begin{align*}
\int_{h_2(x')}^{h_1(x')+{\varepsilon}}|\partial_{x_d}(v_{1}^{\alpha})^j|^2dx_d\geq
\int_{h_2(x')}^{h_1(x')+{\varepsilon}}|\partial_{x_d}(x_k\bar{v})|^2dx_d=\frac{x_k^2}{\varepsilon+h_1(x')-h_2(x')}.
\end{align*}
Integrating on $B'_R(0')$ for $x'$, we obtain
\begin{align*}
\int_{\Omega_R}|\partial_{x_d}(v_{1}^{\alpha})^j|^2dx&=\int_{B'_R}\int_{h_2(x')}^{h_1(x')+\varepsilon}|\partial_{x_d}(v_{1}^{\alpha})^j|^2dx_ddx'\\
&\geq\int_{{B'_{R}\setminus\Sigma'}}\int_{h_2(x')}^{h_1(x')+\varepsilon}|\partial_{x_d}(v_{1}^{\alpha})^j|^2dx_ddx'+\int_{\Sigma'}\int_0^\varepsilon|\partial_{x_d}(v_{1}^{\alpha})^j|^2dx_ddx'\\
&\geq\frac{1}{C}\int_{{B'_{R}\setminus\Sigma'}}\frac{x_k^2dx'}{\varepsilon+d^2(x')}+\int_{\Sigma'} \frac{x_k^2}{\varepsilon}dx'\\
&\geq \frac{1}{C}
\left(\frac{|\Sigma'|^{\frac{d+1}{d-1}}}{\varepsilon}+1\right).
\end{align*}
So, we have \eqref{gdd4}.

{\bf Step 2.} Estimate $a_{11}^{\alpha\beta}$ for $\alpha,\beta=1,\cdots,d$ with $\alpha\neq\beta$.

By the definition,
\begin{align*}
a_{11}^{\alpha\beta}&=a_{11}^{\beta\alpha}=-\int_{\partial{D}_{1}}\frac{\partial v_{1}^{\alpha}}{\partial \nu_0}\large\Big|_{+}\cdot\psi_\beta\\
&=-\int_{\partial{D}_{1}}\lambda(\nabla\cdot v_{1}^{\alpha})n_\beta+\mu\left((\nabla v_{1}^{\alpha}+(\nabla v_{1}^{\alpha})^T)\vec{n}\right)_\beta\\
&=-\int_{\partial{D}_{1}\cap B_R}\lambda\left(\sum_{k=1}^d\partial_{x_k} (v_1^{\alpha})^{k}\right)n_\beta
+\mu\sum_{l=1}^d\left(\partial_{x_\beta}(v_1^{\alpha})^{l}+\partial_{x_l}(v_1^{\alpha})^{\beta}\right)n_l+O(1)\\
&=:-\lambda\mathrm{I}_{\alpha\beta}-\mu\mathrm{II}_{\alpha\beta}+O(1),
\end{align*}
where
$$\mathrm{I}_{\alpha\beta}:=\int_{\partial{D}_{1}\cap B_R}\left(\sum_{k=1}^d\partial_{x_k} (v_1^{\alpha})^{k}\right)n_\beta,\quad\mathrm{II}_{\alpha\beta}:=\int_{\partial{D}_{1}\cap B_R} \sum_{l=1}^{d}\left(\partial_{x_\beta}(v_1^{\alpha})^{l}+\partial_{x_l}(v_1^{\alpha})^{\beta}\right)n_l,$$
and
$$\vec{n}=\frac{(\nabla_{x'}h_1(x'), -1)}{\sqrt{1+|\nabla_{x'}h_1(x')|^2}}.$$
Due to (\ref{h1h20}), for $k=1,\cdots,d-1$,
\begin{align}\label{dn} |n_{k}|=\left|\frac{\partial_{x_{k}}h_1(x')}{\sqrt{1+|\nabla_{x'}h_1(x')|^2}}\right|\leq Cd(x'),\quad~\mbox{and}~ |n_d|=\frac{ 1}{\sqrt{1+|\nabla_{x'}h_1(x')|^2}}\leq1.
\end{align}
Then, for $\alpha=1,\cdots,d$, $\beta=1,\cdots,d-1$, it follows from \eqref{v1-bounded1} and \eqref{dn} that
\begin{align*}\label{da-3}
|\mathrm{I}_{\alpha\beta}|\leq&\int_{\partial{D}_{1}\cap B_R}\left|\left(\sum_{k=1}^d\partial_{x_k} (v_1^{\alpha})^{k}\right)n_\beta\right|\nonumber\\
\leq&C\int_{B'_R\setminus\Sigma'}\frac{d(x')}{\varepsilon+d^2(x')}dx'\leq C\int_{{B'_R}\setminus\Sigma'}\frac{dx'}{\sqrt{\varepsilon+d^2(x')}}\\
\leq&C\int_{R_{0}}^{R}\frac{r^{d-2}}{\sqrt{\varepsilon+(r-R_{0})^{2}}}dr\leq C\int_{0}^{R-R_0}\frac{(t+R_0)^{d-2}}{\sqrt{\varepsilon+t^2}}dt\\
\leq&CR_0^{d-2}\int_{0}^{R-R_0}\frac{1}{\sqrt{\varepsilon+t^2}}dt+C\int_{0}^{R-R_0}\frac{{t^{d-2}}}{\sqrt{\varepsilon+t^2}}dt\\
\leq&C\left(|\Sigma'|^{\frac{d-2}{d-1}}|\log\varepsilon|+1\right).
\end{align*}
We devide $\mathrm{II}_{\alpha\beta}$ further into three parts,
\begin{align*}
\mathrm{II}_{\alpha\beta}=&\int_{\partial{D}_{1}\cap B_R} \sum_{l=1}^{d-1}\left(\partial_{x_\beta}(v_1^{\alpha})^{l}+\partial_{x_l}(v_1^{\alpha})^{\beta}\right)n_l+\int_{\partial{D}_{1}\cap B_R} \partial_{x_\beta}(v_1^{\alpha})^{d}n_d+\int_{\partial{D}_{1}\cap B_R} \partial_{x_d}(v_1^{\alpha})^{\beta} n_d\\
=&:\mathrm{II}_{\alpha\beta}^{1}+\mathrm{II}_{\alpha\beta}^{2}+\mathrm{II}_{\alpha\beta}^{3}.
\end{align*}
Then
\begin{align*}
|\mathrm{II}_{\alpha\beta}^{1}|&=\left|\int_{\partial{D}_{1}\cap B_R} \sum_{l=1}^{d-1}\left(\partial_{x_\beta}(v_1^{\alpha})^{l}+\partial_{x_l}(v_1^{\alpha})^{\beta}\right)n_l\right|\\
&\leq C\int_{B'_R\setminus\Sigma'}\frac{d(x')}{\varepsilon+d^2(x')}dx'\leq C\int_{B'_R\setminus\Sigma'}\frac{dx'}{\sqrt{\varepsilon+d^2(x')}}\\
&\leq C\left(|\Sigma'|^{\frac{d-2}{d-1}}|\log\varepsilon|+1\right).
\end{align*}
From \eqref{v1-x'} and \eqref{dn},
\begin{align*}
|\mathrm{II}_{\alpha\beta}^{2}|&=\left|\int_{\partial{D}_{1}\cap B_R} \partial_{x_\beta}(v_1^{\alpha})^{d}n_d\right|\leq\int_{B'_R}\frac{C}{\sqrt{\varepsilon+d^2(x')}}dx'\\
&\leq C\int_{\Sigma'}\frac{dx'}{\sqrt{\varepsilon}}+C\int_{B'_R\setminus\Sigma'}\frac{dx'}{\sqrt{\varepsilon+d^2(x')}}\\
&\leq C\left(\frac{|\Sigma'|}{\sqrt{\varepsilon}}+|\Sigma'|^{\frac{d-2}{d-1}}|\log\varepsilon|+1\right).
\end{align*}
By the definition of $\tilde{v}_1^\alpha$, using the fact that $(\tilde{v}_1^\alpha)^{\beta}=0$ if $\alpha\neq\beta$, and (\ref{dnabla_w_i0}),
\begin{align*}
|\mathrm{II}_{\alpha\beta}^{3}|&=\left|\int_{\partial{D}_{1}\cap B_R}\partial_{x_d} (v_1^{\alpha})^{\beta} n_d\right|\nonumber\\
&\leq\left|\int_{\partial{D}_{1}\cap B_R}\partial_{x_d}( (v_1^{\alpha})^{\beta}-(\tilde{v}_1^\alpha)^\beta)n_d\right|\\
&\leq C\int_{\Sigma'}\frac{dx'}{\sqrt{\varepsilon}}+C\int_{B'_R\setminus\Sigma'}\frac{dx'}{\sqrt{\varepsilon+d^2(x')}}\\
&\leq C\left(\frac{|\Sigma'|}{\sqrt{\varepsilon}}+|\Sigma'|^{\frac{d-2}{d-1}}|\log\varepsilon|+1\right).
\end{align*}
Hence
\begin{align*}
|\mathrm{II}_{\alpha\beta}|
\leq C\left(\frac{|\Sigma'|}{\sqrt{\varepsilon}}+|\Sigma'|^{\frac{d-2}{d-1}}|\log\varepsilon|+1\right).
\end{align*}
Therefore for $ \alpha,\beta=1,\cdots,d$ with $\alpha\neq\beta$, we have
\begin{align*}
|a_{11}^{\alpha\beta}|=|a_{11}^{\beta\alpha}|&\leq |\lambda||\mathrm{I}_{\alpha\beta}|+|\mu||\mathrm{II}_{\alpha\beta}|+C\leq C\left(\frac{|\Sigma'|}{\sqrt{\varepsilon}}+|\Sigma'|^{\frac{d-2}{d-1}}|\log\varepsilon|+1\right).
\end{align*}
Thus, \eqref{gdd3} is obtained.

{\bf Step 3.} Estimate of $a_{11}^{\alpha\beta}$ for $\alpha=1,\cdots,d$, $\beta=d+1,\cdots,\frac{d(d+1)}{2}$.

We take the case that $\alpha=1$, $\beta=d+1$ for instance. The other cases are similar. Since $\psi_{d+1}=(-x_{2},x_{1},0,\cdots,0)^{T}$, using the boundedness of $|\nabla{v}_1^{\alpha}|$ on $\partial{D}_{1}\setminus B_R$, we have
\begin{align*}
a_{11}^{1~d+1}=&-\int_{\partial{D}_{1}}\frac{\partial v_{1}^{1}}{\partial \nu_0}\large\Big|_{+}\cdot\psi_{d+1}\\
=&\int_{\partial{D}_{1}\cap B_R}\left(\lambda\big(\sum_{k=1}^d\partial_{x_k} (v_{1}^1)^{k}\big)n_1
+\mu\sum_{l=1}^d\left(\partial_{x_1}(v_{1}^1)^{l}+\partial_{x_l}(v_{1}^1)^{1}\right)n_l\right)x_{2}\\
&-\int_{\partial{D}_{1}\cap B_R}\left(\lambda\big(\sum_{k=1}^d\partial_{x_k} (v_{1}^1)^{k}\big)n_2
+\mu\sum_{l=1}^d\left(\partial_{x_2}(v_{1}^1)^{l}+\partial_{x_l}(v_{1}^1)^{2}\right)n_l\right)x_{1}+O(1)\\
=&:\mathrm{I}_{1~d+1}-\mathrm{II}_{1~d+1}+O(1),
\end{align*}
where
\begin{align*}
\mathrm{I}_{1~d+1}:=&\lambda\int_{\partial{D}_{1}\cap B_R}\big(\sum_{k=1}^d\partial_{x_k} (v_{1}^1)^{k}\big)n_1x_2+\mu\int_{\partial{D}_{1}\cap B_R}\sum_{l=1}^{d-1}\left(\partial_{x_1}(v_{1}^1)^{l}+\partial_{x_l}(v_{1}^1)^{1}\right)n_lx_{2}\\
&+\mu\int_{\partial{D}_{1}\cap B_R}\partial_{x_1}(v_{1}^1)^{d}n_dx_2+\mu\int_{\partial{D}_{1}\cap B_R}\partial_{x_d}(v_{1}^1)^{1}n_dx_2\\
=&:\,\lambda\mathrm{I}_{1~d+1}^{1}+\mu(\mathrm{I}_{1~d+1}^{2}+\mathrm{I}_{1~d+1}^{3}+\mathrm{I}_{1~d+1}^{4}).
\end{align*}
From \eqref{v1-bounded1} and \eqref{dn}, we have
\begin{align*}
|\mathrm{I}_{1~d+1}^1|&=\left|\int_{\partial{D}_{1}\cap B_R}\big(\sum_{k=1}^d\partial_{x_k} (v_{1}^1)^{k}\big)n_1x_2\right|\\
&\leq\int_{B'_R\backslash\Sigma'}\frac{Cd(x')|x_2|}{\varepsilon+d^2(x')}dx'\leq C\int_{B'_R\backslash\Sigma'}\frac{|x_2|}{\sqrt{\varepsilon+d^2(x')}}dx'.
\end{align*}
In view of the fact that $|x_2|\leq|x'|\leq d(x')+R_0$,
\begin{align*}
\int_{B'_R\backslash\Sigma'}\frac{|x_2|}{\sqrt{\varepsilon+d^2(x')}}dx'&\leq\int_{B'_R\backslash\Sigma'}\frac{d(x')+R_0}{\sqrt{\varepsilon+d^2(x')}}dx'\\
&\leq CR_0\int_{R_{0}}^{R}\frac{r^{d-2}}{\sqrt{\varepsilon+(r-R_{0})^{2}}}dr+C\\
&\leq C(|\Sigma'||\log\varepsilon|+1).
\end{align*}
Similarly, by using \eqref{v1-bounded1} and \eqref{dn}, we have
\begin{align*}
|\mathrm{I}_{1~d+1}^2|&=\left|\int_{\partial{D}_{1}\cap B_R}\sum_{l=1}^{d-1}\left(\partial_{x_1}(v_{1}^1)^{l}+\partial_{x_l}(v_{1}^1)^{1}\right)n_lx_{2}\right|\\
&\leq\int_{B'_R\backslash\Sigma'}\frac{Cd(x')|x_2|}{\varepsilon+d^2(x')}dx'\leq C(|\Sigma'||\log\varepsilon|+1).
\end{align*}
Making use of the fact $(\tilde{v}_1^1)^d=0$ and \eqref{dnabla_w_i0}, we obtain
\begin{align*}
|\mathrm{I}_{1~d+1}^{3}|&=\left|\int_{\partial{D}_{1}\cap B_R}\partial_{x_1}(v_{1}^1)^{d}n_dx_2\right|\\
&=\left|\int_{\partial{D}_{1}\cap B_R}\partial_{x_1}( v_{1}^{1}-\tilde{v}_1^1)^dn_dx_2\right|\nonumber\\
&\leq\int_{\Sigma'}\frac{C|x'|}{\sqrt{\varepsilon}}dx'+\int_{B'_R\setminus\Sigma'}\frac{C|x'|}{\sqrt{\varepsilon+d^2(x')}}dx'\\
&\leq C\left(\frac{|\Sigma'|^{\frac{d}{d-1}}}{\sqrt{\varepsilon}}+|\Sigma'||\log\varepsilon|+1\right).
\end{align*}
Similarly, we estimate $\mathrm{I}_{1~d+1}^{4}$.
\begin{align*}
|\mathrm{I}_{1~d+1}^{4}|&= \left|\int_{\partial{D}_{1}\cap B_R}\partial_{x_d}(v_{1}^1)^{1}n_dx_2\right|\\
&\leq\left|\int_{\partial{D}_{1}\cap B_R}\partial_{x_d} (\tilde{v}_1^1)^1n_dx_2\right|+\left|\int_{\partial{D}_{1}\cap B_R}\partial_{x_d}( v_{1}^{1}-\tilde{v}_1^1)^1n_dx_2\right|\nonumber\\
&=:\mathrm{I}_{1~d+1}^{4(1)}+\mathrm{I}_{1~d+1}^{4(2)}.
\end{align*}
It is easy to obtain that
\begin{align*}
|\mathrm{I}_{1~d+1}^{4(1)}|&=\left|\int_{\partial{D}_{1}\cap B_R}\partial_{x_d} (\tilde{v}_1^1)^1n_dx_2\right|\\
&=\left|\int_{\partial{D}_{1}\cap\overline{\Sigma}}\frac{x_2n_d}{\varepsilon}+\int_{\partial{D}_{1}\cap B_R\backslash\overline{\Sigma}}\frac{x_2n_d}{\varepsilon+d^2(x')}\right|\\
&\leq\int_{ B'_R\backslash\Sigma'}\frac{|x'|}{\varepsilon+d^2(x')}dx'
\leq \int_{ B'_R\backslash\Sigma'}\frac{R_0}{\varepsilon+d^2(x')}dx'+\int_{ B'_R\backslash\Sigma'}\frac{dx'}{\sqrt{\varepsilon+d^2(x')}}\\
&\leq C\left( \frac{|\Sigma'|}{\sqrt{\varepsilon}}+|\Sigma'|^{\frac{d-2}{d-1}}|\log\varepsilon|+1\right),
\end{align*}
and
\begin{align*}
|\mathrm{I}_{1~d+1}^{4(2)}|&=\left|\int_{\partial{D}_{1}\cap B_R}\partial_{x_d}( v_{1}^{1}-\tilde{v}_1^1)^1n_dx_2\right|\\
&\leq\int_{\Sigma'}\frac{C|x'|}{\sqrt{\varepsilon}}dx'+\int_{B'_R\setminus\Sigma'}\frac{C|x'|}{\sqrt{\varepsilon+d^2(x')}}dx'\leq C\left(\frac{|\Sigma'|^{\frac{d}{d-1}}}{\sqrt{\varepsilon}}+|\Sigma'||\log\varepsilon|+1\right).
\end{align*}
Therefore,
\begin{align*}
|\mathrm{I}_{1~d+1}^{4}|\leq C\left(\frac{|\Sigma'|^{\frac{d}{d-1}}}{\sqrt{\varepsilon}}+|\Sigma'||\log\varepsilon|+1\right).
\end{align*}
The estimate of $\mathrm{II}_{1~d+1}$ is the same as $\mathrm{I}_{1~d+1}$. Combining these estimates, we have
\begin{align*}
|a_{11}^{1~d+1}|\leq C\left(\frac{|\Sigma'|^{\frac{d}{d-1}}}{\sqrt{\varepsilon}}+|\Sigma'||\log\varepsilon|+1\right).
\end{align*}
So we have \eqref{gd1d4}.

{\bf Step 4.} Estimates of $a_{11}^{\alpha\beta}$, $\alpha, \beta=d+1,\cdots, \frac{d(d+1)}{2}$ with $\alpha\neq\beta$.

We only estimate $a_{11}^{d+1~d+2}$ with
$\tilde{v}_1^{d+1}=(-\bar{v}x_2, \bar{v}x_1, 0,\cdots,0)^T$ and $\psi_{d+2}=(-x_{3},0,x_{1}, 0, \cdots, 0)^{T}$ for instance. The other cases are the same.
\begin{align*}
a_{11}^{d+1~d+2}=&-\int_{\partial{D}_{1}}\frac{\partial v_1^{d+1}}{\partial \nu_0}\large\Big|_{+}\cdot\psi_{d+2}\\
=&\int_{\partial{D}_{1}\cap B_R}\left(\lambda\big(\sum_{k=1}^d\partial_{x_k} (v_1^{d+1})^k\big)n_1
+\mu\sum_{l=1}^d\left(\partial_{x_1}(v_1^{d+1})^l+\partial_{x_l}(v_1^{d+1})^1\right)n_l\right)x_{3}\\
&-\int_{\partial{D}_{1}\cap B_R}\left(\lambda\big(\sum_{k=1}^d\partial_{x_k} (v_1^{d+1})^k\big)n_3
+\mu\sum_{l=1}^d\left(\partial_{x_3}(v_1^{d+1})^l+\partial_{x_l}(v_1^{d+1})^3\right)n_l\right)x_{1}+O(1)\\
=&:\mathrm{I}_{d+1~d+2}-\mathrm{II}_{d+1~d+2}+O(1).
\end{align*}

We divide it into two cases: (i) for $d=3$, (ii) for $d\geq4$.

(i) For $d=3$, in view of that $|x_3|=|\varepsilon+h_1(x')|\leq C(\varepsilon+d^2(x'))$,  it is easy to see that
\begin{align*}
|\mathrm{I}_{45}|\leq C\int_{B'_R}|\nabla v_1^4||x_3|dx'\leq C.
\end{align*}
Further divide
\begin{align*}
\mathrm{II}_{45}:=&\lambda\int_{\partial{D}_{1}\cap B_R}\sum_{k=1}^{2}\partial_{x_k} (v_1^4)^kn_3x_1+\mu\int_{\partial{D}_{1}\cap B_R}\sum_{l=1}^{2}(\partial_{x_3}(v_1^4)^l+\partial_{x_l}(v_1^4)^3)n_lx_1\\
&+(\lambda+2\mu)\int_{\partial{D}_{1}\cap B_R}\partial_{x_3}(v_1^4)^3n_3x_{1}\\
=&:\,\lambda\mathrm{II}_{45}^{1}+\mu\mathrm{II}_{45}^{2}+(\lambda+2\mu)\mathrm{II}_{45}^{3}.
\end{align*}
By \eqref{e2.4}, we know that
 $$\partial_{x_1}(\tilde{v}_1^4)^1=\partial_{x_1}\bar{v}\cdot (-x_2)=0, \qquad\quad~~~\partial_{x_2}(\tilde{v}_1^4)^2=\partial_{x_2}\bar{v}\cdot x_1=0, \quad\quad~~~~x'\in \Sigma'.$$
Together with \eqref{dnabla_w_i00}, we have
\begin{align*}
|\mathrm{II}_{45}^1|&=\left|\int_{\partial{D}_{1}\cap B_R}\sum_{k=1}^{2}\partial_{x_k} (v_1^4)^kn_3x_1\right|\\
&\leq\left|\int_{\partial{D}_{1}\cap B_R\backslash\Sigma}\left(\sum_{k=1}^{2}\partial_{x_k} (\tilde{v}_1^4)^k\right)n_3x_1\right|+\left|\int_{\partial{D}_{1}\cap B_R}\left(\sum_{k=1}^{2}\partial_{x_k} (v_1^4-\tilde{v}_1^4)^k\right)n_3x_1\right|\\
&\leq\int_{B'_R\backslash\Sigma'}\frac{Cd(x')|x'|^2}{\varepsilon+d^2(x')}dx'+\int_{ B'_R}\frac{C|x'|^2}{\sqrt{\varepsilon+d^2(x')}}dx'\\
&\leq\int_{\Sigma'}\frac{C|x'|^2}{\sqrt{\varepsilon}}dx'+\int_{B'_R\setminus\Sigma'}\frac{C|x'|^2}{\sqrt{\varepsilon+d^2(x')}}dx'\\
&\leq \frac{C|\Sigma'|^{2}}{\sqrt{\varepsilon}}+C|\Sigma'|^{\frac{3}{2}}|\log\varepsilon|+C.
\end{align*}
By \eqref{dv1---bounded1} and \eqref{dn}, we have
\begin{align*}
|\mathrm{II}_{45}^2|&=\left|\int_{\partial{D}_{1}\cap B_R}\sum_{l=1}^{2}(\partial_{x_3}(v_1^4)^l+\partial_{x_l}(v_1^4)^3)n_lx_1\right|\\
&\leq\int_{B'_R\backslash\Sigma'}\frac{Cd(x')|x'|^2}{\varepsilon+d^2(x')}dx'\leq C(|\Sigma'|^{\frac{3}{2}}|\log\varepsilon|+1).
\end{align*}
Since $\partial_{x_3}(\tilde{v}_1^4)^3=0$,
\begin{align*}
|\mathrm{II}_{45}^3|&=\left|\int_{\partial{D}_{1}\cap B_R}\partial_{x_3}(v_1^4)^3n_3x_{1}\right|\\
&=\left|\int_{\partial{D}_{1}\cap B_R}\partial_{x_3}(v_1^4-\tilde{v}_1^4)^3n_3x_{1}\right|\\
&\leq\int_{ \Sigma'}\frac{C|x'|^2}{\sqrt{\varepsilon}}dx'+\int_{B'_R\backslash\Sigma'}\frac{C|x'|^2}{\sqrt{\varepsilon+d^2(x')}}dx'\\
&\leq \frac{C|\Sigma'|^{2}}{\sqrt{\varepsilon}}+C|\Sigma'|^{\frac{3}{2}}|\log\varepsilon|+C.
\end{align*}
Combining these estimates, thus
\begin{align*}
|a_{11}^{45}|\leq \frac{C|\Sigma'|^{2}}{\sqrt{\varepsilon}}+C|\Sigma'|^{\frac{3}{2}}|\log\varepsilon|+C.
\end{align*}

(ii) For $d\geq4$, denote
\begin{align*}
\mathrm{I}_{d+1~d+2}:=&\lambda\int_{\partial{D}_{1}\cap B_R}\sum_{k=1}^{d}\partial_{x_k} (v_1^{d+1})^kn_2
x_{3}+\mu\int_{\partial{D}_{1}\cap B_R}\sum_{l=1}^{d-1}\left(\partial_{x_1}(v_1^{d+1})^l+\partial_{x_l}(v_1^{d+1})^1\right)n_lx_{3}\\
&+\mu\int_{\partial{D}_{1}\cap B_R}\partial_{x_1}(v_1^{d+1})^dn_dx_{3}+\mu\int_{\partial{D}_{1}\cap B_R}\partial_{x_d}(v_1^{d+1})^1n_dx_{3}\\
=:&\lambda\mathrm{I}_{d+1~d+2}^1+\mu(\mathrm{I}_{d+1~d+2}^2+\mathrm{I}_{d+1~d+2}^3+\mathrm{I}_{d+1~d+2}^4).
\end{align*}
By (\ref{dn}),
\begin{align*}
|\mathrm{I}_{d+1~d+2}^1|&=\left|\int_{\partial{D}_{1}\cap B_R}\sum_{k=1}^{d}\partial_{x_k} (v_1^{d+1})^kn_2
x_{3}\right|\\
&\leq\int_{B_R'\setminus\Sigma'}\frac{Cd(x')|x'|^2}{\varepsilon+d^2(x')}dx'\leq C(|\Sigma'|^{\frac{d}{d-1}}|\log\varepsilon|+1).
\end{align*}
By \eqref{dv1---bounded1} and \eqref{dn},
\begin{align*}
|\mathrm{I}_{d+1~d+2}^2|&=\left|\int_{\partial{D}_{1}\cap B_R}\sum_{l=1}^{d-1}\left(\partial_{x_1}(v_1^{d+1})^l+\partial_{x_l}(v_1^{d+1})^1\right)n_lx_{3}\right|\\
&\leq\int_{B'_R\backslash\Sigma'}\frac{Cd(x')|x'|^2}{\varepsilon+d^2(x')}dx'\leq C(|\Sigma'|^{\frac{d}{d-1}}|\log\varepsilon|+1).
\end{align*}
Since $(\tilde{v}_1^{d+1})^d=0$, it follows that
\begin{align*}
|\mathrm{I}_{d+1~d+2}^3|&\leq\left|\int_{\partial{D}_{1}\cap B_R} \partial_{x_1}(v_1^{d+1}-\tilde{v}_1^{d+1})^dn_dx_{3}\right|\\
&\leq\int_{B_R'}\frac{C|x'|^2}{\sqrt{\varepsilon+d^2(x')}}dx'\leq \frac{C|\Sigma'|^{\frac{d+1}{d-1}}}{\sqrt{\varepsilon}}+C|\Sigma'|^{\frac{d}{d-1}}|\log\varepsilon|+C.
\end{align*}
Similarly, using the symmetry,
\begin{align*}
|\mathrm{I}_{d+1~d+2}^4|&=\left|\int_{\partial{D}_{1}\cap B_R}\partial_{x_d}(v_1^{d+1})^1n_dx_{3}\right|\\
&\leq\left|\int_{\partial{D}_{1}\cap B_R}\partial_{x_d}(\tilde{v}_1^{d+1})^1n_dx_{3}\right|+\left|\int_{\partial{D}_{1}\cap B_R}\partial_{x_d}(v_1^{d+1}-\tilde{v}_1^{d+1})^1n_dx_{3}\right|\\
&\leq\int_{B_R'\setminus\Sigma'}\frac{C|x'|^2}{\varepsilon+d^2(x')}dx'+\int_{B_R'}\frac{C|x'|^2}{\sqrt{\varepsilon+d^2(x')}}dx'\\
&\leq \frac{C|\Sigma'|^{\frac{d+1}{d-1}}}{\sqrt{\varepsilon}}+C|\Sigma'|^{\frac{d}{d-1}}|\log\varepsilon|+C.
\end{align*}
Therefore,
\begin{align*}
|\mathrm{I}_{d+1~d+2}|\leq \frac{C|\Sigma'|^{\frac{d+1}{d-1}}}{\sqrt{\varepsilon}}+C|\Sigma'|^{\frac{d}{d-1}}|\log\varepsilon|+C.
\end{align*}
$\mathrm{II}_{d+1~d+2}$ is similar to $\mathrm{I}_{d+1~d+2}$. Then we have
\begin{align*}
|a_{11}^{d+1~d+2}|\leq \frac{C|\Sigma'|^{\frac{d+1}{d-1}}}{\sqrt{\varepsilon}}+C|\Sigma'|^{\frac{d}{d-1}}|\log\varepsilon|+C.
\end{align*}
Thus, \eqref{gd4d5} is obtained. 

The proof of Lemma \ref{glemma4.1} is completed.
\end{proof}

\begin{remark}
We would like to remark that our method can be used to deal with more general cases, say $\Sigma'=\{x'\in\mathbb{R}^{d-1}~|~\sum_{i=1}^{d-1}(\frac{|x_{i}|}{a_{i}})^{m}\leq1, ~a_{i}>0,~m\geq2\}$. In the following, we will present the idea below in the contex where the flat domain $\Sigma'$ is an ellipse in dimension $d=3$, and show the estimates in Lemma \ref{glemma4.1} still hold, and so Theorem \ref{thmd}.
\end{remark}
\begin{proof}
 We suppose that $\Sigma'$ is symmetric about $x_1$, $x_2$, respectively. Moreover, we assume that for $x_0'\in\partial\Sigma'$, $$(h_1-h_2)(x_0')=0,\quad\nabla_{x'}(h_1-h_2)(x_0')=0,$$  and
$$(h_1-h_2)(x')=\frac{1}{2}(x'-x_0')^T(\nabla^2_{x'}(h_1-h_2)(x_0'))(x'-x_0')+o((x'-x_0')^2).$$

We only estimate  $a_{11}^{56}$ and $a_{11}^{14}$.
\begin{align*}
a_{11}^{56}=&-\int_{\partial{D}_{1}}\frac{\partial v_1^5}{\partial \nu_0}\large\Big|_{+}\cdot\psi_{6}\\
=&-\int_{\partial{D}_{1}}\Big(\lambda(\nabla\cdot v_1^5)\vec{n}+\mu(\nabla v_1^5+(\nabla v_1^5)^T)\vec{n}\Big)\cdot(0,-x_{3},x_{2})^{T}\\
=&\int_{\partial{D}_{1}\cap B_R}\left(\lambda\big(\sum_{k=1}^3\partial_{x_k} (v_1^5)^k\big)n_2
+\mu\sum_{l=1}^3\left(\partial_{x_2}(v_1^5)^l+\partial_{x_l}(v_1^5)^2\right)n_l\right)x_{3}\\
&-\int_{\partial{D}_{1}\cap B_R}\left(\lambda\big(\sum_{k=1}^3\partial_{x_k} (v_1^5)^k\big)n_3
+\mu\sum_{l=1}^3\left(\partial_{x_3}(v_1^5)^l+\partial_{x_l}(v_1^5)^3\right)n_l\right)x_{2}+O(1)\\
=&:\mathrm{I}_{56}-\mathrm{II}_{56}+O(1).
\end{align*}
In view of that $|x_3|\leq C(\varepsilon+d^2(x'))$ for $x'\in B'_R$, we have
\begin{align*}
|\mathrm{I}_{56}|\leq C\int_{B'_R}|\nabla v_1^5||x_3|dx'\leq C.
\end{align*}
Denote
\begin{align*}
\mathrm{II}_{56}:=&\,\lambda\int_{\partial{D}_{1}\cap B_R}\left(\sum_{k=1}^{2}\partial_{x_k} (v_1^5)^k\right)n_3x_2+\mu\int_{\partial{D}_{1}\cap B_R}\sum_{l=1}^{2}(\partial_{x_3} (v_1^5)^l+\partial_{x_l} (v_1^5)^3)n_lx_2\\
&\qquad+(\lambda+2\mu)\int_{\partial{D}_{1}\cap B_R}\partial_{x_3}(v_1^5)^3n_3x_2\\
=&:\lambda\mathrm{II}_{56}^1+\mu\mathrm{II}_{56}^2+(\lambda+2\mu)\mathrm{II}_{56}^3.
\end{align*}
The estimates of $\mathrm{II}_{56}^1$ and $\mathrm{II}_{56}^2$ are similar to $\mathrm{II}_{45}^1$ and $\mathrm{II}_{45}^2$, respectively, we only need to estimate $\mathrm{II}_{56}^3$.
\begin{align*}
|\mathrm{II}_{56}^3|&=\left|\int_{\partial{D}_{1}\cap B_R}\partial_{x_3}(v_1^5)^3n_3x_2\right|\\
&\leq\left|\int_{\partial{D}_{1}\cap\overline{\Sigma}}\partial_{x_3} (\tilde{v}_1^5)^3n_3x_2\right|+\left|\int_{\partial{D}_{1}\cap B_R\backslash\Sigma}\partial_{x_3} (\tilde{v}_1^5)^3n_3x_2\right|+\left|\int_{\partial{D}_{1}\cap B_R}\partial_{x_3} (v_1^5-\tilde{v}_1^5)^3n_3x_2\right|\\
&\leq \left|\int_{\Sigma'}\frac{Cx_1x_2dx'}{\varepsilon}\right|+\left|\int_{ B_R'\backslash\Sigma'}\frac{Cx_1x_2dx'}{\varepsilon+h_1(x')-h_2(x')}\right|+\left|\int_{B'_R}\frac{C|x'|^2dx'}{\sqrt{\varepsilon+d^2(x')}}\right|\\
&=:|\mathrm{II}_{56}^{3(1)}|+|\mathrm{II}_{56}^{3(2)}|+|\mathrm{II}_{56}^{3(3)}|.
\end{align*}
Obviously,
\begin{align*}
|\mathrm{II}_{56}^{3(1)}|=\left|\int_{\Sigma'}\frac{Cx_1x_2dx'}{\varepsilon}\right|=0,
\end{align*}
and
\begin{align*}
|\mathrm{II}_{56}^{3(3)}|&\leq\int_{B'_R}\frac{C(R_0^2+d^2(x'))}{\sqrt{\varepsilon+d^2(x')}}dx'\leq\frac{C|\Sigma'|^2}{\sqrt{\varepsilon}}+C|\Sigma'|^{\frac{3}{2}}|\log\varepsilon|+C.
\end{align*}

\begin{figure}[t]
\begin{minipage}[c]{0.9\linewidth}
\centering
\includegraphics[width=2.5in]{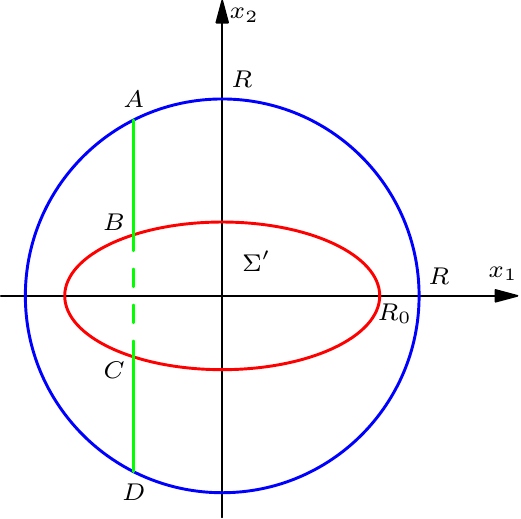}
\caption{\small $\Sigma'$ is an ellipse in dimension two.}
\label{fig2}
\end{minipage}
\end{figure}
Now we estimate $\mathrm{II}_{56}^{3(2)}$. Since  $\Sigma'$ is symmetric about $x_1$, $x_2$ respectively, we assume that the corresponding partial boundaries of $\partial\Sigma'$ is $g_1(x_1)$ and $g_2(x_1)$. Obviously, $g_2(x_1)=-g_1(x_1)$. For fixed $x_1$, the points of intersection to $\Sigma'$ and $B_R$ are $A,B,C,D$, see Figure \ref{fig2}, then we have
\begin{align*}
\mathrm{II}_{56}^{3(2)}=&\int_{-R_0}^{R_0}x_1dx_1\int_{\overline{AB}\cup\overline{CD}}\frac{x_2dx_2}{\varepsilon+(h_1-h_2)(x')}
+\int_{-R}^{-R_0}x_1dx_1\int_{-\sqrt{R^2-x_1^2}}^{\sqrt{R^2-x_1^2}}\frac{x_2dx_2}{\varepsilon+(h_1-h_2)(x')}\\
&\quad+\int_{R_0}^Rx_1dx_1\int_{-\sqrt{R^2-x_1^2}}^{\sqrt{R^2-x_1^2}}\frac{x_2dx_2}{\varepsilon+(h_1-h_2)(x')}.
\end{align*}
By Taylor expansion, for $x'\in\overline{AB}$,
 $$(h_1-h_2)(x_1,x_2)=h_{B}(x_1)(x_2-g_1(x_1))^2+o((x_2-g_1(x_1))^2),\quad h_{B}(x_1):=\frac{1}{2}\partial_{x_2x_2}(h_1-h_2)(B),$$
and for $x'\in\overline{CD}$,
 $$(h_1-h_2)(x_1,x_2)=h_{C}(x_1)(x_2-g_2(x_1))^2+o((x_2-g_2(x_1))^2),\quad h_{C}(x_1):=\frac{1}{2}\partial_{x_2x_2}(h_1-h_2)(C).$$
Then
\begin{align*}
&\quad\int_{\overline{AB}\cup\overline{CD}}\frac{x_2}{\varepsilon+(h_1-h_2)(x')}dx_2\\
&=\int_B^A\frac{x_2dx_2}{\varepsilon+h_{B}(x_1)(x_2-g_1(x_1))^2}+\int_D^C\frac{x_2dx_2}{\varepsilon+h_{C}(x_1)(x_2-g_2(x_1))^2}\\
&\quad +\int_B^Ax_2\left[\frac{1}{\varepsilon+(h_1-h_2)(x')}-\frac{1}{\varepsilon+h_{B}(x_1)(x_2-g_1(x_1))^2}\right]dx_2\\
&\quad +\int_D^Cx_2\left[\frac{1}{\varepsilon+(h_1-h_2)(x')}-\frac{1}{\varepsilon+h_{C}(x_1)(x_2-g_2(x_1))^2}\right]dx_2\\
&=:\mathrm{II}_{56}^{3(21)}+\mathrm{II}_{56}^{3(22)}+\mathrm{II}_{56}^{3(23)}+\mathrm{II}_{56}^{3(24)}.
\end{align*}
We only deal with the integral on the segment $\overline{AB}$, that on $\overline{CD}$ is the same.
\begin{align*}
\mathrm{II}_{56}^{3(21)}=&\int_{g_1(x_1)}^{\sqrt{R^2-x_1^2}}\frac{x_2}{\varepsilon+h_{B}(x_1)(x_2-g_1(x_1))^2}dx_2\\
=&\int_{g_1(x_1)}^{\sqrt{R^2-x_1^2}}\frac{(x_2-g_1(x_1))+g_1(x_1)}{\varepsilon+h_{B}(x_1)(x_2-g_1(x_1))^2}dx_2.
\end{align*}
After direct calculation, we can obtain
\begin{align*}
\mathrm{II}_{56}^{3(21)}=&\,\frac{1}{2h_{B}(x_1)}\left[\ln\big(\varepsilon+h_{B}(x_1)(\sqrt{R^2-x_1^2}-g_1(x_1))^2\big)-\log\varepsilon\right]\\
&+\frac{g_1(x_1)}{\sqrt{h_{B}(x_1)\varepsilon}}\arctan\frac{\sqrt{h_{B}(x_1)}(\sqrt{R^2-x_1^2}-g_1(x_1))}{\sqrt{\varepsilon}}.
\end{align*}
It is easy to see that $$\frac{1}{C}\leq h_{B}(x_1), h_{C}(x_1)\leq C.$$
So
\begin{align*}
|\mathrm{II}_{56}^{3(21)}+\mathrm{II}_{56}^{3(22)}|\leq \frac{Cg_1(x_1)}{\sqrt{\varepsilon}}+C|\log\varepsilon|.
\end{align*}
Note that $|g_1(x_1)|\leq C|x_1|$, we have
\begin{align*}
\left|\int_{-R_0}^{R_0}x_1(\mathrm{II}_{56}^{3(21)}+\mathrm{II}_{56}^{3(22)})dx_1\right|&\leq C\left(\int_{-R_0}^{R_0}\frac{|x_1|^2}{\sqrt{\varepsilon}}dx_1+\int_{-R_0}^{R_0}|x_1||\log\varepsilon|dx_1\right)\\
&\leq \frac{C|\Sigma'|^{\frac{3}{2}}}{\sqrt{\varepsilon}}+C|\Sigma'||\log\varepsilon|.
\end{align*}

We remark that if $\Sigma'=\{0'\}$, then it is easy to see that
$$\mathrm{II}_{56}^{3(21)}+\mathrm{II}_{56}^{3(22)}=0.$$

As for the terms $\mathrm{II}_{56}^{3(23)}$ and $\mathrm{II}_{56}^{3(24)}$,   similar to estimate $\mathrm{II}_{23}^{2}$ in Step 5 of the proof of Lemma \ref{lem_a11},  we have
\begin{align*}
|\mathrm{II}_{56}^{3(23)}|&=\left|\int^{\sqrt{R^2-|x_1|^2}}_{g_1(x_1)}x_2\left[\frac{1}{\varepsilon+(h_1-h_2)(x')}-
\frac{1}{\varepsilon+h_{B}(x_1)(x_2-g_1(x_1))^2}\right]dx_2\right|\\
&=\left|\int^{\sqrt{R^2-|x_1|^2}}_{g_1(x_1)}\frac{x_2 o((x_2-g_1(x_1))^2)}{(\varepsilon+(h_1-h_2)(x'))(\varepsilon+h_{B}(x_1)(x_2-g_1(x_1))^2)}dx_2\right|\\
&\leq C\int^{\sqrt{R^2-|x_1|^2}}_{g_1(x_1)}\frac{ o((x_2-g_1(x_1))^3)+|g_1(x_1)|o((x_2-g_1(x_1))^2)}{(\varepsilon+(x_2-g_1(x_1))^2)^2}dx_2\\
&\leq C\left[|g_1(x_1)|o\left(\frac{1}{\sqrt{\varepsilon}}\right)+o\left(|\log\varepsilon|\right)\right],
\end{align*}
which, together with $|g_1(x_1)|\leq C|x_1|$, implies that
\begin{align*}
\int_{\Sigma'}x_1\mathrm{II}_{56}^{3(23)}dx_1
= o\left(\frac{1}{\sqrt{\varepsilon}}\right).
\end{align*}
By the same technique, we can obtain that
\begin{align*}
\int_{-\sqrt{R^2-x_1^2}}^{\sqrt{R^2-x_1^2}}\frac{x_2dx_2}{\varepsilon+(h_1-h_2)(x')}= o\left(|\log\varepsilon|\right).
\end{align*}
Therefore,
\begin{align*}
\int_{R_0}^{R}x_1dx_1\int_{-\sqrt{R^2-x_1^2}}^{\sqrt{R^2-x_1^2}}\frac{x_2dx_2}{\varepsilon+(h_1-h_2)(x')}= o\left(|\log\varepsilon|\right).
\end{align*}
Combining these estimates above, we have
\begin{align*}
|\mathrm{II}_{56}^{3(2)}|
\leq\begin{cases}\frac{C|\Sigma'|^{\frac{3}{2}}}{\sqrt{\varepsilon}}+C|\Sigma'||\log\varepsilon|+C,&|\Sigma'|\neq 0,\\
C,&|\Sigma'|= 0.
\end{cases}
\end{align*}
Therefore, we have
\begin{align*}
|a_{11}^{56}|\leq  \frac{C|\Sigma'|^2}{\sqrt{\varepsilon}}+C|\Sigma'|^{\frac{3}{2}}|\log\varepsilon|+C.
\end{align*}
Thus, we have the same estimate \eqref{gd4d5} for $a_{11}^{56}$.

As for $a_{11}^{14}$, we only need to estimate $\mathrm{I}_{14}^{4(1)}$ as in Step 3 of the proof of Lemma \ref{glemma4.1} for $d=3$,
\begin{align*}
|\mathrm{I}_{14}^{4(1)}|&\leq\left|\int_{\partial{D}_{1}\cap\overline{\Sigma}}\frac{Cx_2n_3}{\varepsilon}+\int_{\partial{D}_{1}\cap B_R\backslash\Sigma}\frac{Cx_2}{\varepsilon+h_1(x')-h_2(x')}n_3\right|\\
&\leq\left|\int_{B'_R\backslash\Sigma'}\frac{Cx_2}{\varepsilon+h_1(x')-h_2(x')}dx'\right|.
\end{align*}
Similar to  $\mathrm{II}_{56}^{3(2)}$, we have
\begin{align*}
|\mathrm{I}_{14}^{4(1)}|=&\left|\int_{-R_0}^{R_0}dx_1\int_{\overline{AB}\cup\overline{CD}}\frac{x_2dx_2}{\varepsilon+(h_1-h_2)(x')}
+\int_{-R}^{-R_0}dx_1\int_{-\sqrt{R^2-x_1^2}}^{\sqrt{R^2-x_1^2}}\frac{x_2dx_2}{\varepsilon+(h_1-h_2)(x')}\right.\\
&+\left.\int_{R_0}^Rdx_1\int_{-\sqrt{R^2-x_1^2}}^{\sqrt{R^2-x_1^2}}\frac{x_2dx_2}{\varepsilon+(h_1-h_2)(x')}\right|.
\end{align*}
Using the same procedure to estimate $a_{11}^{56}$, we have
\begin{align*}
\left|\int_{-R_0}^{R_0}dx_1\int_{\overline{AB}\cup\overline{CD}}\frac{x_2}{\varepsilon+(h_1-h_2)(x')}dx_2\right|\leq \frac{C|\Sigma'|}{\sqrt{\varepsilon}}+C|\Sigma'|^{\frac{1}{2}}|\log\varepsilon|+C.
\end{align*}
Similarly, we can obtain that
\begin{align*}
\int_{R_0}^{R}dx_1\int_{-\sqrt{R^2-x_1^2}}^{\sqrt{R^2-x_1^2}}\frac{x_2dx_2}{\varepsilon+(h_1-h_2)(x')}= o\left(|\log\varepsilon|\right).
\end{align*}
Combining these estimates above, we have
\begin{align*}
|\mathrm{I}_{14}^{4(1)}|
\leq\begin{cases}\frac{C|\Sigma'|^{\frac{3}{2}}}{\sqrt{\varepsilon}}+C|\Sigma'||\log\varepsilon|+C,&|\Sigma'|\neq 0,\\
C,&|\Sigma'|= 0.
\end{cases}
\end{align*}
Thus, we obtain the same upper bounds of $a_{11}^{14}$ as in Lemma \ref{glemma4.1}.
\end{proof}

\section{Two inclusions with relative convexity of order $m$}\label{sec_m}

In this section, we assume that the relative convexity between $D_{1}$ and $D_{2}$ is of order $m$, $m\geq2$, and reveal the relationship between the blow-up rate of $|\nabla u|$ and the order $m$ of the relative convexity between $D_{1}$ and $D_{2}$ to explore the blow-up mechanism of the stress in composite materials.

For $d\geq 2$, under the assumptions of Theorem \ref{thm3}, we also denote
\begin{equation*}
\delta(x'):=\varepsilon+h_{1}(x')-h_{2}(x'),\qquad\forall~(x',x_{d})\in\Omega_{R}.
\end{equation*}
Clearly, at this moment,
\begin{equation}
\delta(x')=\varepsilon+\kappa_{0}|x'|^m.
\end{equation}
Similar to Theorem \ref{thm2.1}, we have

\begin{theorem}\label{thm6.1}
Under the assumptions in Theorem \ref{thm3}, let $v\in H^{1}(\Omega; \mathbb{R}^{d})$ be a weak solution of problem \eqref{eq1.1}. Then for sufficiently small  $0<\varepsilon<1/2$,
\begin{equation}\label{mmainestimate}
|\nabla v(x',x_{d})|
\leq\, \frac{C}{\varepsilon+|x'|^m}\Big|\psi(x',\varepsilon+h_{1}(x'))\Big|+C\|\psi\|_{C^{2}(\partial{D}_{1})},\quad\forall\,x\in \Omega_{R},
\end{equation}
and
$$|\nabla v(x)|
\leq\,C\|\psi\|_{C^{2}(\partial{D}_{1})}\quad\forall\,x\in \Omega\setminus\Omega_{R}.
$$
\end{theorem}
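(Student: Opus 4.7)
I would follow the same strategy as in the proof of Theorem~\ref{thm2.1} (sketched in the Appendix), adapted to the $m$-convex geometry \eqref{h1-h2m}. First, decompose $v = v_1 + \cdots + v_d$ as in Subsection~\ref{subsec_general_bvp}, so each $v_l$ carries only the $l$-th component of the boundary datum and satisfies \eqref{eq_v2.1}. For each $v_l$ introduce the auxiliary field $\tilde v_l$ from \eqref{equ_tildeu}, built from $\bar v(x) = (x_d - h_2(x'))/\delta(x')$. A direct computation using \eqref{h1h2m} yields
\begin{equation*}
\partial_{x_d}\tilde v_l(x) = \frac{\psi^l(x',\varepsilon+h_1(x'))}{\delta(x')}, \qquad |\partial_{x_k}\tilde v_l(x)| \le \frac{C|x'|^{m-1}|\psi^l|}{\delta(x')} + C\|\psi^l\|_{C^1},
\end{equation*}
and $|x'|^{m-1}/\delta(x') \le C/\delta(x')^{1/m}$ is strictly dominated by $1/\delta(x')$ as $\delta(x')\to 0$. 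This already produces the leading $|\psi|/(\varepsilon+|x'|^m)$ piece of \eqref{mmainestimate}, so the remaining task is to show that the correction $w_l := v_l - \tilde v_l$, which solves $\mathcal L_{\lambda,\mu} w_l = -\mathcal L_{\lambda,\mu}\tilde v_l$ in $\Omega$ with zero Dirichlet data, has a gradient absorbable into the same bound.

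Next, I would run the Caccioppoli--Korn iteration of \cite{bll1,bll2} on the narrow region. For each $z'\in B'_R$ define the nested cylinders
\begin{equation*}
\hat\Omega_t(z') := \{(x',x_d) : h_2(x')<x_d<\varepsilon+h_1(x'),\ |x'-z'|<t\}, \qquad 0<t\le R,
\end{equation*}
and set $F(t) := \int_{\hat\Omega_t(z')} |\nabla w_l|^2\,dx$. Testing the equation against $\eta^2 w_l$ for a radial cutoff $\eta$ and combining Korn's inequality with a standard Campanato-type hole-filling argument produces a Caccioppoli recursion
\begin{equation*}
F(t) \le \Bigl(\frac{C}{s-t}\Bigr)^2 \int_{\hat\Omega_s(z')\setminus\hat\Omega_t(z')}|w_l|^2\,dx + C\int_{\hat\Omega_s(z')} |\mathcal L_{\lambda,\mu}\tilde v_l|\,|w_l|\,dx, \qquad 0<t<s.
\end{equation*}
After estimating $|\mathcal L_{\lambda,\mu}\tilde v_l|$ in terms of $|\psi^l|$ and the second derivatives $|\nabla^2 h_i|\le C|x'|^{m-2}$ furnished by \eqref{h1h2m}, and iterating from the starting scale $s=R$ down to the intrinsic horizontal scale $s\sim\delta(z')^{1/m}$, one obtains an $L^2$ energy bound on $\hat\Omega_{\delta(z')^{1/m}}(z')$ of the form $F(\delta(z')^{1/m}) \le C\,\delta(z')^{(d-3)/m}\,|\psi^l|^2 + C\,\delta(z')^{(d+1)/m}\|\psi^l\|_{C^2}^2$. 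Finally, rescaling the cylinder via $x' = z' + \delta(z')^{1/m}y'$, $x_d = \delta(z')\,y_d$ turns the Lam\'e operator into a uniformly elliptic system on a domain comparable to the unit ball, whereupon classical $W^{2,p}$ and Schauder estimates \cite{AD1,AD2} upgrade the $L^2$-energy to a pointwise bound $|\nabla w_l(x)|\le C|\psi^l(x',\varepsilon+h_1(x'))|/\delta(x') + C\|\psi^l\|_{C^2}$, which is already contained in the right-hand side of \eqref{mmainestimate}. The estimate in $\Omega\setminus\Omega_R$ is then a direct consequence of interior and boundary Schauder bounds for elliptic systems.

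The \emph{main obstacle} will be the anisotropic scaling in the iteration. The natural cylinder $\hat\Omega_{\delta(z')^{1/m}}(z')$ has vertical-to-horizontal aspect ratio $\delta(z')^{1-1/m}$, and both the Poincar\'e/Korn constants on the rescaled cylinder and the trace constants used when integrating by parts against the boundary datum must be checked to remain uniform in $\varepsilon$ and $z'$ for every $m\ge 2$. Moreover, the forcing term $\mathcal L_{\lambda,\mu}\tilde v_l$ inherits the factor $|x'|^{m-2}$ from $\nabla^2 h_i$, which is singular at the origin when $m<2$ and borderline integrable when $m$ is close to $d-1$ (the value that produces the $|\log\varepsilon|$ factor announced in Theorem~\ref{thm3}); bookkeeping this factor through the iteration, so that each step genuinely gains a power of $(t/s)$ without losing it back to the forcing, is the delicate point. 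Once the $m$-dependent scaling is correctly tracked, the remainder of the argument parallels the proof of Theorem~\ref{thm2.1} line by line.
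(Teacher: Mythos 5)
Your overall architecture is the right one and matches the paper's: decompose $v=\sum_l v_l$, subtract the explicit profile $\tilde v_l=\psi^l\bar v\,e_l$, derive a Caccioppoli recursion for $F(t)=\int_{\widehat\Omega_t(z')}|\nabla w_l|^2$ using the thin-direction Poincar\'e inequality $\int|w_l|^2\le C\delta^2\int|\nabla w_l|^2$, iterate, and finish with rescaled local elliptic estimates. However, there is a genuine gap in the final step. You stop the iteration at the horizontal scale $s\sim\delta(z')^{1/m}$ and then propose the anisotropic rescaling $x'=z'+\delta^{1/m}y'$, $x_d=\delta\,y_d$, claiming it "turns the Lam\'e operator into a uniformly elliptic system on a domain comparable to the unit ball." It does not: under that change of variables $\partial_{x_d}^2\mapsto\delta^{-2}\partial_{y_d}^2$ while $\partial_{x'}^2\mapsto\delta^{-2/m}\partial_{y'}^2$, so the rescaled operator has ellipticity ratio of order $\delta^{2-2/m}\to0$, and the $W^{2,p}$/Schauder constants you invoke from \cite{AD1,AD2} blow up as $\varepsilon\to0$. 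This is precisely the aspect-ratio obstacle you flag yourself, but the step you propose runs straight into it rather than around it. The paper's proof avoids it by running the iteration all the way down to the \emph{isotropic} cylinder $\widehat\Omega_{\delta(z')}(z')$ (horizontal radius $\delta$, height $\approx\delta$): with step size $t_{i+1}-t_i\sim\delta$ each step contracts by the fixed factor $\tfrac14$, after $k\sim\delta^{-1/m}$ (resp. $\sim|z'|^{-1}$) steps one reaches $F(\delta)\le C\delta^{d-1}\big(|\psi^l(z',\cdot)|^2+\delta\|\psi^l\|_{C^2}^2\big)$, and only then does one rescale isotropically by $\delta$, under which the Lam\'e system is invariant.

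Two smaller points. Your intermediate energy exponent $F(\delta^{1/m})\le C\delta^{(d-3)/m}|\psi^l|^2+\cdots$ is not derived and does not match what the iteration actually yields; the quantity the argument needs is the bound at the isotropic scale quoted above. And your worry about $|x'|^{m-2}$ being singular for $m<2$, or producing the $|\log\varepsilon|$ at $m=d-1$, is misplaced here: the theorem assumes $m\ge2$, and the logarithms in Theorem \ref{thm3} come from the estimates of $a_{11}^{\alpha\beta}$ in Lemma \ref{lemma6.1}, not from the pointwise bound \eqref{mmainestimate}, which carries no log. With the iteration continued down to scale $\delta$ and the isotropic rescaling, the rest of your outline goes through exactly as in the proof of Theorem \ref{thm2.1}.
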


The idea to prove Theorem \ref{thm6.1} is the same as that for Theorem \ref{thm2.1} with the slight modifications necessary. We list the main differences in the Appendix.

We define $\bar{v}$ by \eqref{vvd} as before.
By a direct calculation, we obtain that for $k=1,\cdots, d-1$, and $x\in\Omega_{R}$,
\begin{equation}
|\partial_{x_{k}}\bar{v}(x)|\leq\frac{C|x'|^{m-1}}{\varepsilon+|x'|^m},\qquad |\partial_{x_{d}}\bar{v}(x)|\leq\frac{C}{\varepsilon+|x'|^m},\label{me2.4}
\end{equation}
Define $\tilde{v}_1^\alpha:=\bar{v}\psi_{\alpha}$ as before. For problem (\ref{equ_v1}), taking the boundary data $\psi=\psi_{\alpha}$ and applying Theorem \ref{thm6.1}, we have

\begin{corollary}\label{mcorol3.4}
Assume the above, let $v_{1}^{\alpha}, v_{2}^\alpha, v_{3}\in{H}^1(\Omega; \mathbb{R}^{d})$ be the
weak solutions of \eqref{equ_v1} and \eqref{equ_v3}, respectively. Then for sufficiently small  $0<\varepsilon<1/2$ and $i=1,2$, we have
\begin{equation}\label{mnabla_w_i0}
|\nabla(v_{i}^{\alpha}-\tilde{v}_i^\alpha)(x',x_{d})|\leq\,\frac{C}{\sqrt{\varepsilon+|x'|^m}},\quad  \alpha=1,\cdots,d,\quad\mbox{for}~x\in\Omega_{R},
\end{equation}
\begin{equation}\label{mnabla_w_i00}
|\nabla(v_{i}^{\alpha}-\tilde{v}_i^\alpha)(x',x_{d})|\leq\,\frac{C|x'|}{\sqrt{\varepsilon+|x'|^m}},\quad  \alpha=d+1,\cdots,\frac{d(d+1)}{2},\quad\mbox{for}~x\in\Omega_{R}.
\end{equation}
Consequently, 
\begin{equation}\label{mv1-bounded1}
|\nabla v_{i}^\alpha(x)|\leq\,\frac{C}{\varepsilon+|x'|^m},\qquad\  \alpha=1,\cdots,d,~x\in\Omega_{R};
\end{equation}
\begin{equation}\label{mv1-x'}
|\nabla_{x'}v_{i}^\alpha(x)|\leq\,\frac{C}{\sqrt{\varepsilon+|x'|^m}},\qquad\ \alpha=1,\cdots,d,~x\in\Omega_{R};
\end{equation}
\begin{equation}\label{mv1---bounded1}
|\nabla v_i^{\alpha}(x)|\leq\frac{C(\varepsilon+|x'|)}{\varepsilon+|x'|^m},\qquad\ \alpha=d+1,\cdots,\frac{d(d+1)}{2},~x\in \Omega_{R};
\end{equation}
\begin{equation}\label{mv1--bounded1}
|\nabla v_{i}^\alpha(x)|\leq\,C,\qquad\quad\ \alpha=1,2,\cdots,\frac{d(d+1)}{2},~x\in\Omega\setminus\Omega_{R}; 
\end{equation}
and
$$|\nabla(v_{1}^{\alpha}+v_{2}^\alpha)(x)|\leq\,C\|\varphi\|_{C^{2}(\partial D)},\quad\,\alpha=1,2,\cdots,\frac{d(d+1)}{2},\quad~x\in\Omega;
$$$$|\nabla{v}_{3}(x)|\leq C\|\varphi\|_{C^{2}(\partial D)},\quad\,x\in\Omega.
$$
\end{corollary}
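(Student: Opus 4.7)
The plan is to recognize that this corollary is essentially a specialization of Theorem~\ref{thm6.1} to the specific boundary data $\psi = \psi_\alpha$ from the rigid-motion basis. First I would record that every basis element satisfies $\|\psi_\alpha\|_{C^2(\partial D_1)} \leq C$ universally, and on the upper boundary $\{x_d = \varepsilon + h_1(x')\}$ one has the pointwise trace bound
\begin{equation*}
|\psi_\alpha(x',\varepsilon+h_1(x'))| \leq
\begin{cases}
1, & \alpha=1,\dots,d,\\
C(|x'|+\varepsilon), & \alpha=d+1,\dots,\tfrac{d(d+1)}{2},
\end{cases}
\end{equation*}
since for the rotational basis $\psi_\alpha = x_j e_k - x_k e_j$ each nonzero component is either $x_j$ with $j<d$ (bounded by $|x'|$) or $x_d = \varepsilon+h_1(x')$ (bounded by $C(\varepsilon+|x'|^m)\le C|x'|+C\varepsilon$).

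Next I would apply the intermediate estimate established inside the proof of Theorem~\ref{thm6.1} (the order-$m$ analogue of \eqref{equa2.9}), namely $|\nabla(v_l-\tilde v_l)| \leq C|\psi^l(x',\varepsilon+h_1(x'))|/\sqrt{\varepsilon+|x'|^m} + C\|\psi^l\|_{C^2(\partial D_1)}$, componentwise to each $v_i^\alpha - \tilde v_i^\alpha$. Inserting the two size bounds above yields \eqref{mnabla_w_i0} and \eqref{mnabla_w_i00}, after absorbing the harmless additive constant into the main term by noting that $\varepsilon+|x'|^m \leq C_R$ on $\Omega_R$, so $1 \lesssim 1/\sqrt{\varepsilon+|x'|^m}$, and for the rotational case $\varepsilon/\sqrt{\varepsilon+|x'|^m}\leq\sqrt{\varepsilon}$.

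For the pointwise bounds \eqref{mv1-bounded1}--\eqref{mv1---bounded1}, I would split $\nabla v_i^\alpha = \nabla\tilde v_i^\alpha + \nabla(v_i^\alpha - \tilde v_i^\alpha)$ and compute $\nabla\tilde v_i^\alpha = (\nabla\bar v)\psi_\alpha + \bar v\,\nabla\psi_\alpha$ using \eqref{me2.4}. For $\alpha\leq d$, $\psi_\alpha$ is constant, giving $|\nabla\tilde v_i^\alpha|\leq C/(\varepsilon+|x'|^m)$ and $|\nabla_{x'}\tilde v_i^\alpha|\leq C|x'|^{m-1}/(\varepsilon+|x'|^m)\leq C/\sqrt{\varepsilon+|x'|^m}$ by AM-GM; adding the correction from the previous step gives \eqref{mv1-bounded1} and \eqref{mv1-x'}. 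For $\alpha>d$, $|\psi_\alpha|\leq C|x'|$ and $|\nabla\psi_\alpha|\leq C$ give $|\nabla\tilde v_i^\alpha|\leq C(\varepsilon+|x'|)/(\varepsilon+|x'|^m)$ after using $|x'|^m\leq R^{m-1}|x'|$; adding the correction $C|x'|/\sqrt{\varepsilon+|x'|^m}$ and again using $\sqrt{\varepsilon+|x'|^m}\leq C$ yields \eqref{mv1---bounded1}. The bound \eqref{mv1--bounded1} outside $\Omega_R$ follows from standard interior/boundary Schauder estimates \cite{AD1,AD2}, since the geometry is uniformly regular away from the contact region.

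Finally, the last two assertions are obtained by different devices: for $\nabla(v_1^\alpha+v_2^\alpha)$ I would invoke Corollary~\ref{cor2.2} with $\widetilde\psi = \psi = \psi_\alpha$, so that the decisive factor becomes $|\psi_\alpha(x',\varepsilon+h_1(x'))-\psi_\alpha(x',h_2(x'))|\leq\|\nabla\psi_\alpha\|_{L^\infty}\delta(x')\leq C(\varepsilon+|x'|^m)$, which exactly cancels the singular denominator $\varepsilon+|x'|^m$ and produces the uniform bound; for $\nabla v_3$ the bound is the quoted Theorem~1.1 of \cite{llby}, as already noted in Subsection~\ref{subsec_general_bvp}. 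The only delicate accounting is verifying that the $O(\varepsilon)$ contribution to the rotational trace is negligible, but this is settled uniformly on $\Omega_R$ by $\varepsilon/\sqrt{\varepsilon+|x'|^m}\leq\sqrt{\varepsilon}$, so no genuine obstacle arises; the corollary is a bookkeeping exercise once Theorem~\ref{thm6.1} is in hand.
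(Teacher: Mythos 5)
Your proposal is correct and follows essentially the same route as the paper, which treats this corollary as an immediate specialization of Theorem~\ref{thm6.1} (and its intermediate estimate \eqref{mequa2.9}) to $\psi=\psi_\alpha$, combined with the derivative bounds \eqref{me2.4} for $\tilde v_i^\alpha=\bar v\psi_\alpha$ and the cancellation-of-traces argument for $v_1^\alpha+v_2^\alpha$. The only pedantic caveat is that Corollary~\ref{cor2.2} is stated under the flat-$\Sigma'$ hypotheses, so you should formally invoke its order-$m$ analogue (which follows from the proof of Theorem~\ref{thm6.1} with the same trivial modification), exactly as the paper implicitly does.
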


Using these estimates for $|\nabla v_{i}^{\alpha}|$, we have the following key estimates for $a_{11}^{\alpha\beta}$.

\begin{lemma}\label{lemma6.1} For $d\geq2$, we have
\begin{align}
\frac{1}{C}\rho_{d-1,m}^{1}(\varepsilon)\leq a_{11}^{\alpha\alpha}&\leq C\rho_{d-1,m}^1(\varepsilon),\quad \alpha=1, \cdots,\ d;\quad\quad\quad  \label{mlem34-1'}\\
\frac{1}{C}\rho_{d+1,m}^{1}(\varepsilon)\leq a_{11}^{\alpha\alpha}&\leq C\rho_{d+1,m}^{1}(\varepsilon),\quad \alpha=d+1, \cdots, \frac{d(d+1)}{2};\quad\quad\quad  \label{mlem5.8-2'}
\end{align}
\begin{equation}\label{mlem5.8-3'}
|a_{11}^{\alpha \beta}|=|a_{11}^{\beta\alpha}|\leq C\rho_{2(d-1),m}^2(\varepsilon),\quad \alpha,\beta=1, \cdots,\ d,~\alpha\neq\beta.\quad\quad  \quad\quad\quad
\end{equation}
\begin{equation}\label{mlem5.8-4'}
|a_{11}^{\alpha \beta}|=|a_{11}^{\beta\alpha}|\leq C\rho_{2d,m}^2(\varepsilon),\quad \alpha=1, \cdots,\ d,~\beta=d+1, \cdots, \frac{d(d+1)}{2}.
\end{equation}
\begin{equation}\label{mlem5.8-5'}
|a_{11}^{\alpha\beta}|=|a_{11}^{\beta\alpha}|\leq C\rho_{2(d+1),m}^2(\varepsilon),\quad \alpha,\beta=d+1, \cdots, \frac{d(d+1)}{2},~\alpha\neq\beta.\ 
\end{equation}
where
\begin{align*}
\rho_{k,m}^1(\varepsilon)=\begin{cases}1,&m<k,\\
|\log\varepsilon|,&m=k,\\
\varepsilon^{\frac{k-m}{m}},&m>k;
\end{cases}\quad\quad
\rho_{k,m}^2(\varepsilon)=\begin{cases}1,&m<k,\\
|\log\varepsilon|,&m=k,\\
\varepsilon^{\frac{k-m}{2m}},&m>k.
\end{cases}
\end{align*}
\end{lemma}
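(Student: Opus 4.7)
My plan is to mirror the strategy used in Lemmas~\ref{lem_a11} and \ref{glemma4.1}, exploiting the fact that every integral produced here reduces to a radial integral of the shape $\int_0^R r^{k-1}(\varepsilon+r^m)^{-j}\,dr$. The three-branch structure $m<k$, $m=k$, $m>k$ of such integrals is exactly what the $\rho^j_{k,m}$ notation packages, so the statement of the lemma becomes a bookkeeping result once the integrands are correctly identified.

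For the diagonal estimates \eqref{mlem34-1'} and \eqref{mlem5.8-2'}, I will obtain the upper bounds via
$$a_{11}^{\alpha\alpha}=\int_\Omega(\mathbb{C}^0 e(v_1^\alpha),e(v_1^\alpha))\,dx\leq C\int_\Omega |\nabla v_1^\alpha|^2\,dx,$$
substituting \eqref{mv1-bounded1} for translations and \eqref{mv1---bounded1} for rotations, integrating in $x_d$ over a slab of thickness $\delta(x')\sim\varepsilon+|x'|^m$, and reducing to $\int_{B_R'}(\varepsilon+|x'|^m)^{-1}dx'$ and $\int_{B_R'}(\varepsilon+|x'|)^2(\varepsilon+|x'|^m)^{-1}dx'$, respectively. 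Passing to polar coordinates produces $\rho^1_{d-1,m}$ and $\rho^1_{d+1,m}$. For the matching lower bounds, I will use the same trick as in Lemma~\ref{glemma4.1}: $\bar v(x',\cdot)$ and $x_k\bar v(x',\cdot)$ are linear, hence harmonic, in $x_d$, so they minimize $\int_{h_2}^{\varepsilon+h_1}|\partial_{x_d}(\cdot)|^2\,dx_d$ among admissible traces on $\partial D_1\cup\partial D_2$; coercivity of $(\mathbb{C}^0e,e)$ then gives the reverse inequality after the same radial reduction.

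For the three off-diagonal bounds \eqref{mlem5.8-3'}--\eqref{mlem5.8-5'}, I will rewrite
$$a_{11}^{\alpha\beta}=-\int_{\partial D_1}\frac{\partial v_1^\alpha}{\partial\nu_0}\bigg|_+\!\cdot\,\psi_\beta,$$
expand the conormal derivative into its $\lambda$- and $\mu$-pieces as in Steps 2--4 of Lemma~\ref{glemma4.1}, and decompose $v_1^\alpha=\tilde v_1^\alpha+(v_1^\alpha-\tilde v_1^\alpha)$. The explicit $\tilde v_1^\alpha$-terms involving $\partial_{x_d}$ are computed from \eqref{me2.4}, while the remainder is absorbed using the improved bounds \eqref{mnabla_w_i0} or \eqref{mnabla_w_i00}. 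Two structural inputs are used repeatedly: the normal estimate $|n_k|\leq C|x'|^{m-1}$ coming from \eqref{h1h2m}, and the height bound $|x_d|\leq C(\varepsilon+|x'|^m)$ on $\partial D_1\cap B_R$, which always converts a rotation factor $\psi_\beta$ containing $x_d$ into something negligible. After these substitutions, every surviving integrand has the form $|x'|^s/\sqrt{\varepsilon+|x'|^m}$ with $s\in\{d-2,d-1,d\}$, yielding $\rho^2_{2(d-1),m}$, $\rho^2_{2d,m}$, and $\rho^2_{2(d+1),m}$, respectively.

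The main technical obstacle I anticipate is bookkeeping in the regime $m>d+1$, where the normal weight $|x'|^{m-1}$ interacts with the rotation factor $|x'|$ and must be balanced against the singular $(\varepsilon+|x'|^m)^{-1}$ denominator; this is the regime that produces the improved $\varepsilon^{1-(d+1)/m}$ decay of $|C_1^\alpha-C_2^\alpha|$ highlighted in part~(c) of the remark following Theorem~\ref{thm3}, so the constants must be tracked carefully rather than absorbed. By contrast, since here $\Sigma'=\{0'\}$, the parity-based cancellations that required delicate Taylor expansions in Lemmas~\ref{lem_a11} and \ref{glemma4.1} (the analysis of $\mathrm{II}_{23}^{2(2)}$ and $\mathrm{II}_{56}^{3(2)}$) are automatic from odd symmetry on $B_R'$, and no new cancellation lemma is needed.
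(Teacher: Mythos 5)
Your proposal matches the paper's proof essentially step for step: the diagonal bounds come from the energy identity $a_{11}^{\alpha\alpha}=\int_\Omega(\mathbb C^0 e(v_1^\alpha),e(v_1^\alpha))$ together with Corollary~\ref{mcorol3.4} for the upper bound and the Korn/harmonic-minimality trick for the lower bound, while the off-diagonal bounds are obtained exactly by expanding the conormal boundary integral, using $|n_k|\le C|x'|^{m-1}$, $|x_d|\le C(\varepsilon+|x'|^m)$ on $\partial D_1\cap B_R$, and the $\tilde v_1^\alpha$-versus-$(v_1^\alpha-\tilde v_1^\alpha)$ split with \eqref{mnabla_w_i0}--\eqref{mnabla_w_i00}, all reducing to radial integrals captured by $\rho^1_{k,m}$ and $\rho^2_{k,m}$. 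Your remark that the parity-based cancellations are now automatic (since $\delta(x')=\varepsilon+\kappa_0|x'|^m$ is radial and $\Sigma'=\{0'\}$) is also what the paper uses in Step~4.2, so no gap.
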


Using Lemma \ref{lemma6.1}, we have the following improtant proposition, especially the new estimate \eqref{mprop2-2} for $|C_{1}^{\alpha}-C_{2}^{\alpha}|$, $\alpha=d+1,\cdots,\frac{d(d+1)}{2}$.

\begin{prop}\label{mprop2}
Let $C_i^\alpha$ be defined in \eqref{decom_u}, $i=1,2$, $\alpha=1,2,\cdots,\frac{d(d+1)}{2}$. Then $C_{i}^{\alpha}$ are bounded, and if $m\geq d-1$, then, for $\alpha=1,\cdots, d,$
\begin{align}\label{mprop2-1}
|C_{1}^{\alpha}-C_{2}^{\alpha}|\leq\begin{cases} \frac{C}{|\log\varepsilon|}\|\varphi\|_{C^{2}(\partial D)},&     m=d-1,\\
C\varepsilon^{1-\frac{d-1}{m}}\|\varphi\|_{C^{2}(\partial D)},& m>d-1;
\end{cases}\ \ \
\end{align}
and for $\alpha=d+1,\cdots,\frac{d(d+1)}{2}$,
\begin{align}\label{mprop2-2}
|C_{1}^{\alpha}-C_{2}^{\alpha}|\leq\begin{cases} C\|\varphi\|_{C^{2}(\partial D)},& d-1\leq m<d+1,\\
\frac{C}{|\log\varepsilon|}\|\varphi\|_{C^{2}(\partial D)},&     m=d+1,\\
C\varepsilon^{1-\frac{d+1}{m}}\|\varphi\|_{C^{2}(\partial D)},& m>d+1.
\end{cases}
\end{align}
\end{prop}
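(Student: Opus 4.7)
The plan is to reproduce the scheme of Proposition \ref{dprop2} in the new scaling regime: writing the analog of \eqref{dp} as $a_{11}(C_1 - C_2) = p$ and inverting via Cramer's rule, with diagonal dominance supplied by Lemma \ref{lemma6.1}. The $L^\infty$ bound $|C_i^\alpha| \le C\|\varphi\|_{C^2(\partial D)}$ follows from the standard energy argument in the Appendix of \cite{bll1} with minor modifications and will be assumed throughout.

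First, I would establish the analog of Lemma \ref{lemma_pd}, namely $|p| \le C \|\varphi\|_{C^2(\partial D)}$. The proof is identical to that of Lemma \ref{lemma_pd}: one uses the pointwise bounds $|\nabla(v_1^\alpha + v_2^\alpha)| + |\nabla v_3| \le C \|\varphi\|_{C^2(\partial D)}$ from Corollary \ref{mcorol3.4}, integrates by parts over $\Omega$ to express $a_{11}^{\alpha\beta} + a_{21}^{\alpha\beta}$ and $b_1^\beta$ as boundary integrals, and then applies the trace theorem; this uses only that $v_1^\alpha+v_2^\alpha$ and $v_3$ have universally bounded gradients, which is already built into Corollary \ref{mcorol3.4}.

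The core step is Cramer's rule. Fixing $\alpha$, we have $C_1^\alpha - C_2^\alpha = (\det M_\alpha)/\det a_{11}$, where $M_\alpha$ is obtained from $a_{11}$ by replacing its $\alpha$-th column with $p$. The structural input, read off directly from Lemma \ref{lemma6.1}, is the asymptotic diagonal dominance
\begin{equation*}
|a_{11}^{\alpha\beta}|^{2}\;\le\;o_{\varepsilon\to 0}(1)\,\cdot\, a_{11}^{\alpha\alpha}\,a_{11}^{\beta\beta},\qquad \alpha\neq\beta.
\end{equation*}
Since $a_{11}^{\alpha\beta}=\int_{\Omega}(\mathbb{C}^{0}e(v_{1}^{\alpha}),e(v_{1}^{\beta}))\,dx$ is a positive semidefinite bilinear form in $(\alpha,\beta)$, Cauchy--Schwarz already supplies $|a_{11}^{\alpha\beta}|\le(a_{11}^{\alpha\alpha}a_{11}^{\beta\beta})^{1/2}$; the strict $o(1)$ improvement is extracted by comparing the explicit scalings $\rho^{1}_{k,m}$ and $\rho^{2}_{2k,m}$ in Lemma \ref{lemma6.1}. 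From this, the Leibniz expansion gives $\det a_{11}=(1+o(1))\prod_{\gamma}a_{11}^{\gamma\gamma}$, and expanding $\det M_{\alpha}$ along the $\alpha$-th column produces a leading term $p_{\alpha}\prod_{\gamma\neq\alpha}a_{11}^{\gamma\gamma}$, with every other contribution smaller by a factor of $o(1)$. Consequently,
\begin{equation*}
|C_{1}^{\alpha}-C_{2}^{\alpha}|\;\le\;\frac{C\,|p|}{a_{11}^{\alpha\alpha}}\;\le\;\frac{C\,\|\varphi\|_{C^{2}(\partial D)}}{a_{11}^{\alpha\alpha}}.
\end{equation*}

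Inserting the lower bounds on $a_{11}^{\alpha\alpha}$ from \eqref{mlem34-1'} (for $\alpha\in\{1,\ldots,d\}$) and \eqref{mlem5.8-2'} (for $\alpha\in\{d+1,\ldots,\tfrac{d(d+1)}{2}\}$), and tabulating the three regimes $m<k$, $m=k$, $m>k$ with $k=d-1$ and $k=d+1$, yields \eqref{mprop2-1} and \eqref{mprop2-2} directly. The main obstacle is the verification of the strict diagonal dominance above, which reduces to a finite but delicate case analysis on the exponents appearing in $\rho^{1}_{k,m}$ and $\rho^{2}_{k,m}$: for every regime of $m$ relative to the thresholds $d-1$, $d+1$, $2(d-1)$, $2d$, $2(d+1)$, one must check that $(\rho^{2}_{2k,m})^{2}$ is strictly dominated (by at least a factor $\varepsilon$ or $|\log\varepsilon|^{-1}$) by the product of the corresponding diagonal scales. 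The new feature, absent in \cite{bll1,bll2}, is the improved bound \eqref{mprop2-2} for the indices $\alpha\in\{d+1,\ldots,\tfrac{d(d+1)}{2}\}$ when $m>d+1$, giving the $\varepsilon^{\,1-(d+1)/m}$ rate in place of the weaker $\varepsilon^{\,1/m-1}$, and it is precisely this gain that drives the final blow-up rate $\varepsilon^{-d/m}$ in Theorem \ref{thm3}.
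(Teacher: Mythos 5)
Your overall plan — express $C_1-C_2$ by inverting the Gram matrix $a_{11}$ and read off the rates from the diagonal lower bounds — is the same as the paper's, but your key structural claim is false in a regime that matters, and the paper sidesteps it with a block argument. Specifically, you assert the global strict diagonal dominance $|a_{11}^{\alpha\beta}|^{2}\le o(1)\,a_{11}^{\alpha\alpha}a_{11}^{\beta\beta}$ for all $\alpha\ne\beta$, but Lemma \ref{lemma6.1} does not give this when $\alpha,\beta\in\{d+1,\dots,\tfrac{d(d+1)}{2}\}$ and $d-1\le m<d+1$ with $d\ge3$: in that window $a_{11}^{\alpha\alpha}\asymp\rho^{1}_{d+1,m}(\varepsilon)=1$ while the off-diagonals are bounded only by $\rho^{2}_{2(d+1),m}(\varepsilon)=1$, so there is no $o(1)$ gap at all. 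Consequently neither $\det a_{11}=(1+o(1))\prod_{\gamma}a_{11}^{\gamma\gamma}$ nor the Cramer-rule conclusion $|C_{1}^{\alpha}-C_{2}^{\alpha}|\le C|p|/a_{11}^{\alpha\alpha}$ follows from your expansion — and this poisons even the estimate for $\alpha\le d$, since $\det a_{11}$ appears in the denominator of every Cramer quotient.

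The paper avoids this by writing $a_{11}$ in block form with $A=(a_{11}^{\alpha\beta})_{\alpha,\beta\le d}$, $D=(a_{11}^{\alpha\beta})_{\alpha,\beta>d}$ and invoking Lemma \ref{lemma3.3}, whose hypotheses are genuinely weaker than yours: one only needs $\|A^{-1}\|$ small (supplied by the blow-up of $A$'s diagonal together with the strict dominance that does hold \emph{within} $A$) and $\|B\|+\|D^{-1}\|$ bounded — with no smallness on the off-diagonal of $D$. The boundedness of $\|D^{-1}\|$ is itself a non-trivial non-degeneracy fact about the Gram matrix that is not contained in the pointwise entry estimates of Lemma \ref{lemma6.1}; it is imported from the framework of \cite{bll1,bll2}. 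Your proof would become correct if you replaced the global diagonal-dominance claim with this two-block structure (dominance inside $A$ only, plus uniform invertibility of $D$) and applied the Schur-complement estimate of Lemma \ref{lemma3.3}; as written, the case analysis on exponents cannot close because the required $o(1)$ gap simply is not there.
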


\begin{proof}
In view of the symmetry of $a_{11}^{\alpha\beta}$, we write it as a block matrix
\begin{gather*}a_{11}=\begin{pmatrix} A&B \\  B^T&D
\end{pmatrix},
\end{gather*}
where
\begin{gather*}A=\begin{pmatrix} a_{11}^{11}&\cdots&a_{11}^{1d} \\\\ \vdots&\ddots&\vdots\\\\a_{11}^{d1}&\cdots&a_{11}^{dd}\end{pmatrix}  ,\quad
B=\begin{pmatrix} a_{11}^{1~d+1}&\cdots&a_{11}^{1~\frac{d(d+1)}{2}} \\\\ \vdots&\ddots&\vdots\\\\a_{11}^{d~d+1}&\cdots&a_{11}^{d~\frac{d(d+1)}{2}}\end{pmatrix} ,\end{gather*}\begin{gather*}
D=\begin{pmatrix} a_{11}^{d+1~d+1}&\cdots&a_{11}^{d+1~\frac{d(d+1)}{2}} \\\\ \vdots&\ddots&\vdots\\\\a_{11}^{\frac{d(d+1)}{2}~d+1}&\cdots&a_{11}^{\frac{d(d+1)}{2}~\frac{d(d+1)}{2}} \end{pmatrix}.
\end{gather*}
Making use of the estimates obtained in Lemma \ref{lemma6.1}, and thanks to Lemma \ref{lemma3.3} below, we can obtain \eqref{mprop2-1} and \eqref{mprop2-2}.
\end{proof}

\begin{lemma}\label{lemma3.3}[Lemma 6.2 in \cite{bll2}]
For $m\geq1$, let $A$, $D$ be $m\times m$ invertible matrices and $B$ and $C$ be $m\times m$ matrices satisfying, for some $0<\theta<1$ and $\gamma>1$,
\begin{equation*}
\|A^{-1}\|\leq\frac{1}{\theta\gamma},\quad \|B\|+\|C\|+\|D^{-1}\|\leq\frac{1}{\theta}.
\end{equation*}
Then there exists $\bar{\gamma}=\bar{\gamma}(m)>1$ and $C(m)>1$, such that if $\gamma\geq\frac{\bar{\gamma}(m)}{\theta^4}$,
\begin{gather*}\begin{pmatrix} A&B \\  C&D
\end{pmatrix}
\end{gather*}
is invertible. Moreover,
\begin{gather*}
\begin{pmatrix} E_{11}&E_{12}\\  E_{12}^T&E_{22}\end{pmatrix}:=\begin{pmatrix}A&B\\C&D\end{pmatrix}^{-1}-\begin{pmatrix}A^{-1}&0\\0&D^{-1}\end{pmatrix},
\end{gather*}
satisfies
\begin{equation*}
\|E_{11}\|\leq\frac{C(m)}{\theta^5\gamma^2},\quad \|E_{12}\|\leq\frac{C(m)}{\theta^3\gamma},\quad \|E_{22}\|\leq\frac{C(m)}{\theta^5\gamma}.
\end{equation*}
\end{lemma}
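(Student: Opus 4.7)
The plan is to invoke the Schur complement. Since $A$ is invertible by hypothesis, one formally writes
\begin{equation*}
\begin{pmatrix} A & B \\ C & D \end{pmatrix}^{-1} = \begin{pmatrix} S^{-1} & -S^{-1}BD^{-1} \\ -D^{-1}CS^{-1} & D^{-1}+D^{-1}CS^{-1}BD^{-1} \end{pmatrix},
\end{equation*}
where $S := A - BD^{-1}C$, provided both $S$ and $D$ are invertible. So the main task is to establish invertibility of $S$ and to give a sharp bound on $\|S^{-1}\|$; the formulas for $E_{11},E_{12},E_{22}$ then follow by subtracting $\mathrm{diag}(A^{-1},D^{-1})$ and reading off the norms.

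The first step is a Neumann-series argument. Factor $S = A(I - A^{-1}BD^{-1}C)$ and estimate
\begin{equation*}
\|A^{-1}BD^{-1}C\| \le \|A^{-1}\|\,\|B\|\,\|D^{-1}\|\,\|C\| \le \frac{1}{\theta\gamma}\cdot\frac{1}{\theta}\cdot\frac{1}{\theta}\cdot\frac{1}{\theta} = \frac{1}{\theta^{4}\gamma}.
\end{equation*}
Choosing $\bar\gamma(m)$ large enough (say so that $1/(\theta^{4}\gamma)\le 1/2$ whenever $\gamma\ge \bar\gamma(m)/\theta^{4}$) makes $I-A^{-1}BD^{-1}C$ invertible, and the Neumann series gives $\|(I-A^{-1}BD^{-1}C)^{-1}\|\le 2$, hence $\|S^{-1}\|\le 2\|A^{-1}\|\le 2/(\theta\gamma)$. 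In particular the block matrix is invertible.

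For the diagonal correction $E_{11}=S^{-1}-A^{-1}$, I would write
\begin{equation*}
E_{11} = A^{-1}\sum_{k\ge 1}(A^{-1}BD^{-1}C)^{k},
\end{equation*}
so that $\|E_{11}\|\le \|A^{-1}\|\cdot\|A^{-1}BD^{-1}C\|/(1-\|A^{-1}BD^{-1}C\|)\le C/(\theta^{5}\gamma^{2})$; the extra factor $1/\gamma$ is exactly what the statement demands. For the off-diagonal block, $E_{12}=-S^{-1}BD^{-1}$ is bounded directly by $\|S^{-1}\|\,\|B\|\,\|D^{-1}\|\le C/(\theta^{3}\gamma)$. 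For $E_{22}=D^{-1}CS^{-1}BD^{-1}$, one gets $\|E_{22}\|\le \|D^{-1}\|^{2}\,\|B\|\,\|C\|\,\|S^{-1}\|\le C/(\theta^{5}\gamma)$.

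The only subtlety is bookkeeping the powers of $\theta$ and $\gamma$ to verify that the threshold $\bar\gamma(m)/\theta^{4}$ really does the job uniformly in the matrix size $m$ (i.e.\ the constants above absorb the passage from operator norms on $m\times m$ blocks); since every estimate used is purely submultiplicative, $\bar\gamma$ and $C(m)$ can actually be taken independent of $m$ in operator norm, and the dependence on $m$ that is quoted in the lemma only reflects whatever norm equivalence one may wish to use. This is the only step where I would be careful, but it is routine rather than a genuine obstacle.
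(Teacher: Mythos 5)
Your Schur-complement and Neumann-series argument is the natural route and is correct; the present paper does not reprove the lemma (it is quoted verbatim from \cite{bll2}). Concretely: with $M:=A^{-1}BD^{-1}C$ one has $\|M\|\le(\theta^{4}\gamma)^{-1}$, so for $\gamma\ge\bar\gamma/\theta^{4}$ with any $\bar\gamma\ge2$ the Neumann series converges and $\|S^{-1}\|\le 2\|A^{-1}\|\le 2/(\theta\gamma)$; reading off the Schur blocks then gives $\|E_{12}\|,\|E_{21}\|\le 2/(\theta^{3}\gamma)$, $\|E_{22}\|\le 2/(\theta^{5}\gamma)$, and $\|E_{11}\|\le\|S^{-1}-A^{-1}\|\le 2/(\theta^{5}\gamma^{2})$, all as demanded.

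Two minor points. First, your displayed series for $E_{11}$ has the factors in the wrong order: from $S=A(I-M)$ one gets $S^{-1}=(I-M)^{-1}A^{-1}$ and hence $E_{11}=\sum_{k\ge1}M^{k}A^{-1}$, not $A^{-1}\sum_{k\ge1}M^{k}$ (the latter form would come from factoring $A$ on the right, $S=(I-BD^{-1}CA^{-1})A$). This has no effect on the norm bound since the operator norm is submultiplicative either way, but the formula as written is not an identity. Second, the lemma records the inverse's $(2,1)$ block as $E_{12}^{T}$, which presupposes the symmetric case $C=B^{T}$, $A=A^{T}$, $D=D^{T}$; that is exactly the situation in which it is applied to $a_{11}=(a_{11}^{\alpha\beta})$, and your more general proof specializes to it immediately. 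Your closing remark that $\bar\gamma$ and $C(m)$ can actually be taken independent of $m$ in operator norm is correct; the stated $m$-dependence is a harmless overestimate.
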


\begin{proof}[Proof of Theorem \ref{thm3}]

If $m\geq d+1$, then by using  Corollary \ref{mcorol3.4}, and Proposition \ref{mprop2}, we obtain that for $x\in\Omega_{R}$,
\begin{align*}
|\nabla{u}(x)|&\leq\sum_{\alpha=1}^{\frac{d(d+1)}{2}}|C_{1}^\alpha-C_{2}^\alpha||\nabla{v}_{1}^\alpha(x)|+\sum_{\alpha=1}^{\frac{d(d+1)}{2}}|C_{2}^\alpha\nabla({v}_{1}^\alpha+{v}_{2}^\alpha)(x)|
+|\nabla{v}_{3}(x)|\\
&\leq\sum_{\alpha=1}^{d}|C_{1}^\alpha-C_{2}^\alpha||\nabla{v}_{1}^\alpha(x)|+\sum_{\alpha=d+1}^{\frac{d(d+1)}{2}}|C_{1}^\alpha-C_{2}^\alpha||\nabla{v}_{1}^\alpha(x)|+C\|\varphi\|_{C^2(\partial D)}\\
&\leq \begin{cases} C\left(\frac{\varepsilon^{1-\frac{d-1}{m}}}{\varepsilon+|x'|^m}
+\frac{\varepsilon^{1-\frac{d+1}{m}}|x'|}{\varepsilon+|x'|^m}+1\right)\|\varphi\|_{C^2(\partial D)},&m>d+1,\\
 C\left(\frac{\varepsilon^{1-\frac{d-1}{m}}}{\varepsilon+|x'|^m}+\frac{|x'|}{|\log\varepsilon|(\varepsilon+|x'|^m)}+1\right)\|\varphi\|_{C^2(\partial D)},&m=d+1.
 \end{cases}
\end{align*}

If $d-1\leq m<d+1$, then the elements $a_{11}^{\alpha\beta}$, $\alpha,\beta=d+1,\cdots,\frac{d(d+1)}{2}$ are only bounded. So we only can get the estimates for $|C_{1}^\alpha-C_{2}^\alpha|$, $\alpha=1,2,\cdots,d$. As before, see \cite{bll1,bll2},  besides using Corollary \ref{mcorol3.4}, Proposition \ref{mprop2}, we need Lemma \ref{lemma3.3} to obtain the following estimates for $x\in\Omega_{R}$,
\begin{align*}
|\nabla{u}(x)|&\leq\sum_{\alpha=1}^{d}|C_{1}^\alpha-C_{2}^\alpha||\nabla{v}_{1}^\alpha(x)|+\sum_{\alpha=1}^d|C_{2}^\alpha\nabla({v}_{1}^\alpha+{v}_{2}^\alpha)(x)|\\
&\qquad\hspace{1cm}+\sum_{i=1}^{2}\sum_{\alpha=d+1}^{\frac{d(d+1)}{2}}|C_{i}^\alpha\nabla{v}_{i}^\alpha(x)|
+|\nabla{v}_{3}(x)|\\
&\leq\sum_{\alpha=1}^{d}|C_{1}^\alpha-C_{2}^\alpha||\nabla{v}_{1}^\alpha(x)|+C\sum_{i=1}^{2}\sum_{\alpha=d+1}^{\frac{d(d+1)}{2}}|\nabla{v}_{i}^\alpha(x)|+C\|\varphi\|_{C^2(\partial D)}\\
&\leq
\begin{cases}
C\left(\frac{\varepsilon^{1-\frac{d-1}{m}}}{\varepsilon+|x'|^m}+\frac{|x'|}{\varepsilon+|x'|^m}+1\right)\|\varphi\|_{C^2(\partial D)},~~&\mbox{if}~d-1<m<d+1,\\
C\left(\frac{1}{|\log\varepsilon|(\varepsilon+|x'|^m)}+\frac{|x'|}{\varepsilon+|x'|^m}+1\right)\|\varphi\|_{C^2(\partial D)},~~&\mbox{if}~m=d-1.
\end{cases}
\end{align*}
Thus, the proof of Theorem \ref{thm3} is completed.
\end{proof}

\begin{proof}[Proof of Lemma \ref{lemma6.1}] We just list the main differences since the reasoning is the same.

{\bf Step 1.} Estimates of $a_{11}^{\alpha\alpha}$, $\alpha=1,2,\cdots,\frac{d(d+1)}{2}$.

First, for $\alpha=1, \cdots, d$, in view of  (\ref{ellip}) and (\ref{mv1-bounded1}), we have
\begin{align*}
a_{11}^{\alpha\alpha}&=\int_{\Omega}(\mathbb{C}^0e(v_{1}^{\alpha}), e(v_{1}^{\alpha}))dx\leq C\int_{\Omega}|\nabla v_{1}^{\alpha}|^2dx\\
&\leq C\int_0^R\frac{\rho^{d-2}}{\varepsilon+\rho^m}d\rho+C
\leq\begin{cases}C,&m<d-1,\\
C|\log\varepsilon|,&m=d-1,\\
C\varepsilon^{\frac{d-1-m}{m}},&m>d-1.
\end{cases}
\end{align*}
and
\begin{align*}
a_{11}^{\alpha\alpha}&=\int_{\Omega}(\mathbb{C}^0e(v_{1}^{\alpha}), e(v_{1}^{\alpha}))dx\geq \frac{1}{C}\int_{\Omega}|e(v_{1}^{\alpha})|^2dx\\
&\geq \frac{1}{C}\int_{\Omega_R}|\partial_{x_d}(v_{1}^{\alpha})^{\alpha}|^2dx\geq \frac{1}{C}\int_{\Omega_R}|\partial_{x_d}\bar{v}|^2dx\\
&\geq\frac{1}{C}\int_{|x'|<R}\frac{dx'}{\varepsilon+|x'|^m}
\geq\begin{cases}\frac{1}{C},&m<d-1,\\
\frac{1}{C}|\log\varepsilon|,&m=d-1,\\
\frac{1}{C}\varepsilon^{\frac{d-1-m}{m}},&m>d-1.
\end{cases}
\end{align*}
Estimate \eqref{mlem34-1'} is proved.

As for $a_{11}^{\alpha\alpha}$, $\alpha=d+1,\cdots,\frac{d(d+1)}{2}$, we have
\begin{align*}
a_{11}^{\alpha\alpha}&=\int_{\Omega}(\mathbb{C}^0e(v_{1}^{\alpha}), e(v_{1}^{\alpha}))dx\leq C\int_{\Omega}|\nabla v_{1}^{\alpha}|^2dx\\
&\leq\,C\int_{\Omega_R}\frac{|x'|^2}{(\varepsilon+|x'|^m)^2}dx+C\\
&\leq\,C\int_0^R\frac{\rho^{d}}{\varepsilon+\rho^m}d\rho+C
\leq\begin{cases}C,&m<d+1\\
C|\log\varepsilon|,&m=d+1,\\
C\varepsilon^{\frac{d+1-m}{m}},&m>d+1.
\end{cases}
\end{align*}
On the other hand, similar to the proof of $a_{11}^{\alpha\alpha}$ in Lemma 4.2, we have
\begin{align*}
a_{11}^{\alpha\alpha}\geq \frac{1}{C}
\begin{cases}1,&m<d+1\\
|\log\varepsilon|,&m=d+1,\\
\varepsilon^{\frac{d+1-m}{m}},&m>d+1.
\end{cases}
\end{align*}
Thus, \eqref{mlem5.8-2'} holds true.

{\bf Step 2.} Estimates of $a_{11}^{\alpha\beta}$, $\alpha,\beta=1,2,\cdots,d$, $\alpha\neq\beta$.

For $\alpha=1, \cdots, d,~ \beta=1, \cdots, d-1$ with $\alpha\neq\beta$. 
\begin{align*}
a_{11}^{\alpha\beta}&=a_{11}^{\beta\alpha}=-\int_{\partial{D}_{1}}\frac{\partial v_{1}^{\alpha}}{\partial \nu_0}\large\Big|_{+}\cdot\psi_\beta\\
&=-\int_{\partial{D}_{1}}\lambda\left(\sum_{k=1}^d\partial_{x_k} (v_1^{\alpha})^{k}\right)n_\beta
+\mu\sum_{l=1}^d\left(\partial_{x_\beta}(v_1^{\alpha})^{l}+\partial_{x_l}(v_1^{\alpha})^{\beta}\right)n_l.
\end{align*}
Denote
$$\mathrm{I}_{\alpha\beta}:=\int_{\partial{D}_{1}\cap B_R}\left(\sum_{k=1}^d\partial_{x_k} (v_1^{\alpha})^{k}\right)n_\beta,
$$
and
\begin{align*}
\mathrm{II}_{\alpha\beta}:=&\int_{\partial{D}_{1}\cap B_R} \sum_{l=1}^{d}\left(\partial_{x_\beta}(v_1^{\alpha})^{l}+\partial_{x_l}(v_1^{\alpha})^{\beta}\right)n_l\\
=&\int_{\partial{D}_{1}\cap B_R} \sum_{l=1}^{d-1}\left(\partial_{x_\beta}(v_1^{\alpha})^{l}+\partial_{x_l}(v_1^{\alpha})^{\beta}\right)n_l+\int_{\partial{D}_{1}\cap B_R} \partial_{x_\beta}(v_1^{\alpha})^{d}n_d+\int_{\partial{D}_{1}\cap B_R} \partial_{x_d}(v_1^{\alpha})^{\beta} n_d\\
=&:\,\mathrm{II}_{\alpha\beta}^{1}+\mathrm{II}_{\alpha\beta}^{2}+\mathrm{II}_{\alpha\beta}^{3},
\end{align*}
where $$\vec{n}=\frac{(\nabla_{x'}h_1(x'), -1)}{\sqrt{1+|\nabla_{x'}h_1(x')|^2}}.$$
Due to (\ref{h1h2m}), for $k=1,\cdots,d-1$,
\begin{align}\label{mn'} |n_{k}|=\left|\frac{\partial_{x_{k}}h_1(x')}{\sqrt{1+|\nabla_{x'}h_1(x')|^2}}\right|\leq C|x'|^{m-1},\quad~\mbox{and}~ |n_d|=\frac{ 1}{\sqrt{1+|\nabla_{x'}h_1(x')|^2}}\leq1.
\end{align}

For $\alpha=1,\cdots, d, ~\beta=1, \cdots, d-1$, it follows from (\ref{mv1-bounded1}) and (\ref{mn'}) that
\begin{align}\label{a-3'}
|\mathrm{I}_{\alpha\beta}|\leq&\int_{\partial{D}_{1}\cap B_R}\left|\left(\sum_{k=1}^d\partial_{x_k} (v_1^{\alpha})^{k}\right)n_\beta\right|
\leq\int_{\partial{D}_{1}\cap B_R}\frac{C|x'|^{m-1}}{\varepsilon+|x'|^m}dx'
\leq 
\begin{cases}C,&d\geq3,\\
C|\log\varepsilon|,&d=2,
\end{cases}
\end{align}
while
\begin{align*}
|\mathrm{II}_{\alpha\beta}^{1}|\leq\int_{\partial{D}_{1}\cap B_R} \left|\sum_{l=1}^{d-1}\left(\partial_{x_\beta}(v_1^{\alpha})^{l}+\partial_{x_l}(v_1^{\alpha})^{\beta}\right)n_l\right|
\leq\int_{B'_R}\frac{C|x'|^{m-1}}{\sqrt{\varepsilon+|x'|^m}}dx'
\leq\,C,
\end{align*}
and
\begin{align*}
|\mathrm{II}_{\alpha\beta}^{2}|\leq\left|\int_{B'_R} \partial_{x_\beta}(v_1^{\alpha})^{d}n_d\right|\leq\int_{B_R'}\frac{Cdx'}{\sqrt{\varepsilon+|x'|^m}}
\leq\begin{cases}C,&m<2(d-1),\\
C|\log\varepsilon|,&m=2(d-1),\\
C\varepsilon^{\frac{d-1}{m}-\frac{1}{2}},&m>2(d-1).
\end{cases}
\end{align*}
By the definition of $\tilde{v}_1^\alpha$ and (\ref{mnabla_w_i0}),
\begin{align*}
|\mathrm{II}_{\alpha\beta}^{3}|&\leq\left|\int_{\partial{D}_{1}\cap B_R}\partial_{x_d} (v_1^{\alpha})^{\beta} n_d\right|\\
&\leq\left|\int_{\partial{D}_{1}\cap B_R}(\partial_{x_d} (\tilde{v}_1^\alpha)^\beta)n_d\right|+\left|\int_{\partial{D}_{1}\cap B_R}|(\partial_{x_d}( v_{1}^{\alpha}-\tilde{v}_1^\alpha)^\beta)n_d\right|\nonumber\\
&\leq\int_{B'_R}\frac{Cdx'}{\sqrt{\varepsilon+|x'|^m}}
\leq\begin{cases}C,&m<2(d-1),\\
C|\log\varepsilon|,&m=2(d-1),\\
C\varepsilon^{\frac{d-1}{m}-\frac{1}{2}},&m>2(d-1).
\end{cases}
\end{align*}
Here we used the fact that $(\tilde{v}_1^\alpha)^{\beta}=0$ if $\alpha\neq\beta$. Hence,
\begin{align*}
|\mathrm{II}_{\alpha\beta}|
\leq\begin{cases}C,&m<2(d-1),\\
C|\log\varepsilon|,&m=2(d-1),\\
C\varepsilon^{\frac{d-1}{m}-\frac{1}{2}},&m>2(d-1).
\end{cases}
\end{align*}
This, together with (\ref{a-3'}), the boundedness of $|\nabla{v}_1^{\alpha}|$ on $\partial{D}_{1}\setminus B_R$, and the symmetry of $a_{11}^{\alpha\beta}=a_{11}^{\beta\alpha}$, implies that for $ \alpha,\beta=1, \cdots, d$ with $\alpha\neq\beta$,
\begin{align*}
|a_{11}^{\alpha\beta}|=|a_{11}^{\beta\alpha}|\leq |\lambda\|\mathrm{I}_{\alpha\beta}|+|\mu\|\mathrm{II}_{\alpha\beta}|+C\leq\begin{cases}C,&m<2(d-1),\\
C|\log\varepsilon|,&m=2(d-1),\\
C\varepsilon^{\frac{d-1}{m}-\frac{1}{2}},&m>2(d-1).
\end{cases}
\end{align*}

{\bf Step 3.} Estimate $a_{11}^{\alpha\beta}$, $\alpha=1,\cdots, d$, and $\beta=d+1, \cdots, \frac{d(d+1)}{2}$.

For $\alpha=1,\cdots, d$, and $\beta=d+1, \cdots, \frac{d(d+1)}{2}$, we take the case that $\alpha=1$, $\beta=d+1$ for instance. The other cases are the same.  Since $\psi_{d+1}=(-x_{2},x_{1},0£¬\cdots, 0)^{T}$, then using (\ref{mv1-x'}) and the boundedness of $|\nabla{v}_1^{\alpha}|$ on $\partial{D}_{1}\setminus B_R$, we have
\begin{align*}
a_{11}^{1d+1}=&-\int_{\partial{D}_{1}}\frac{\partial v_{1}^{1}}{\partial \nu_0}\large\Big|_{+}\cdot\psi_{d+1}\\
=&\int_{\partial{D}_{1}}\left(\lambda\big(\sum_{k=1}^d\partial_{x_k} (v_1^{1})^{k}\big)n_1
+\mu\sum_{l=1}^d\left(\partial_{x_1}(v_1^{1})^{l}+\partial_{x_l}(v_1^{1})^{1}\right)n_l\right)x_{2}\\
&-\int_{\partial{D}_{1}}\left(\lambda\big(\sum_{k=1}^d\partial_{x_k} (v_1^{1})^{k}\big)n_2
+\mu\sum_{l=1}^d\left(\partial_{x_2}(v_1^{1})^{l}+\partial_{x_l}(v_1^{1})^{2}\right)n_l\right)x_{1}.
\end{align*}
Denote
\begin{align*}
\mathrm{I}_{1d+1}:=&\lambda\int_{\partial{D}_{1}\cap B_R}\big(\sum_{k=1}^d\partial_{x_k} (v_1^{1})^{k}\big)n_1x_2+\mu\int_{\partial{D}_{1}\cap B_R}\sum_{l=1}^{d-1}\left(\partial_{x_1}(v_1^{1})^{l}+\partial_{x_l}(v_1^{1})^{1}\right)n_lx_{2}\\
&+\mu\int_{\partial{D}_{1}\cap B_R}\partial_{x_1}(v_1^{1})^{d}n_dx_2+\mu\int_{\partial{D}_{1}\cap B_R}\partial_{x_d}(v_1^{1})^{1}n_dx_2\\
=&:\,\lambda\mathrm{I}_{1d+1}^{1}+\mu\mathrm{I}_{1d+1}^{2}+\mu\mathrm{I}_{1d+1}^{3}+\mu\mathrm{I}_{1d+1}^{4},
\end{align*}
and
\begin{align*}
\mathrm{II}_{1d+1}:=&\lambda\int_{\partial{D}_{1}\cap B_R}\big(\sum_{k=1}^{d}\partial_{x_k} (v_1^{1})^{k}\big)n_2x_1+\mu\int_{\partial{D}_{1}\cap B_R}\sum_{l=1}^{d-1}\left(\partial_{x_2}(v_1^{1})^{l}+\partial_{x_l}(v_1^{1})^{2}\right)n_lx_{1}\\
&+\mu\int_{\partial{D}_{1}\cap B_R}\partial_{x_2}(v_1^{1})^{d}n_dx_1+\mu\int_{\partial{D}_{1}\cap B_R}\partial_{x_d}(v_1^{1})^{2}n_dx_1\\
=&:\,\lambda\mathrm{II}_{1d+1}^{1}+\mu\mathrm{II}_{1d+1}^{2}+\mu\mathrm{II}_{1d+1}^{3}+\mu\mathrm{II}_{1d+1}^{4}.
\end{align*}
From \eqref{mv1-bounded1} and \eqref{mn'}, we have
\begin{align*}
|\mathrm{I}_{1d+1}^1|=\left|\int_{\partial{D}_{1}\cap B_R}\big(\sum_{k=1}^d\partial_{x_k} (v_1^{1})^{k}\big)n_1x_2\right|\leq\int_{ B_R'}\frac{C|x'|^{m}dx'}{\varepsilon+|x'|^m}\leq C.
\end{align*}
$|\mathrm{I}_{1d+1}^2|$ is similar. As for $|\mathrm{I}_{1d+1}^3|$, from \eqref{mnabla_w_i0}, we can obtain
\begin{align*}
|\mathrm{I}_{1d+1}^{3}|&\leq \left|\int_{\partial{D}_{1}\cap B_R}\partial_{x_1} (\tilde{v}_1^1)^dn_dx_2\right|+\left|\int_{\partial{D}_{1}\cap B_R}\partial_{x_1}( v_{1}^{1}-\tilde{v}_1^1)^dn_dx_2\right|\nonumber\\
&\leq C\int_{B_R'}\frac{|x'|dx'}{\sqrt{\varepsilon+|x'|^m}}\leq C\int_0^R\frac{\rho^{d-1}}{\sqrt{\varepsilon+\rho^m}}d\rho\\
&\leq \begin{cases}C,&m<2d,\\
C|\log\varepsilon|,&m=2d,\\
C\varepsilon^{\frac{2d-m}{2m}},&m>2d.
\end{cases}
\end{align*}
Similarly, we can estimate $\mathrm{I}_{1d+1}^4$,
\begin{align*}
|\mathrm{I}_{1d+1}^{4}|&\leq \left|\int_{\partial{D}_{1}\cap B_R}\partial_{x_d} (\tilde{v}_1^1)^1n_dx_2\right|+\left|\int_{\partial{D}_{1}\cap B_R}\partial_{x_d}( v_{1}^{1}-\tilde{v}_1^1)^dn_dx_2\right|\nonumber\\
&\leq\,C\left|\int_{B_R'}\frac{x_{2}dx'}{\varepsilon+\kappa_{0}|x'|^m}\right|+\int_{B_R'}\frac{C|x'|dx'}{\sqrt{\varepsilon+|x'|^m}}\\
&\leq C\int_0^R\frac{\rho^{d-1}}{\sqrt{\varepsilon+\rho^m}}d\rho\leq\begin{cases}C,&m<2d,\\
C|\log\varepsilon|,&m=2d,\\
C\varepsilon^{\frac{2d-m}{2m}},&m>2d.
\end{cases}
\end{align*}
Therefore,
\begin{align*}
|\mathrm{I}_{1d+1}|\leq \begin{cases}C,&m<2d,\\
C|\log\varepsilon|,&m=2d,\\
C\varepsilon^{\frac{2d-m}{2m}},&m>2d.
\end{cases}
\end{align*}
The estimate of $\mathrm{II}_{1d+1}$  is similar to that of $\mathrm{I}_{1d+1}$.
Combining these estimates, we have
\begin{align*}
|a_{11}^{1~d+1}|\leq \begin{cases}C,&m<2d,\\
C|\log\varepsilon|,&m=2d,\\
C\varepsilon^{\frac{2d-m}{2m}},&m>2d.
\end{cases}
\end{align*}

{\bf Step 4.} Estimates of $a_{11}^{\alpha\beta}$, $\alpha, \beta=d+1,\cdots, \frac{d(d+1)}{2}$ with $\alpha\neq\beta$.

For $\alpha, \beta=d+1,\cdots, \frac{d(d+1)}{2}$ with $\alpha\neq\beta$, we  only   estimate $a_{11}^{d+1~d+2}$ with
$\tilde{v}_1^{d+1}=(-\bar{v}x_2, \bar{v}x_1, 0,\cdots,0)^T$ and $\psi_{d+2}=(-x_{3},0,x_{1}, 0, \cdots, 0)^{T}$ for instance. The other cases are the same.
\begin{align*}
&a_{11}^{d+1~d+2}\\
=&-\int_{\partial{D}_{1}}\frac{\partial v_1^{d+1}}{\partial \nu_0}\large\Big|_{+}\cdot\psi_{d+2}\\
=&\int_{\partial{D}_{1}\cap B_R}\left(\lambda\big(\sum_{k=1}^d\partial_{x_k} (v_1^{d+1})^k\big)n_1
+\mu\sum_{l=1}^d\left(\partial_{x_1}(v_1^{d+1})^l+\partial_{x_l}(v_1^{d+1})^1\right)n_l\right)x_{3}\\
&-\int_{\partial{D}_{1}\cap B_R}\left(\lambda\big(\sum_{k=1}^d\partial_{x_k} (v_1^{d+1})^k\big)n_3
+\mu\sum_{l=1}^d\left(\partial_{x_3}(v_1^{d+1})^l+\partial_{x_l}(v_1^{d+1})^3\right)n_l\right)x_{1}+O(1)\\
=:&~\mathrm{I}_{d+1~d+2}-\mathrm{II}_{d+1~d+2}+O(1).
\end{align*}

{\bf Step 4.1.}
For $d=3$, in view of that $|x_3|\leq C(\varepsilon+|x'|^m)$ on $\partial{D}_{1}$,  we have
\begin{align*}
|\mathrm{I}_{45}|\leq\int_{B_R'}\frac{C|x'\|x_3|}{\varepsilon+|x'|^m}dx'\leq C.
\end{align*}
Denote
\begin{align*}
\mathrm{II}_{45}:=&\lambda\int_{\partial{D}_{1}\cap B_R}\sum_{k=1}^{2}\partial_{x_k} (v_1^4)^kn_3x_1+(\lambda+2\mu)\int_{\partial{D}_{1}\cap B_R}\partial_{x_3}(v_1^4)^3n_3x_{1}\\
&+\mu\int_{\partial{D}_{1}\cap B_R}\sum_{l=1}^{2}(\partial_{x_3}(v_1^4)^l+\partial_{x_l}(v_1^4)^3)n_lx_1+O(1)\\
=&:\,\lambda\mathrm{II}_{45}^{1}+(\lambda+2\mu)\mathrm{II}_{45}^{2}+\mu\mathrm{II}_{45}^{3}+O(1),
\end{align*}
From \eqref{me2.4} and \eqref{mnabla_w_i00}, we have
\begin{align*}
|\mathrm{II}_{45}^{1}|&\leq\left|\int_{\partial{D}_{1}\cap B_R}\sum_{k=1}^{2}\partial_{x_k} (v_1^4)^kn_3x_1\right|\\
&\leq\left|\int_{\partial{D}_{1}\cap B_R}\sum_{k=1}^{2}\partial_{x_k} (\tilde{v}_1^4)^kn_3x_1\right|+\left|\int_{\partial{D}_{1}\cap B_R}\sum_{k=1}^{2}\partial_{x_k} (v_1^4-\tilde{v}_1^4)^kn_3x_1\right|\\
&\leq\int_{B_R'}\frac{C|x'|^{m+1}}{\varepsilon+|x'|^m}dx'+\int_{B_R'}\frac{C|x'|^2}{\sqrt{\varepsilon+|x'|^m}}dx'\\
&\leq\int_0^R\frac{\rho^3}{\sqrt{\varepsilon+\rho^m}}d\rho\leq C\begin{cases}C,&m<8,\\
C|\log\varepsilon|,&m=8,\\
C\varepsilon^{\frac{8-m}{2m}},&m>8.
\end{cases}
\end{align*}
Since $ (\tilde{v}_1^4)^3=0$, we have
\begin{align*}
|\mathrm{II}_{45}^{2}|&\leq\left|\int_{\partial{D}_{1}\cap B_R}\partial_{x_3} (\tilde{v}_1^4)^3n_3x_1\right|+\left|\int_{\partial{D}_{1}\cap B_R}\partial_{x_3} (v_1^4-\tilde{v}_1^4)^3n_3x_1\right|\\
&\leq\int_{B_R'}\frac{C|x'|^2}{\sqrt{\varepsilon+|x'|^m}}dx'\leq\int_0^R\frac{\rho^3}{\sqrt{\varepsilon+\rho^m}}d\rho\\
&\leq \begin{cases}C,&m<8,\\
C|\log\varepsilon|,&m=8,\\
C\varepsilon^{\frac{8-m}{2m}},&m>8.
\end{cases}
\end{align*}
In view of \eqref{mn'}, we have
\begin{align*}
|\mathrm{II}_{45}^{3}|&\leq\left|\int_{\partial{D}_{1}\cap B_R}\sum_{l=1}^{2}(\partial_{x_3}(v_1^4)^l+\partial_{x_l}(v_1^4)^3)n_lx_1\right|\leq\int_{ B_R'}\frac{C|x'|^{m+1}}{\varepsilon+|x'|^m}dx'\leq C.
\end{align*}
Combining these estimates yields
\begin{align*}
|a_{11}^{45}|\leq \begin{cases}C,&m<8,\\
C|\log\varepsilon|,&m=8,\\
C\varepsilon^{\frac{8-m}{2m}},&m>8.
\end{cases}
\end{align*}

{\bf Step 4.2.}
For $d>3$, denote
\begin{align*}
\mathrm{I}_{d+1~d+2}=&\int_{\partial{D}_{1}\cap B_R}\left[\lambda\sum_{k=1}^{d}\partial_{x_k} (v_1^{d+1})^kn_1
x_{3}+\mu\sum_{l=1}^{d-1}\left(\partial_{x_1}(v_1^{d+1})^l+\partial_{x_l}(v_1^{d+1})^1\right)n_lx_{3}\right]\\
&+\int_{\partial{D}_{1}\cap B_R}\mu\partial_{x_1}(v_1^{d+1})^dn_dx_{3}+\int_{\partial{D}_{1}\cap B_R}\mu\partial_{x_d}(v_1^{d+1})^1n_dx_{3}\\
:=&\,\mathrm{I}_{d+1~d+2}^1+\mathrm{I}_{d+1~d+2}^2+\mathrm{I}_{d+1~d+2}^3.
\end{align*}
Since $|n_l|\leq C|x'|^{m-1}$ for $l=1,\cdots,d-1$, we have
\begin{align*}
|\mathrm{I}_{d+1~d+2}^1|\leq\int_{B_R'}\frac{C|x'|^{m+1}}{\varepsilon+|x'|^m}dx'\leq C.
\end{align*}
In view of $(\tilde{v}_1^{d+1})^d=0$, we have
\begin{align*}
|\mathrm{I}_{d+1~d+2}^2|&\leq\left|\int_{\partial{D}_{1}\cap B_R}\partial_{x_2}(v_1^{d+1}-\tilde{v}_1^{d+1})^dn_dx_{3}\right|\\
&\leq\int_{B_R'}\frac{C|x'|^2}{\sqrt{\varepsilon+|x'|^m}}dx'\\
&=\int_0^R\frac{C\rho^d}{\sqrt{\varepsilon+\rho^m}}d\rho\leq \begin{cases}C,&m<2(d+1),\\
C|\log\varepsilon|,&m=2(d+1),\\
C\varepsilon^{\frac{2(d+1)-m}{2m}},&m>2(d+1).
\end{cases}
\end{align*}
Using the symmetry,
\begin{align*}
\left|\int_{\partial{D}_{1}\cap B_R}\partial_{x_d}(\tilde{v}_1^{d+1})^1n_dx_{3}\right|=\left|\int_{B_R'}\frac{x_2x_3}{\varepsilon+|x'|^m}dx'\right|=0,
\end{align*}
then
\begin{align*}
|\mathrm{I}_{d+1~d+2}^3|&\leq\left|\int_{\partial{D}_{1}\cap B_R}\partial_{x_d}(v_1^{d+1}-\tilde{v}_1^{d+1})^1n_dx_{3}\right|\\
&\leq\int_{B_R'}\frac{|x'|^2}{\sqrt{\varepsilon+|x'|^m}}dx'\leq \begin{cases}C,&m<2(d+1),\\
C|\log\varepsilon|,&m=2(d+1),\\
C\varepsilon^{\frac{2(d+1)-m}{2m}},&m>2(d+1).
\end{cases}
\end{align*}
Therefore,
\begin{align*}
|\mathrm{I}_{d+1~d+2}|\leq \begin{cases}C,&m<2(d+1),\\
C|\log\varepsilon|,&m=2(d+1),\\
C\varepsilon^{\frac{2(d+1)-m}{2m}},&m>2(d+1).
\end{cases}
\end{align*}
Similarly, we have
\begin{align*}
|\mathrm{II}_{d+1~d+2}|\leq \begin{cases}C,&m<2(d+1),\\
C|\log\varepsilon|,&m=2(d+1),\\
C\varepsilon^{\frac{2(d+1)-m}{2m}},&m>2(d+1).
\end{cases}
\end{align*}
As a conclusion, for $d\geq3$,
\begin{align*}
|a_{11}^{d+1~d+2}|\leq \begin{cases}C,&m<2(d+1),\\
C|\log\varepsilon|,&m=2(d+1),\\
C\varepsilon^{\frac{2(d+1)-m}{2m}},&m>2(d+1).
\end{cases}
\end{align*}
We finish the proof of Lemma \ref{lemma6.1}.
\end{proof}

\section{Appendix: The proof of Theorem \ref{thm2.1} and \ref{thm6.1}}

For the completeness, we give a sketch of the proof of Theorem \ref{thm2.1} in Subsection \ref{subsec_thm2.1} and that of Theorem \ref{thm6.1} in Subsection \ref{subsec_thm6.1}, although the idea, especially the iteration technique, is mainly from \cite{bll1,bll2}.

\subsection{Proof of theorem \ref{thm2.1}}\label{subsec_thm2.1}

Recalling that
\begin{equation*}
\delta(x'):=\varepsilon+h_{1}(x')-h_{2}(x'),\qquad\forall~(x',x_{d})\in\Omega_{R},
\end{equation*}
and
$$d(x'):=d_{\Sigma'}(x')=dist(x',\Sigma').$$
By (\ref{h1-h21}), we have
\begin{align}
\frac{1}{C}(\varepsilon+d^2(z'))\leq\delta(z')\leq C(\varepsilon+d^2(z')).\nonumber
\end{align}
By a direct calculation, we obtain that for $k, j=1,\cdots, d-1$, and $x\in\Omega_{R}$,
\begin{align}
|\partial_{x_{k}x_{j}}\bar{v}(x)|\leq\frac{C}{\varepsilon+d^2(x')}, \quad|\partial_{x_{k}x_{d}}\bar{v}(x)|\leq\frac{Cd(x')}{(\varepsilon+d^2(x'))^2},\quad \partial_{x_{d}x_{d}}\bar{v}(x)=0.\label{ee2.4}
\end{align}
Due to (\ref{e2.4}), and (\ref{ee2.4}), for $l=1,2,\cdots,d$, and $k,j=1,2,\cdots,d-1$, for $x\in\Omega_{R}$,
\begin{align}
&|\partial_{x_{k}x_{j}}\tilde{v}_{l}(x)|\nonumber\\
&\leq\frac{C|\psi^{l}(x',\varepsilon+h_{1}(x'))|}{\varepsilon+d^2(x')}
+C\left(\frac{d(x')}{\varepsilon+d^2(x')}+1\right)\|\nabla\psi^{l}\|_{L^{\infty}}+C\|\nabla^{2}\psi^{l}\|_{L^{\infty}},\label{eq1.8}\\
&|\partial_{x_{k}x_{d}}\tilde{v}_{l}(x)|
\leq\frac{Cd(x')}{(\varepsilon+d^2(x'))^2}|\psi^{l}(x',\varepsilon+h_{1}(x'))|+\frac{C}{\varepsilon+d^2(x')}\|\nabla\psi^{l}\|_{L^{\infty}},\label{eq1.9}\\
&\partial_{x_{d}x_{d}}\tilde{v}_{l}(x)=0.\label{eq1.10}
\end{align}
Here and throughout this section, for simplicity we use $\|\nabla\psi\|_{L^{\infty}}$ and $\|\nabla^{2}\psi\|_{L^{\infty}}$ to denote $\|\nabla\psi\|_{L^{\infty}(\partial{D}_{1})}$ and $\|\nabla^{2}\psi\|_{L^{\infty}(\partial{D_1})}$, respectively.

Set
$$\widehat{\Omega}_{s}(z'):=\left\{~x\in\Omega_{2R} ~\big|~ |x'-z'|<s~\right\}, \quad\forall~ 0\leq{s}\leq R.$$
It follows from \eqref{equ_tildeu}  and (\ref{eq1.8})--(\ref{eq1.10}) that for $x\in\Omega_{R}$, $l=1,2,\cdots,d$,
\begin{align}\label{f}
|\mathcal{L}_{\lambda,\mu}\tilde{v}_{l}|\leq\, C|\nabla^2\tilde{v}_{l}|
\leq&\left(\frac{C}{\varepsilon+d^2(x')}+\frac{Cd(x')}{(\varepsilon+d^2(x'))^2}\right)|\psi^{l}(x',\varepsilon+h_{1}(x'))|\nonumber\\
&+\frac{C}{\varepsilon+d^2(x')}\|\nabla\psi^{l}\|_{L^{\infty}}+C\|\nabla^{2}\psi^{l}\|_{L^{\infty}},
\end{align}
where $C$ is independent of $\varepsilon$.

Let
\begin{equation}\label{def_w}
w_l:=v_l-\tilde{v}_l,\qquad l=1,2,\cdots,d.
\end{equation}

\begin{proof}[Proof of Theorem \ref{thm2.1}]
\noindent{\bf Step 1.}
Let $v_l\in H^1(\Omega; \mathbb{R}^{d})$ be a weak solution of (\ref{eq_v2.1}).  We first prove that
\begin{align}\label{lem2.2equ}
\int_{\Omega}|\nabla w_l|^2dx\leq C\|\psi^{l}\|^{2}_{C^{2}(\partial{D}_{1})},\qquad\,l=1,2,\cdots,d.
\end{align}

For simplicity, we denote
$$w:=w_{l},\quad\mbox{and}\quad \tilde{v}:=\tilde{v}_{l}.$$
Thus, $w$ satisfies
\begin{align}\label{eq2.6}
\begin{cases}
  \mathcal{L}_{\lambda,\mu}w=-\mathcal{L}_{\lambda,\mu}\tilde{v},&
\hbox{in}\  \Omega,  \\
w=0, \quad&\hbox{on} \ \partial\Omega.
\end{cases}
\end{align}
Multiplying the equation in (\ref{eq2.6}) by $w$ and applying integration by parts, we get
\begin{align}\label{integrationbypart}
\int_{\Omega}\left(\mathbb{C}^0e(w),e(w)\right)dx=\int_{\Omega}w\left(\mathcal{L}_{\lambda,\mu}\tilde{v}\right)dx.
\end{align}

Since $w=0$ on $\partial\Omega$, by the Poincar\'{e} inequality,
\begin{align}\label{poincare_inequality}
\|w\|_{L^2(\Omega\setminus\Omega_R)}\leq\,C\|\nabla w\|_{L^2(\Omega\setminus\Omega_R)}.
\end{align}
Note that the above constant $C$ is independent of $\varepsilon$.
Using the Sobolev trace embedding theorem,
\begin{align}\label{trace}
\int\limits_{\scriptstyle |x'|={R},\atop\scriptstyle
h_2(x')<x_{d}<{\varepsilon}+h_1(x')\hfill}|w|ds\leq\ C \left(\int_{\Omega\setminus\Omega_{R}}|\nabla w|^2dx\right)^{\frac{1}{2}}.
\end{align}
According to (\ref{eq1.7}), we have
\begin{align}\label{nablax'tildeu}
&\int_{\Omega_{R}}|\nabla_{x'}\tilde{v}|^2dx\nonumber\\
&\leq C\int_{|x'|<R}(\varepsilon+h_1(x')-h_2(x'))\left(\frac{d^2(x')|\psi^{l}(x',\varepsilon+h_{1}(x'))|^{2}}{(\varepsilon+d^2(x'))^2}+\|\nabla\psi^{l}\|_{L^{\infty}}^{2}\right)dx'\nonumber\\
&\leq C\|\psi^{l}\|_{C^{1}(\partial{D}_{1})}^{2},
\end{align}
where $C$ depends only on $d$ and $\kappa_0$.

The first Korn's inequality together with \eqref{2.15}, \eqref{eu}, \eqref{nabla_vtilde_outside} and \eqref{poincare_inequality} implies
\begin{align}
\int_{\Omega}|\nabla w|^2dx\leq &\,2\int_{\Omega}|e(w)|^2dx\nonumber\\
\leq&\,C\left|\int_{\Omega_R}w(\mathcal{L}_{\lambda,\mu}\tilde{v})dx\right|+C\left|\int_{\Omega\setminus\Omega_R}w(\mathcal{L}_{\lambda,\mu}\tilde{v})dx\right|\nonumber\\
\leq&\,C\left|\int_{\Omega_R}w(\mathcal{L}_{\lambda,\mu}\tilde{v})dx\right|+C\|\psi^{l}\|_{C^2(\partial{D}_{1})}\int_{\Omega\setminus\Omega_R}|w|dx\nonumber\\
\leq&\,C\left|\int_{\Omega_R}w(\mathcal{L}_{\lambda,\mu}\tilde{v})dx\right|+C\|\psi^{l}\|_{C^2(\partial{D}_{1})}\left(\int_{\Omega\setminus\Omega_R}|\nabla w|^2\right)^{1/2},\nonumber
\end{align}
while, due to \eqref{eq1.10}, (\ref{trace}) and (\ref{nablax'tildeu}),
\begin{align}
&\left|\int_{\Omega_R}w(\mathcal{L}_{\lambda,\mu}\tilde{v})dx\right|
\leq C\sum_{k+l<2d}\left|\int_{\Omega_{R}}w\partial_{x_kx_l}\tilde{v}dx\right|\nonumber\\
\leq&\, C\int_{\Omega_{R}}|\nabla w\|\nabla_{x'}\tilde{v}|dx+\int\limits_{\scriptstyle |x'|={R},\atop\scriptstyle
h_2(x')<x_{d}<\varepsilon+h_1(x')\hfill}C|\nabla_{x'} \tilde{v}\|w|dx\nonumber \\
\leq&\,  C\left(\int_{\Omega_{R}}|\nabla w|^2dx\right)^{\frac{1}{2}}\left(\int_{\Omega_{R}}|\nabla_{x'}\tilde{v}|^2dx\right)^{\frac{1}{2}}+C\|\psi^{l}\|_{C^1(\partial{D}_{1})}\left(\int_{\Omega\setminus\Omega_R}|\nabla w|^2dx\right)^{\frac{1}{2}}\nonumber\\
\leq&\,  C\|\psi^{l}\|_{C^{1}(\partial{D}_{1})}\left(\int_{\Omega}|\nabla w|^2dx\right)^{\frac{1}{2}}.\nonumber
\end{align}
Therefore,
\begin{align*}
\int_{\Omega}|\nabla w|^2dx\leq  C\|\psi^{l}\|_{C^{2}(\partial{D}_{1})}\left(\int_{\Omega}|\nabla w|^2dx\right)^{\frac{1}{2}}.
\end{align*}

\noindent{\bf Step 2.} Proof of
\begin{align}\label{step2}
 \int_{\widehat{\Omega}_\delta(z')}|\nabla w|^2dx
 &\leq C\delta^{d-1}\left(|\psi^{l}(z',\varepsilon+h_{1}(z'))|^2+\delta(\|\psi^{l}\|_{C^2(\partial D_1)}^2+1)\right),
\end{align}
where $\delta=\delta(z')=\varepsilon+h_{1}(z')-h_{2}(z')$, and
\begin{equation*}
\widehat{\Omega}_{t}(z'):=\left\{x\in \mathbb{R}^{d}~\big|~h_{2}(x')<x_{d}<\varepsilon+h_{1}(x'),~|x'-z'|<{t}\right\}.
\end{equation*}

The following iteration scheme we used is similar in spirit to that used in  \cite{bll1,llby}. For $0<t<s<R$, let $\eta$ be a smooth cutoff function satisfying $\eta(x')=1$ if $|x'-z'|<t$, $\eta(x')=0$ if $|x'-z'|>s$, $0\leq\eta(x')\leq1$ if $t\leq|x'-z'|\leq\,s$, and $|\nabla_{x'}\eta(x')|\leq\frac{2}{s-t}$. Multiplying the equation in \eqref{eq2.6} by $w\eta^{2}$ and integrating by parts
leads  to
\begin{align}\label{integrationbypart}
\int_{\widehat{\Omega}_{s}(z')}\left(\mathbb{C}^0e(w),e(w\eta^{2})\right)dx=\int_{\widehat{\Omega}_{s}(z')}\left(w\eta^2\right)\mathcal{L}_{\lambda,\mu}\tilde{v}dx.
\end{align}
By the first Korn's inequality and the standard arguments, we have
\begin{align}\label{lemma2.2-2}
\int_{\widehat{\Omega}_{s}(z')}(\mathbb{C}^0e(w), e(\eta^2w))dx\geq\frac{1}{C}\int_{\widehat{\Omega}_{s}(z')}|\eta\nabla w|^2dx-C\int_{\widehat{\Omega}_{s}(z')}|\nabla\eta|^2|w|^2dx.
\end{align}
For the right hand side of (\ref{integrationbypart}), in view of H\"older inequality and Cauchy inequality,
\begin{align}
\left|\int_{\widehat{\Omega}_{s}(z')}(\eta^2w)\mathcal{L}_{\lambda,\mu}\tilde{v}dx\right|
&\leq\left(\int_{\widehat{\Omega}_{s}(z')}|w|^2dx\right)^{\frac{1}{2}}\left(\int_{\widehat{\Omega}_{s}(z')}|\mathcal{L}_{\lambda,\mu}\tilde{v}|^2dx\right)^{\frac{1}{2}}\nonumber\\
&\leq\frac{C}{(s-t)^2}\int_{\widehat{\Omega}_{s}(z')}|w|^2dx+C(s-t)^2\int_{\widehat{\Omega}_{s}(z')}|\mathcal{L}_{\lambda,\mu}\tilde{v}|^2dx.\nonumber
\end{align}
This, together with (\ref{integrationbypart}) and  (\ref{lemma2.2-2}), implies that
\begin{align}\label{ww}
\int_{\widehat{\Omega}_t(z)}|\nabla w|^2dx\leq\frac{C}{(s-t)^2}\int_{\widehat{\Omega}_{s}(z')}|w|^2dx +C(s-t)^2\int_{\widehat{\Omega}_{s}(z')}|\mathcal{L}_{\lambda,\mu}\tilde{v}|^2dx.
\end{align}

We know that $w=0$ on $\Gamma_{R}^-$, where $\Gamma_{R}^{-}:=\{x\in\mathbb{R}|~x_{d}=h_{2}(x'),|x'|<R\}$. By using (\ref{h1h2})--\eqref{h1h3} and H\"{o}lder inequality, we obtain\begin{align}\label{w}
\int_{\widehat{\Omega}_{s}(z')}|w|^2dx&=\int_{\widehat{\Omega}_{s}(z')}\left|\int_{h_2(x')}^{x_{d}}\partial_{x_{d}}w(x', \xi)d\xi\right|^2dx\nonumber\\
&\leq \int_{\widehat{\Omega}_{s}(z')}(\varepsilon+h_1(x')-h_2(x'))\int_{h_2(x')}^{\varepsilon+h_1(x')}|\nabla w(x',\xi)|^2d\xi\,dx\nonumber\\
&\leq\,C\delta^{2}(z')\int_{\widehat{\Omega}_{s}(z')}|\nabla w|^2dx.
\end{align}
It follows from (\ref{f}) and the mean value theorem that
\begin{align}
&\int_{\widehat{\Omega}_{s}(z')}|\mathcal{L}_{\lambda,\mu}\tilde{v}|^2dx\nonumber\\
\leq&\, |\psi^{l}(z',\varepsilon+h_{1}(z'))|^2\int_{\widehat{\Omega}_{s}(z')}\left(\frac{C}{\varepsilon+d^2(x')}+\frac{Cd(x')}{(\varepsilon+d^2(x'))^2}\right)^2dx\nonumber\\
&+\|\nabla\psi^{l}\|_{L^\infty}^2\int_{\widehat{\Omega}_{s}(z')}\left(\frac{C}{\varepsilon+d^2(x')}+\frac{Cd(x')}{(\varepsilon+d^2(x'))^2}\right)^2|x'-z'|^2dx\nonumber\\
&+\|\nabla\psi^{l}\|_{L^\infty}^2\int_{\widehat{\Omega}_{s}(z')}\left(\frac{C}{\varepsilon+d^2(x')}\right)^2dx+C\delta(z')s^{d-1}\|\nabla^2\psi^{l}\|_{L^\infty}^2\nonumber\\
\leq&\, C|\psi^{l}(z',\varepsilon+h_{1}(z'))|^2 \int_{|x'-z'|<s}\frac{dx'}{(\varepsilon+d^2(x'))^2}\nonumber\\
&+C\|\nabla\psi^{l}\|_{L^\infty}^2 \int_{|x'-z'|<s}\left(\frac{1}{\varepsilon+d^2(x')}+\frac{s^{2}}{(\varepsilon+d^2(x'))^2}\right)dx'+C\delta(z')s^{d-1}\|\nabla^2\psi^{l}\|_{L^\infty}^2.\label{ff}
\end{align}

We further divide into three cases to derive the iteration formula by using \eqref{ww}.

{\bf Case 1.} For $z'\in\Sigma'_{-\sqrt{\varepsilon}}:=\{x'\in\Sigma'~|~ d(x',\partial\Sigma')>\sqrt{\varepsilon}\}$ and $0<s<\sqrt{\varepsilon}$, where $\delta(z')=\varepsilon$.

We here assume that $B'_{\sqrt{\varepsilon}}\subset\Sigma'$ (otherwise, start from Case 2), then
\begin{align}\label{int_w1}
\int_{\widehat{\Omega}_{s}(z')}|w|^{2}
&\leq C\varepsilon^{2}\int_{\widehat{\Omega}_{s}(z')}|\nabla{w}|^{2}.
\end{align}
Denote
$$F(t):=\int_{\widehat{\Omega}_{t}(z')}|\nabla{w}|^{2}.$$
It follows from \eqref{ww} and \eqref{int_w1} that
\begin{equation}\label{energy_w1}
F(t)\leq\left(\frac{c_{1}\varepsilon}{s-t}\right)^{2}F(s),
\end{equation}
where $c_{1}$ is a {\it universal constant} but independent of $|\Sigma'|$.

Let $k=\left[\frac{1}{4c_{1}\sqrt{\varepsilon}}\right]$ and $t_{i}=\delta+2c_{1}i\varepsilon, i=1,2,\cdots,k$. Then by \eqref{energy_w1} with $s=t_{i+1}$ and $t=t_{i}$, we have
$$F(t_{i})\leq\frac{1}{4}F(t_{i+1}).$$
After $k$ iterations, using \eqref{lem2.2equ}, we have
$$F(t_{0})\leq(\frac{1}{4})^{k}F(t_{k})\leq C\varepsilon^{d-1}.$$
Here we take $\varepsilon^{d-1}$ to keep consistent with Case 2 below. Therefore, for some sufficiently small  $\varepsilon>0$,
\begin{equation*}
\int_{\widehat{\Omega}_{\delta}(z')}\left|\nabla{w}\right|^{2}dx\leq C\varepsilon^{d-1}.
\end{equation*}

{\bf Case 2.} For $z'\in\Sigma'_{\sqrt{\varepsilon}}\setminus\Sigma'_{-\sqrt{\varepsilon}}$, where $\Sigma'_{\sqrt{\varepsilon}}:=\{x'\in B'_{R}~| ~dist(x', \Sigma')<\sqrt{\varepsilon}\}$ and $0<s<\sqrt{\varepsilon}$, we have $\varepsilon\leq\delta(z')\leq C\varepsilon$.

By means of (\ref{w}) and (\ref{ff}), we have
\begin{align}
&\int_{\widehat{\Omega}_{s}(z')}|w|^2dx\leq C\varepsilon^2\int_{\widehat{\Omega}_{s}(z')}|\nabla w|^2dx,\label{w21}
\end{align}
and
\begin{align}
&\int_{\widehat{\Omega}_{s}(z')}|\mathcal{L}_{\lambda,\mu}\tilde{v}|^2dx\nonumber\\
&\leq C|\psi^{l}(z',\varepsilon+h_{1}(z'))|^2\frac{s^{d-1}}{\varepsilon^{2}}+C\|\nabla\psi^{l}\|_{L^\infty}^2 \frac{s^{d-1}}{\varepsilon}+C\varepsilon\,s^{d-1}\|\nabla^2\psi^{l}\|_{L^\infty}^2.\label{f21}
\end{align}

By (\ref{ww}), (\ref{w21}) and (\ref{f21}), for some universal constant $c_1>0$, we get for $0<t<s<\sqrt{\varepsilon}$,
\begin{align}\label{F}
F(t)\leq &\,\left(\frac{c_1\varepsilon}{s-t}\right)^2F(s)+C(s-t)^2s^{d-1} \cdot\nonumber\\
&\qquad\left(\frac{|\psi^{l}(z',\varepsilon+h_{1}(z'))|^2}{\varepsilon^2}+\frac{\|\nabla\psi^{l}\|_{L^\infty}^2}{\varepsilon}
+\varepsilon\|\nabla^2\psi^{l}\|_{L^\infty}^2\right).
\end{align}
Let  $t_i=\delta+2c_1i\varepsilon$, $i=0, 1, \cdots$ and $k=\left[\frac{1}{4c_1\sqrt{\varepsilon}}\right]+1$, then $$\frac{c_1\varepsilon}{t_{i+1}-t_i}=\frac{1}{2}.$$
Using (\ref{F}) with $s=t_{i+1}$ and $t=t_i$, we obtain
\begin{align*}
F(t_i)&\leq\frac{1}{4}F(t_{i+1})+C(i+1)^{d-1}\varepsilon^{d-1}\cdot\\
&\qquad\left(|\psi^{l}(z',\varepsilon+h_{1}(z'))|^2+\varepsilon(\|\nabla\psi^{l}\|_{L^\infty}^2+\|\nabla^2\psi^{l}\|_{L^\infty}^2)\right) , \quad i=0,1,2,\cdots, k.
\end{align*}
After $k$ iterations, making use of \eqref{lem2.2equ}, we have, for sufficiently small $\varepsilon$,
\begin{align*}
F(t_0)&\leq \big(\frac{1}{4}\big)^kF(t_k)+C\varepsilon^{d-1}\sum_{i=1}^k\big(\frac{1}{4}\big)^{i-1}(i+1)^{d-1}\cdot\\
&\qquad\left(|\psi^{l}(z',\varepsilon+h_{1}(z'))|^2+\varepsilon(\|\nabla\psi^{l}\|_{L^\infty}^2+\|\nabla^2\psi^{1}\|_{L^\infty}^2)\right)\\
&\leq \big(\frac{1}{4}\big)^kF(\sqrt{\varepsilon})+C\varepsilon^{d-1}\left(|\psi^{l}(z',\varepsilon+h_{1}(z'))|^2+\varepsilon(\|\nabla\psi^{l}\|_{L^\infty}^2+\|\nabla^2\psi^{l}\|_{L^\infty}^2)\right)\\
&\leq C\varepsilon^{d-1}\left(|\psi^{l}(z',\varepsilon+h_{1}(z'))|^2+\varepsilon\|\psi^{l}\|_{C^2(\partial{D}_{1})}^2\right),
\end{align*}
here we used the fact that $\big(\frac{1}{4}\big)^k\leq\big(\frac{1}{4}\big)^{\frac{1}{4c_{1}\sqrt{\varepsilon}}}\leq\,\varepsilon^{d-1}$ if $\varepsilon$ sufficiently small. This implies that
\begin{align*}
\|\nabla w\|_{L^2(\widehat{\Omega}_\delta(z'))}^{2}\leq  C\varepsilon^{d-1}\left(|\psi^{l}(z',\varepsilon+h_{1}(z'))|^2+\varepsilon\|\psi^{l}\|_{C^2(\partial{D}_{1})}^2\right).
\end{align*}

{\bf Case 3.} For $z'\in B'_{R}\setminus\Sigma'_{\sqrt{\varepsilon}}$, and $0<s<\frac{2}{3}d(z')$, we have $Cd^{2}(z')\leq\delta(z')\leq(C+1)d^{2}(z')$.

By using (\ref{w}) and (\ref{ff}) again, we have
\begin{align*}
&\int_{\widehat{\Omega}_{s}(z')}|w|^2dx\leq Cd^4(z')\int_{\widehat{\Omega}_{s}(z')}|\nabla w|^2dx,\\
&\int_{\widehat{\Omega}_{s}(z')}|\mathcal{L}_{\lambda,\mu}\tilde{v}|^2dx\leq C|\psi^{l}(z',\varepsilon+h_{1}(z'))|^2\frac{s^{d-1}}{d^4(z')}\\
& \hspace{4cm}+C\|\nabla\psi^{l}\|_{L^\infty}^2 \frac{s^{d-1}}{d^2(z')}+Cd^2(z')s^{d-1}\|\nabla^2\psi^{l}\|_{L^\infty}^2.
\end{align*}
Thus, for $0<t<s<\frac{2d(z')}{3}$,
 \begin{align}\label{F1}
F(t)\leq&\, \left(\frac{c_2d^2(z')}{s-t}\right)^2F(s)+C(s-t)^2s^{d-1}\cdot\nonumber\\
&\qquad\left(\frac{|\psi^{l}(z',\varepsilon+h_{1}(z'))|^2}{d^4(z')}+\frac{\|\nabla\psi^{l}\|_{L^\infty}^2}{d^2(z')}
+d^2(z')\|\nabla^2\psi^{l}\|_{L^\infty}^2\right),
\end{align}
where $c_{2}$ is another universal constant. Taking the same iteration procedure as in Case 1, setting  $t_i=\delta+2c_2id^2(z')$, $i=0, 1, \cdots$ and $k=\left[\frac{1}{4c_2d(z')}\right]+1$,
by (\ref{F1}) with $s=t_{i+1}$ and $t=t_i$, we have, for $i=0,1,2,\cdots, k$,
\begin{align*}
F(t_i)\leq&\,\frac{1}{4}F(t_{i+1})+C(i+1)^{d-1}d^{2(d-1)}(z')\cdot\\
&\qquad\left(|\psi^{l}(z',\varepsilon+h_{1}(z'))|^2+d^{2}(z')(\|\nabla\psi^{l}\|_{L^\infty}^2+\|\nabla^2\psi^{l}\|_{L^\infty}^2)\right).
\end{align*}
Similarly, after $k$ iterations, we have
\begin{align*}
F(t_0)\leq&\, \big(\frac{1}{4}\big)^kF(t_k)+C\sum_{i=1}^k\big(\frac{1}{4}\big)^{i-1}(i+1)^{d-1}d^{2(d-1)}(z')\cdot\\
&\qquad\left(|\psi^{l}(z',\varepsilon+h_{1}(z'))|^2+d^{2}(z')(\|\nabla\psi^{l}\|_{L^\infty}^2+\|\nabla^2\psi^{l}\|_{L^\infty}^2)\right)\\
\leq&\, \big(\frac{1}{4}\big)^kF(d(z'))\\
&+Cd^{2(d-1)}(z')\left(|\psi^{l}(z',\varepsilon+h_{1}(z'))|^2+d^{2}(z')(\|\nabla\psi^{l}\|_{L^\infty}^2+\|\nabla^2\psi^{1}\|_{L^\infty}^2)\right)\\
\leq&\, Cd^{2(d-1)}(z')\left(|\psi^{l}(z',\varepsilon+h_{1}(z'))|^2+d^{2}(z')\|\psi^{l}\|_{C^2(\partial{D}_{1})}^2\right),
\end{align*}
which implies that
\begin{align*}
\|\nabla w\|_{L^2(\widehat{\Omega}_\delta(z'))}^2
&\leq Cd^{2(d-1)}(z')\left(|\psi^{l}(z',\varepsilon+h_{1}(z'))|^2+d^{2}(z')\|\psi^{l}\|_{C^2(\partial{D}_{1})}^2\right).
\end{align*}
Therefore, \eqref{step2} is proved.

\noindent{\bf Step 3.}
Proof of that for $l=1,2,\cdots,d$,
\begin{align}\label{equa2.9v}
|\nabla w_l(x)|\leq \frac{C|\psi^{l}(x',\varepsilon+h_{1}(x'))|}{\sqrt{\delta(x')}}+C\|\psi^{l}\|_{C^2(\partial{D}_{1})},\quad\forall~x\in\Omega_{R}.
\end{align}

By the rescaling argument, Sobolev embedding theorem, $W^{2,p}$ estimate and bootstrap argument, the same as in \cite{bll1,bll2}, we have
\begin{align}\label{Linfty_estimate}
\|\nabla w\|_{L^\infty(\widehat{\Omega}_{\delta/2}(z))}\leq\frac{C}{\delta}\left(\delta^{1-\frac{d}{2}}\|\nabla w\|_{L^2(\widehat{\Omega}_\delta(z'))}+\delta^2\|\mathcal{L}_{\lambda,\mu}\tilde{v}\|_{L^\infty(\widehat{\Omega}_\delta(z'))}\right).
\end{align}
By  (\ref{f}) and (\ref{step2}), we have
\begin{align*}
\delta^{-\frac{d}{2}}\|\nabla w\|_{L^2(\widehat{\Omega}_\delta(z'))}
&\leq \frac{C}{\sqrt{\delta}}|\psi^{l}(z',\varepsilon+h_{1}(z'))|
+C\|\psi^{l}\|_{C^2(\partial D_1)},\end{align*}
and
\begin{align*}
\delta \|\mathcal{L}_{\lambda,\mu}\tilde{v}\|_{L^\infty(\widehat{\Omega}_\delta(z'))}\leq \frac{C}{\sqrt{\delta}}|\psi^{l}(z',\varepsilon+h_{1}(z'))|
+C(\|\nabla\psi^{l}\|_{L^\infty}+\|\nabla^2\psi^{l}\|_{L^\infty}).
\end{align*}
Plugging these estimates above into \eqref{Linfty_estimate} yields \eqref{equa2.9v}.

Consequently, by (\ref{eq1.7}), \eqref{eq1.7a} and \eqref{def_w}, we have for sufficiently small $\varepsilon$ and $x\in\Omega_{R}$,
\begin{align}\label{estimate_vl}
\frac{|\psi^{l}(x',\varepsilon+h_{1}(x'))|}{C(\varepsilon+d^2(x'))}&\leq|\nabla v_l(x',x_{d})|\leq \frac{C|\psi^{l}(x',\varepsilon+h_{1}(x'))|}{\varepsilon+d^2(x')}+C\|\psi^{l}\|_{C^2(\partial{D}_{1})}.
\end{align}

\noindent{\bf Step 4.} The completion of the proof of Theorem {thm2.1}.

By using \eqref{estimate_vl} and the decomposition of $\nabla{v}$, \eqref{equ_nablav},
\begin{align*}
|\nabla v(x)|&\leq\sum_{l=1}^{d}|\nabla{v}_{l}|\leq \frac{C|\psi(x',\varepsilon+h_{1}(x'))|}{\varepsilon+d^2(x')}+C\|\psi\|_{C^2(\partial D_1)},\quad\,x\in\Omega_R.
\end{align*}
Note that  for any $x\in\Omega\setminus\Omega_{R}$, by using the standard interior estimates and boundary estimates for elliptic systems \eqref{eq1.1} (see Agmon et al. \cite{AD1} and \cite{AD2}), we have
 $$\|\nabla v\|_{L^\infty(\Omega\setminus \Omega_{R})}\leq C\|\psi\|_{C^2(\partial D_1)}.$$  The proof of Theorem \ref{thm2.1} is completed.
\end{proof}

\subsection{Proof of theorem \ref{thm6.1}}\label{subsec_thm6.1}

The proof is similar to the Theorem \ref{thm2.1}, we only list the main differences. We define $\bar{v}$ by \eqref{vvd} as before.
By a direct calculation, we obtain that for $k, j=1,\cdots, d-1$, and $x\in\Omega_{R}$,
\begin{equation*}
|\partial_{x_{k}x_{j}}\bar{v}(x)|\leq\frac{C|x'|^{m-2}}{\varepsilon+|x'|^m}, \quad|\partial_{x_{k}x_{d}}\bar{v}(x)|\leq\frac{C|x'|^{m-1}}{(\varepsilon+|x'|^m)^2},\quad \partial_{x_{d}x_{d}}\bar{v}(x)=0.
\end{equation*}
Define $\tilde{v}_{l}$ by \eqref{equ_tildeu} as before, then we have
\begin{align}\label{meq1.7}
|\partial_{x_{k}}\tilde{v}_{l}(x)|\leq\frac{C|x'|^{m-1}|\psi^{l}(x',\varepsilon+h_{1}(x'))|}{\varepsilon+|x'|^m}+C\|\nabla\psi^{l}\|_{L^{\infty}},
\end{align}
\begin{align}\label{meq1.7a}
|\partial_{x_{d}}\tilde{v}_{l}(x)|\leq\frac{C|\psi^{l}(x',\varepsilon+h_{1}(x'))|}{\varepsilon+|x'|^m};
\end{align}
and
\begin{align*}
&|\partial_{x_{k}x_{j}}\tilde{v}_{l}(x)|\nonumber\\
&\leq\frac{C|x'|^{m-2}|\psi^{l}(x',\varepsilon+h_{1}(x'))|}{\varepsilon+|x'|^m}
+C\left(\frac{|x'|^{m-1}}{\varepsilon+|x'|^m}+1\right)\|\nabla\psi^{l}\|_{L^{\infty}}+C\|\nabla^{2}\psi^{l}\|_{L^{\infty}},\\
&|\partial_{x_{k}x_{d}}\tilde{v}_{l}(x)|
\leq\frac{C|x'|^{m-1}}{(\varepsilon+|x'|^m)^2}|\psi^{l}(x',\varepsilon+h_{1}(x'))|+\frac{C}{\varepsilon+|x'|^m}\|\nabla\psi^{l}\|_{L^{\infty}},\\
&\partial_{x_{d}x_{d}}\tilde{v}_{l}(x)=0.
\end{align*}
Therefore, for $x\in\Omega_{R}$, $l=1,2,\cdots,d$,
\begin{align}\label{mf}
|\mathcal{L}_{\lambda,\mu}\tilde{v}_{l}|\leq\, C|\nabla^2\tilde{v}_{l}|
\leq&\left(\frac{C|x'|^{m-2}}{\varepsilon+|x'|^m}+\frac{C|x'|^{m-1}}{(\varepsilon+|x'|^m)^2}\right)|\psi^{l}(x',\varepsilon+h_{1}(x'))|\nonumber\\
&+\frac{C}{\varepsilon+|x'|^m}\|\nabla\psi^{l}\|_{L^{\infty}}+C\|\nabla^{2}\psi^{l}\|_{L^{\infty}},
\end{align}

Similarly, let
\begin{equation}\label{mdef_w}
w_l:=v_l-\tilde{v}_l,\qquad l=1,2,\cdots,d.
\end{equation}
We only list the main differences.

\noindent{\bf Step 1. } Similar to \eqref{nablax'tildeu}, we have
\begin{align}\label{mnablax'tildeu}
&\int_{\Omega_{R}}|\nabla_{x'}\tilde{v}_l|^2dx\nonumber\\
&\leq C\int_{|x'|<R}(\varepsilon+|x'|^m)\left(\frac{|x'|^{2(m-1)}|\psi^{l}(x',\varepsilon+h_{1}(x'))|^{2}}{(\varepsilon+|x'|^m)^2}+\|\nabla\psi^{l}\|_{L^{\infty}}^{2}\right)dx'\nonumber\\
&\leq C\|\psi^{l}\|_{C^{1}(\partial{D}_{1})}^{2},
\end{align}
Similar to \eqref{lem2.2equ}, we can obtain
\begin{align}\label{mstep1}
\int_{\Omega}|\nabla w_l|^2dx\leq C\|\psi^{l}\|^{2}_{C^{2}(\partial{D}_{1})},\qquad\,l=1,2,\cdots,d.
\end{align}

\noindent{\bf Step 2. }Proof of
\begin{align}\label{mstep2}
 \int_{\widehat{\Omega}_\delta(z')}|\nabla w_l|^2dx
 &\leq C\delta^{d-1}\left(|\psi^{l}(z',\varepsilon+h_{1}(z'))|^2+\delta(\|\psi^{l}\|_{C^2(\partial D_1)}^2+1)\right).
\end{align}
For simplicity, we denote
$$w:=w_{l},\quad\mbox{and}\quad \tilde{v}:=\tilde{v}_{l}.$$
Similar to \eqref{w}, we have
\begin{align}\label{mw}
\int_{\widehat{\Omega}_{s}(z')}|w|^2dx&=\int_{\widehat{\Omega}_{s}(z')}\left|\int_{h_2(x')}^{x_{d}}\partial_{x_{d}}w(x', \xi)d\xi\right|^2dx\nonumber\\
&\leq \int_{\widehat{\Omega}_{s}(z')}(\varepsilon+h_1(x')-h(x'))\int_{h_2(x')}^{\varepsilon+h_1(x')}|\nabla w(x',\xi)|^2d\xi\,dx\nonumber\\
&\leq\,C(\varepsilon+(|z'|+s)^{m})^2\int_{\widehat{\Omega}_{s}(z')}|\nabla w|^2dx.
\end{align}
Similar to \eqref{ff}, we obtain
\begin{align}
&\int_{\widehat{\Omega}_{s}(z')}|\mathcal{L}_{\lambda,\mu}\tilde{v}|^2dx\nonumber\\
\leq&\, |\psi^{l}(z',\varepsilon+h_{1}(z'))|^2\int_{\widehat{\Omega}_{s}(z')}\left(\frac{C|x'|^{m-2}}{\varepsilon+|x'|^m}+\frac{C|x'|^{m-1}}{(\varepsilon+|x'|^m)^2}\right)^2dx\nonumber\\
&+\|\nabla\psi^{l}\|_{L^\infty}^2\int_{\widehat{\Omega}_{s}(z')}\left(\frac{C|x'|^{m-2}}{\varepsilon+|x'|^m}+\frac{C|x'|^{m-1}}{(\varepsilon+|x'|^m)^2}\right)^2|x'-z'|^2dx\nonumber\\
&+\|\nabla\psi^{l}\|_{L^\infty}^2\int_{\widehat{\Omega}_{s}(z')}\left(\frac{C}{\varepsilon+|x'|^m}\right)^2dx+C\delta(z')s^{d-1}\|\nabla^2\psi^{l}\|_{L^\infty}^2\nonumber\\
\leq&\, C|\psi^{l}(z',\varepsilon+h_{1}(z'))|^2 \int_{|x'-z'|<s}\left(\frac{|x'|^{2(m-2)}}{\varepsilon+|x'|^m}+\frac{|x'|^{2m-2}}{(\varepsilon+|x'|^m)^3}\right)dx'\nonumber\\
&+C\|\nabla\psi^{l}\|_{L^\infty}^2 \int_{|x'-z'|<s}\left(\frac{|x'|^{2(m-2)}s^2}{\varepsilon+|x'|^m}+\frac{|x'|^{2m-2}s^{2}}{(\varepsilon+|x'|^m)^3}+\frac{1}{\varepsilon+|x'|^m}\right)dx'\nonumber\\
&+C\delta(z')s^{d-1}\|\nabla^2\psi^{l}\|_{L^\infty}^2.\label{mff}
\end{align}
{\bf Case 1.} For $0\leq |z'| \leq \sqrt[m]{\varepsilon}$,  (i.e. $\varepsilon\leq\delta(z')\leq\,C\varepsilon$), and $0<t<s<\sqrt[m]{\varepsilon}$.

By means of (\ref{mw}) and (\ref{mff}), we have
\begin{align}
&\int_{\widehat{\Omega}_{s}(z')}|w|^2dx\leq C\varepsilon^2\int_{\widehat{\Omega}_{s}(z')}|\nabla w|^2dx,\label{mw21}
\end{align}
and
\begin{align}
&\int_{\widehat{\Omega}_{s}(z')}|\mathcal{L}_{\lambda,\mu}\tilde{v}|^2dx\nonumber\\
&\leq C|\psi^{l}(z',\varepsilon+h_{1}(z'))|^2\frac{\varepsilon^{1-\frac{2}{m}}}{\varepsilon^{2}}s^{d-1}+C\|\nabla\psi^{l}\|_{L^\infty}^2 \frac{s^{d-1}}{\varepsilon}+C\varepsilon\,s^{d-1}\|\nabla^2\psi^{l}\|_{L^\infty}^2.\label{mf21}
\end{align}

Denote $$F(t):=\int_{\widehat{\Omega}_t(z)}|\nabla w|^2dx.$$ By (\ref{ww}), (\ref{mw21}) and (\ref{mf21}), for some universal constant $c_1>0$, we get for $0<t<s<\sqrt[m]{\varepsilon}$,
\begin{align}\label{F2}
F(t)\leq &\,\left(\frac{c_1\varepsilon}{s-t}\right)^2F(s)+C(s-t)^2s^{d-1} \cdot\nonumber\\
&\qquad\left(\frac{\varepsilon^{1-\frac{2}{m}}|\psi^{l}(z',\varepsilon+h_{1}(z'))|^2}{\varepsilon^2}+\frac{\|\nabla\psi^{l}\|_{L^\infty}^2}{\varepsilon}
+\varepsilon\|\nabla^2\psi^{l}\|_{L^\infty}^2\right).
\end{align}
Let  $t_i=\delta+2c_1i\varepsilon$, $i=0, 1, \cdots$ and $k=\left[\frac{1}{4c_1\sqrt[m]{\varepsilon}}\right]+1$, then $$\frac{c_1\varepsilon}{t_{i+1}-t_i}=\frac{1}{2}.$$
Using (\ref{F2}) with $s=t_{i+1}$ and $t=t_i$, we obtain
\begin{align*}
F(t_i)&\leq\frac{1}{4}F(t_{i+1})+C(i+1)^{d-1}\varepsilon^{d-1}\cdot\\
&\qquad\left(|\psi^{l}(z',\varepsilon+h_{1}(z'))|^2\varepsilon^{1-\frac{2}{m}}+\varepsilon(\|\nabla\psi^{l}\|_{L^\infty}^2+\|\nabla^2\psi^{l}\|_{L^\infty}^2)\right) , \quad i=0,1,2,\cdots, k.
\end{align*}
After $k$ iterations, making use of \eqref{mstep1}, we have, for sufficiently small $\varepsilon$,
\begin{align*}
F(t_0)&\leq \big(\frac{1}{4}\big)^kF(t_k)+C\varepsilon^{d-1}\sum_{i=1}^k\big(\frac{1}{4}\big)^{i-1}(i+1)^{d-1}\cdot\\
&\qquad\left(|\psi^{l}(z',\varepsilon+h_{1}(z'))|^2\varepsilon^{1-\frac{2}{m}}+\varepsilon(\|\nabla\psi^{l}\|_{L^\infty}^2+\|\nabla^2\psi^{1}\|_{L^\infty}^2)\right)\\
&\leq \big(\frac{1}{4}\big)^kF(\sqrt[m]{\varepsilon})+C\varepsilon^{d-1}\left(|\psi^{l}(z',\varepsilon+h_{1}(z'))|^2\varepsilon^{1-\frac{2}{m}}+\varepsilon(\|\nabla\psi^{l}\|_{L^\infty}^2+\|\nabla^2\psi^{l}\|_{L^\infty}^2)\right)\\
&\leq C\varepsilon^{d-1}\left(|\psi^{l}(z',\varepsilon+h_{1}(z'))|^2\varepsilon^{1-\frac{2}{m}}+\varepsilon\|\psi^{l}\|_{C^2(\partial{D}_{1})}^2\right),
\end{align*}
here we used the fact that $\big(\frac{1}{4}\big)^k\leq\big(\frac{1}{4}\big)^{\frac{1}{4c_{1}\sqrt[m]{\varepsilon}}}\leq\,\varepsilon^{d-1}$ if $\varepsilon$ sufficiently small. This implies that for $0\leq |z'| \leq \sqrt[m]{\varepsilon}$,
\begin{align*}
\|\nabla w\|_{L^2(\widehat{\Omega}_\delta(z'))}^{2}\leq  C\varepsilon^{d-1}\left(|\psi^{l}(z',\varepsilon+h_{1}(z'))|^2+\varepsilon\|\psi^{l}\|_{C^2(\partial{D}_{1})}^2\right).
\end{align*}

{\bf Case 2.} For $\sqrt[m]{\varepsilon}\leq |z'|<R$,  (that is, $C|z'|^{m}\leq\delta(z')\leq(C+1)|z'|^{m}$), $0<t<s<\frac{2|z'|}{3}$.

By using (\ref{mw}) and (\ref{mff}) again, we have
$$\int_{\widehat{\Omega}_{s}(z')}|w|^2dx\leq C|z'|^{2m}\int_{\widehat{\Omega}_{s}(z')}|\nabla w|^2dx,
$$
and
\begin{align*}
&\int_{\widehat{\Omega}_{s}(z')}|\mathcal{L}_{\lambda,\mu}\tilde{v}|^2dx\\
& \leq C|\psi^{l}(z',\varepsilon+h_{1}(z'))|^2\frac{s^{d-1}}{|z'|^{m+2}}+C\|\nabla\psi^{l}\|_{L^\infty}^2 \frac{s^{d-1}}{|z'|^m}+C|z'|^{m}s^{d-1}\|\nabla^2\psi^{l}\|_{L^\infty}^2.
\end{align*}
Thus, for $0<t<s<\frac{2|z'|}{3}$,
 \begin{align}\label{F1}
F(t)\leq&\, \left(\frac{c_2|z'|^m}{s-t}\right)^2F(s)+C(s-t)^2s^{d-1}\cdot\nonumber\\
&\qquad\left(\frac{|\psi^{l}(z',\varepsilon+h_{1}(z'))|^2}{|z'|^{m+2}}+\frac{\|\nabla\psi^{l}\|_{L^\infty}^2}{|z'|^m}
+|z'|^{m}\|\nabla^2\psi^{l}\|_{L^\infty}^2\right),
\end{align}
where $c_{2}$ is another universal constant. Taking the same iteration procedure as in Case 1, setting  $t_i=\delta+2c_2i|z'|^m$, $i=0, 1, \cdots$ and $k=\left[\frac{1}{4c_2|z'|}\right]+1$,
by (\ref{F1}) with $s=t_{i+1}$ and $t=t_i$, we have, for $i=0,1,2,\cdots, k$,
\begin{align*}
F(t_i)\leq&\,\frac{1}{4}F(t_{i+1})+C(i+1)^{d-1}|z'|^{m(d-1)}\cdot\\
&\qquad\left(|z'|^{m-2}|\psi^{l}(z',\varepsilon+h_{1}(z'))|^2+|z'|^{m}(\|\nabla\psi^{l}\|_{L^\infty}^2+\|\nabla^2\psi^{l}\|_{L^\infty}^2)\right).
\end{align*}
Similarly, after $k$ iterations, we have
\begin{align*}
F(t_0)\leq&\, \big(\frac{1}{4}\big)^kF(t_k)+C\sum_{i=1}^k\big(\frac{1}{4}\big)^{i-1}(i+1)^{d-1}|z'|^{m(d-1)}\cdot\\
&\qquad\left(|z'|^{m-2}|\psi^{l}(z',\varepsilon+h_{1}(z'))|^2+|z'|^{m}(\|\nabla\psi^{l}\|_{L^\infty}^2+\|\nabla^2\psi^{l}\|_{L^\infty}^2)\right)\\
\leq&\, \big(\frac{1}{4}\big)^kF(|z'|)\\
&+C|z'|^{m(d-1)}\left(|z'|^{m-2}|\psi^{l}(z',\varepsilon+h_{1}(z'))|^2+|z'|^{m}(\|\nabla\psi^{l}\|_{L^\infty}^2+\|\nabla^2\psi^{1}\|_{L^\infty}^2)\right)\\
\leq&\, C|z'|^{m(d-1)}\left(|z'|^{m-2}|\psi^{l}(z',\varepsilon+h_{1}(z'))|^2+|z'|^{m}\|\psi^{l}\|_{C^2(\partial{D}_{1})}^2\right),
\end{align*}
which implies that, for $ \sqrt[m]{\varepsilon}\leq |z'|<R$,
\begin{align*}
\|\nabla w\|_{L^2(\widehat{\Omega}_\delta(z'))}^2
&\leq C|z'|^{m(d-1)}\left(|\psi^{l}(z',\varepsilon+h_{1}(z'))|^2+|z'|^{m}\|\psi^{l}\|_{C^2(\partial{D}_{1})}^2\right).
\end{align*}
Therefore, \eqref{mstep2} is proved.

\noindent{\bf Step 3. }
Proof of that for $l=1,2,\cdots,d$,
\begin{align}\label{mequa2.9}
|\nabla w_l(x)|\leq \frac{C|\psi^{l}(x',\varepsilon+h_{1}(x'))|}{\sqrt{\delta(x')}}+C\|\psi^{l}\|_{C^2(\partial{D}_{1})},\quad\forall~x\in\Omega_{R}.
\end{align}

Similar to \eqref{Linfty_estimate}, we have
\begin{align}\label{mLinfty_estimate}
\|\nabla w\|_{L^\infty(\widehat{\Omega}_{\delta/2}(z))}\leq\frac{C}{\delta}\left(\delta^{1-\frac{d}{2}}\|\nabla w\|_{L^2(\widehat{\Omega}_\delta(z'))}+\delta^2\|\mathcal{L}_{\lambda,\mu}\tilde{v}\|_{L^\infty(\widehat{\Omega}_\delta(z'))}\right).
\end{align}
By  (\ref{mf}) and (\ref{mstep2}), we have
\begin{align*}
\delta^{-\frac{d}{2}}\|\nabla w\|_{L^2(\widehat{\Omega}_\delta(z'))}
&\leq \frac{C}{\sqrt{\delta}}|\psi^{l}(z',\varepsilon+h_{1}(z'))|
+C\|\psi^{l}\|_{C^2(\partial D_1)},\end{align*}
and
\begin{align*}
\delta \|\mathcal{L}_{\lambda,\mu}\tilde{v}\|_{L^\infty(\widehat{\Omega}_\delta(z'))}\leq \frac{C}{\sqrt{\delta}}|\psi^{l}(z',\varepsilon+h_{1}(z'))|
+C\|\psi^{l}\|_{C^2(\partial D_1)}.
\end{align*}
Plugging these estimates above into \eqref{mLinfty_estimate} yields \eqref{mequa2.9}.

Consequently, by (\ref{meq1.7}), \eqref{meq1.7a} and \eqref{mdef_w}, we have for sufficiently small $\varepsilon$ and $x\in\Omega_{R}$,
\begin{align}\label{mestimate_vl}
|\nabla v_l(x',x_{d})|\leq \frac{C|\psi^{l}(x',\varepsilon+h_{1}(x'))|}{\varepsilon+|x'|^m}+C\|\psi^{l}\|_{C^2(\partial{D}_{1})}.
\end{align}

Now we prove Theorem \ref{thm6.1}. 
\begin{proof}[Proof of Theorem \ref{thm6.1}]
By using \eqref{mestimate_vl} and the decomposition of $\nabla{v}$, \eqref{equ_nablav},
\begin{align*}
|\nabla v(x)|&\leq\sum_{l=1}^{d}|\nabla{v}_{l}|\leq \frac{C|\psi(x',\varepsilon+h_{1}(x'))|}{\varepsilon+|x'|^m}+C\|\psi\|_{C^2(\partial D_1)},\quad\,x\in\Omega_R.
\end{align*}
Note that  for any $x\in\Omega\setminus\Omega_{R}$, by using the standard interior estimates and boundary estimates for elliptic systems \eqref{eq1.1} (see Agmon et al. \cite{AD1,AD2}), we have
 $$\|\nabla v\|_{L^\infty(\Omega\setminus \Omega_{R})}\leq C\|\psi\|_{C^2(\partial D_1)}.$$  The proof of Theorem \ref{thm6.1} is completed.
\end{proof}

\noindent{\bf{\large Acknowledgements.}} H.G. Li would like to thank Professor YanYan Li for his encouragements and constant supports. H.J. Ju was partially supported by NSFC (11471050). H.G. Li was partially supported by NSFC (11571042, 11631002), Fok Ying Tung Education Foundation (151003).

\noindent{\bf{\large Conflict of interest}} The authors declare that they have no conflict of interest.

\end{document}